\documentclass{tran-l}
\usepackage[all,2cell]{xy}\UseAllTwocells
\usepackage{graphicx}
\usepackage{epstopdf}
\usepackage{pb-diagram}
\DeclareGraphicsRule{.tif}{png}{.png}{`convert #1 `dirname #1`/`basename #1 .tif`.png}

\usepackage{amssymb, amscd}
\usepackage{amsmath}
\usepackage{amsfonts}
\usepackage{graphics}
 \xyoption{arc}

\newtheorem{theorem}{Theorem}[section]
\newtheorem{lemma}[theorem]{Lemma}

\newtheorem{prop}[theorem]{Proposition}
\newtheorem{corollary}[theorem]{Corollary}

\theoremstyle{definition}
\newtheorem{definition}[theorem]{Definition}
\newtheorem{example}[theorem]{Example}
\newtheorem{examples}[theorem]{Examples}

\theoremstyle{remark}
\newtheorem{remark}[theorem]{Remark}

\numberwithin{equation}{section}

\def\st{\stackrel}

\def\r{\rightarrow} 

\def\rr{\Rightarrow} 

\makeatletter
\def\Ddots{\mathinner{\mkern1mu\raise\p@
\vbox{\kern7\p@\hbox{.}}\mkern2mu
\raise4\p@\hbox{.}\mkern2mu\raise7\p@\hbox{.}\mkern1mu}}
\makeatother

\newcommand{\F}{{\mathcal F}}
\newcommand{\G}{\mathcal G}
\newcommand{\cL}{\mathcal L}
\newcommand{\Hol}{\mbox{\rm Hol}}

\newcommand{\id}{{\scriptstyle id}}
\newcommand{\Pair}{\mbox{\rm Pair}}
\newcommand{\N}{{\bf N}}

\newcommand{\cA}{{\mathcal A}}
\newcommand{\cB}{{\mathcal B}}
\newcommand{\cC}{{\mathcal C}}
\newcommand{\cD}{{\mathcal D}}
\newcommand{\cE}{{\mathcal E}}

\newcommand{\cH}{{\mathcal H}}
\newcommand{\cJ}{{\mathcal J}}
\newcommand{\cK}{{\mathcal K}}
\newcommand{\I}{{\mathcal I}}
\newcommand{\cN}{{\mathcal N}}
\newcommand{\M}{{\mathcal M}}
\newcommand{\cO}{{\mathcal O}}
\newcommand{\cP}{{\mathcal P}}

\newcommand{\cU}{{\mathcal U}}
\newcommand{\V}{{\mathcal V}}
\newcommand{\U}{{\mathcal U}}
\newcommand{\cW}{{\mathcal W}}
\newcommand{\X}{{\mathcal X}}
\newcommand{\Y}{{\mathcal Y}}

\newcommand{\BB}{{\mathfrak B}}

\newcommand{\DD}{{\mathfrak D}}

\newcommand{\HH}{{\mathfrak H}}
\newcommand{\R}{{\mathbb{R}}}
\newcommand{\Z}{{\mathbb{Z}}}

\newcommand{\cat}{\mbox{\rm cat}}

\newcommand{\simM}{~ {\sim}_{M} ~}
\newcommand{\simT}{~ {\sim}_{T} ~}

\newcommand{\al}{\alpha}
\newcommand{\e}{\epsilon}

\newcommand{\ri}{\Rightarrow}

\newcommand{\po}{\bullet^K}

\newcommand{\globe}{
\[
 \xy
  (-15,0)*{\bullet}="1";
  (0,0)*{\bullet}="2";
 "1";"2" **\crv{(-12,9) & (-3,9)};
  "1";"2" **\crv{(-12,-9) & (-3,-9)};
    (-7.5,6.75)*{\scriptstyle >}+(0,3)*{\scriptstyle \phi};
    (-7.5,-6.75)*{\scriptstyle >}+(0,-3)*{\scriptstyle \psi};
    (-17.5,0)*{\scriptstyle \cK};
  (2.5,0)*{\scriptstyle \G};
  {\ar@{=>}_{\scriptstyle a}(-7.5,3)*{};(-7.5,-3)*{}} ;
 \endxy 
 \]
}
\newcommand{\vglobe}{
 \[
 \xy
  (-8,0)*{\bullet}="1";
  (8,0)*{\bullet}="2";
 "1";"2" **\crv{(-5,9) & (5,9)};
  "1";"2" **\crv{(-5,-9) & (5,-9)};
   "1";"2" **\dir{-} ?(.55)*\dir{>};
    (,6.75)*{\scriptstyle >}+(0,3)*{\scriptstyle \phi};
    (0,-6.75)*{\scriptstyle >}+(0,-3)*{\scriptstyle \psi};
    (-10.5,0)*{\scriptstyle \cK};
  (10.5,0)*{\scriptstyle \G};
  {\ar@{=>}_{b}(0,5)*{};(0,1.5)*{}} ;
  {\ar@{=>}_{a}(0,-1.5)*{};(0,-5)*{}} ;
 \endxy.
 \]
}
\newcommand{\hglobe}{
 \[
 \xy
  (-15,0)*{\bullet}="1";
  (0,0)*{\bullet}="2";
  (15,0)*{\bullet}="3";
 "1";"2" **\crv{(-12,9) & (-3,9)};
  "1";"2" **\crv{(-12,-9) & (-3,-9)};
   "2";"3" **\crv{(3,9) & (12,9)} ;
    "2";"3" **\crv{ (3,-9)& (12,-9) };
    (-7.5,6.75)*{\scriptstyle >}+(0,3)*{\scriptstyle \varphi};
    (7.5,6.75)*{\scriptstyle >}+(0,3)*{\scriptstyle \phi};
    (-7.5,-6.75)*{\scriptstyle >}+(0,-3)*{\scriptstyle \varphi'};
    (7.5,-6.75)*{\scriptstyle >}+(0,-3)*{\scriptstyle \phi'};
    (-17.5,0)*{\scriptstyle \cL};
  (17.5,0)*{\scriptstyle \G};
  {\ar@{=>}_{a}(7.5,3)*{};(7.5,-3)*{}} ;
  {\ar@{=>}_{b}(-7.5,3)*{};(-7.5,-3)*{}} ;
   (-18,0)*{};
  (18,0)*{};
 \endxy .
 \]
}
\newcommand{\il}{
\[
\xy
  (-8,0)*{\bullet}="1";
  (8,0)*{\bullet}="2";
 "1";"2" **\crv{(-5,9) & (5,9)};
  "1";"2" **\crv{(-5,-9) & (5,-9)};
   "1";"2" **\dir{-} ?(.55)*\dir{>};
    (,6.75)*{\scriptstyle >}+(0,3)*{\scriptstyle \varphi};
    (0,-6.75)*{\scriptstyle >}+(0,-3)*{\scriptstyle \varphi''};
    (-10.5,0)*{\scriptstyle \cL};
   {\ar@{=>}_{d}(0,5)*{};(0,1.5)*{}} ;
  {\ar@{=>}_{c}(0,-1.5)*{};(0,-5)*{}} ;
     (8,0)*{\bullet}="1";
  (24,0)*{\bullet}="2";
 "1";"2" **\crv{(11,9) & (21,9)};
  "1";"2" **\crv{(11,-9) & (21,-9)};
   "1";"2" **\dir{-} ?(.55)*\dir{>};
    (16,6.75)*{\scriptstyle >}+(0,3)*{\scriptstyle \phi};
    (16,-6.75)*{\scriptstyle >}+(0,-3)*{\scriptstyle \phi''};
  (26.5,0)*{\scriptstyle \G};
  {\ar@{=>}_{b}(16,5)*{};(16,1.5)*{}} ;
  {\ar@{=>}_{a}(16,-1.5)*{};(16,-5)*{}} ;
 \endxy .
 \]
}

\begin{document}

\title{The Lusternik-Schnirelmann category of a Lie groupoid}
\author{Hellen Colman}
\address{Department of Mathematics \\Wilbur Wright College \\\newline 4300 N. Narragansett Avenue \\Chicago, IL 60634\\
   USA} 
  \email{hcolman@ccc.edu}
\urladdr{http://faculty.ccc.edu/hcolman/}

\subjclass[2000]{22A22, 55M30, 18D05}

\begin{abstract}
We propose a new homotopy invariant for Lie groupoids which generalizes the classical Lusternik-Schnirelmann category for topological spaces. We use a bicategorical approach to develop a notion of contraction in this context. We propose a notion of homotopy between generalized maps given by the 2-arrows in a certain bicategory of fractions. This notion is invariant under Morita equivalence. Thus, when the groupoid defines an orbifold, we have a well defined LS-category for orbifolds. We prove an orbifold version of the classical Lusternik-Schnirelmann theorem for critical points.
\end{abstract}    

\maketitle

\section{Introduction}
The LS-category of a topological space is an invariant of the  homotopy type of the space introduced in the early 1930's by Lusternik and Schnirelmann  \cite{Lu}. It is a numerical invariant that measures the complexity of the space, in particular if $M$ is a compact manifold the LS-category of $M$ provides a lower bound for the number of critical points of any smooth function on the manifold.
The LS-category of a space $X$ is defined as the least number of open subsets, contractible in $X$, required to cover $X$. For a survey in LS-category see \cite{Ja,Op}.

There are many generalizations of the original concept adapted to various 
contexts such as the {\it fibrewise category}, introduced by I.M.~James and J.R.~Morris \cite{JaMo},  the {\it equivariant 
category}, by E.~Fadell \cite{Fa} and the {\it transverse} and {\it tangential} categories for foliated manifolds \cite{tran,tan,survey}.

Our aim is to develop a Lusternik-Schnirelmann theory in the context of Lie groupoids. Our main application will be in the setting of orbifolds as groupoids.

Since an orbifold is defined as a Morita equivalence class of certain type of groupoids \cite{MP,M} our notion of LS-category for Lie groupoids ought to be invariant under Morita equivalence. In this spirit, the right notion of morphism between Lie groupoids is that of a generalized map \cite{MP,M}. We develop a notion of homotopy between generalized maps. We start with a notion of strong homotopy between strict morphisms which is not Morita invariant, and introduce the notion of {\it essential homotopy equivalence} that simultaneously weakens a strict homotopy and generalizes an essential equivalence. We will say that two Lie groupoids $\cK$ and $\G$ have the same {\it Morita homotopy type} if there exists a third groupoid $\cJ$ and essential homotopy equivalences
$$\cK\overset{\nu}{\gets}\cJ\overset{\eta}{\to}\G.$$
We prove that the bicategory $\HH$ of Lie groupoids, strict morphisms and strict homotopies admits a bicalculus of fractions that formally inverts the essential homotopy equivalences $W$. Using the techniques developed in \cite{P} we obtain a new bicategory of Lie groupoids $\HH(W^{-1})$ where the essential homotopy equivalences were formally inverted. Morita homotopy equivalences introduced above amount to {\it isomorphisms} in this bicategory.

Our notion of {\it Morita homotopy} corresponds to the 2-arrows in the bicategory $\HH(W^{-1})$. This deformation within the groupoid is closely related to the notion of $\G$-path developed by Haefliger \cite{H2,H3}. We propose a modified version of a $\G$-path (between two objects)  that we call a {\it multiple $\G$-path} (between two orbits). A Morita homotopy determines multiple $\G$-paths between the orbits. A multiple $\G$-path is defined as a generalized map $\I\overset{\e}{\gets}\I'\overset{\sigma}{\to}\G$ where $\I$ and $\I'$ are certain groupoids associated to the interval $[0,1]$.
Multiple $\G$-paths will play a key role in defining the integral curves of a gradient vector field in a Lie groupoid.

Based on the Morita homotopy notion, we define a $\G$-categorical subgroupoid in the sense of Lusternik and Schnirelmann. A subgroupoid $\U$ is $\G$-categorical if it can be deformed by a Morita  homotopy into a transitive groupoid. The {\it groupoid LS-category of $\G$}, $\cat \G$, is the minimal number of $\G$-categorical subgroupoids required to cover $\G$. When $\G$ is the unit groupoid over a manifold $M$, this number  specializes to the classical LS-category of the manifold $M$.

We prove that $\cat \G$ is an invariant of the Morita homotopy type. Therefore, $\cat \G$ is invariant under Morita equivalence and yields a well defined invariant for orbifolds. Let $\G$ be an orbifold groupoid defining the orbifold $\X$. The orbifold LS-category of $\X$ is defined as the groupoid LS-category of $\G$, $\cat \X=\cat \G$. 

The orbifold LS-category is a homotopy invariant that detects part of the complexity of the orbifold. It detects the existence of obstructions given by the twisted sectors to contract an open set, but somehow misses the ``weight'' of such obstructions.

We introduce a variant of the orbifold LS-category that we call the {\it weighted orbifold LS-category}, $w\cat \X$. This numerical invariant also generalizes the classical LS-category for manifolds, if the orbifold is a manifold $X$ then $w\cat X=\cat X$.  We prove that the weighted orbifold category is an invariant of orbifold homotopy type which relates to the orbifold category of the inertia orbifold.

Finally we give a version of the Lusternik-Schnirelmann theorem for Lie groupoids. If $\G\overset{\e}{\gets}\G'\overset{\phi}{\to}\R$ is a generalized map satisfying certain {\it deformation conditions} (D) then $$\cat\G\le \sum_{c\in \R} \cat \cK_c$$ where $\cK_c$ is the full subgroupoid over the critical points at the level $c$.

We show that if $\G$ is an {\it orbifold groupoid} defining a compact orbifold, then the deformation conditions (D) are satisfied and we have that  the LS-theorem holds for orbifolds: If $\X$ is a compact orbifold and $f\colon \X\to \R$ is an orbifold map, then $f$ has at least $\cat\X$ critical points.

When counting a finite number of critical points in an orbifold, we face the dilemma of how to count. Let $\cK$ be the critical groupoid. If $\cK$ is a finite groupoid, we can chose one isolated critical point $x_i$ in each orbit and we have that $\cK$ is equivalent to a groupoid whose set of objects is $\{x_1, \cdots, x_n\}$ and the set of arrows is the disjoint union of the isotropy groups $K_1\sqcup \cdots \sqcup K_n$  where the order of $K_i$ is $k_i$ and the number of conjugacy classes in $K_i$ is $k_i'$ (then $k_i=k'_i$ if $K_i$ is abelian). By the one hand we have that the cardinality of a groupoid of Baez and Dolan \cite{Ba} and the Euler characteristic of a finite category of Leinster \cite{Tom}  both give as a cardinality of $\cK$ the rational number $\displaystyle {1\over {k_1}}+\cdots+{1\over {k_n}}$. On the other hand the string theoretic Euler characteristic \cite{Wi} gives as a cardinality of $\cK$ the integer number $\displaystyle { {k'_1}}+\cdots+{{k'_n}}$. Our weighted orbifold category is a lower bound for this latter cardinality. 

This  paper is organized as follows. Section 2 presents the notions of strong and Morita equivalence for strict morphisms between Lie groupoids. We describe in this section some examples that will be studied further throughout the paper. Section 3 is where we develop the notion of strict homotopy associated to a subdivision of the interval $I=[0,1]$. We recall the notion of $\G$-path and introduce the multiple $\G$-paths. We introduce the notion of essential homotopy equivalence and homotopy pullback of groupoids. Section 4 is where we show that the bicategory of Lie groupoids, strict morphisms and strict homotopies constructed in section 3 admits a bicalculus of fractions that inverts the essential homotopy equivalences. We define our notion of Morita homotopy as the 2-arrows in this bicategory of fractions. In section 5 we propose a notion of LS-category for Lie groupoids. We say that a subgroupoid is $\G$-categorical, in the sense of Lusternik and Schnirelmann, if there is a Morita homotopy between the inclusion and a morphism with image in a single orbit. We define the LS-category for Lie groupoids and prove that it is invariant under Morita homotopy. In particular, it is invariant under Morita equivalence and yields a well defined number for orbifolds. In section 6 we treat in detail this LS theory for orbifolds. We describe the obstructions to deform an open set in terms of the orbifold structure of the twisted sectors and give estimates for the orbifold LS-category in terms of the categories of the twisted and untwisted sectors. We  also propose  the notion of weighted orbifold LS-category and prove that it is invariant under Morita homotopy equivalence. Section 7 is where we give the preliminaries and prove the Lusternik-Schnirelmann theorem for orbifolds. Given a generalized map  $\G\overset{\e}{\gets}\G'\overset{\phi}{\to}\R$ we introduce a groupoid version of the classical deformation conditions. When the groupoid defines an orbifold, we can define the $\G$-flow of the gradient of $(\e, \phi)$ and its integral curves will be given by multiple $\G$-paths. These integral $\G$-curves provide the tool to prove that the deformation conditions are satisfied for  smooth maps on orbifold groupoids.  We show that both $\cat \G$ and $w\cat \G$ satisfy some monotonicity, additivity, deformation invariance and continuity properties required to prove the theorem. 

\subsection*{Acknowledgements}
I am grateful to Andr\'e Haefliger, for suggesting to me the idea of developing a Lusternik-Schnirelmann theory for orbifolds. My thanks also to John Oprea and Dorette Pronk for their valuable comments and help.

\section{Recollections on Lie groupoids and equivalences}

\subsection{Lie groupoids}
Recall that a  {\it groupoid} $\G$ is a small category in which each arrow is invertible \cite{McL} \cite{Brown}. 
Our notation for groupoids is that
$G_0$ is the space of objects and $G_1$ is the space of objects, with source and target maps
$s,t:G_1\to G_0$, multiplication $m:G_1 \times_{G_0} G_1\to G_1$, inversion $i:G_1\to G_1$, and object inclusion
$u:G_0\hookrightarrow G_1$.

A Lie groupoid is a groupoid for which $G_1$ and $G_0$ are manifolds,
$s$ and $t$ are surjective submersions, and $m$, $i$ and $u$ are smooth \cite{Mck}.

The set of arrows from $x$ to $y$ is denoted $G(x,y)=\{ g\in G_1 | s(g)=x \mbox{ and } t(g)=y\}$. The set of arrows from $x$ to itself, $G(x,x)$, is a group called the {\it isotropy} group of $G$ at $x$ and denoted by $G_{x}$. The {\it orbit} of $x$ is the set ${\mathcal{O}}(x)=ts^{{-1}}(x)$. The orbit space $|\G|$ of $\G$ is the quotient of $G_{0}$ under the equivalence relation: $x\sim y$ iff $x$ and $y$ are in the same orbit.

We will describe now various Lie groupoids that will appear throughout the paper.
\begin{enumerate}
\item
{\it Unit groupoid.} 
Consider the groupoid $\G$ associated to a manifold $M$ with $G_0=G_1=M$. This is a Lie groupoid whose arrows are all units, called the unit groupoid and denoted $\G=u(M)$.
\item
{\it Pair groupoid.} Let $\G$ with $G_0=M$ as before and consider $G_1=M\times M$. This is a Lie groupoid with exactly one arrow from any object $x$ to any object $y$, called the pair groupoid and denoted $\G=\Pair(M)$. 
\item
{\it Point groupoid.} Let $G$ be a Lie group. Let $\bullet$ be a point. Consider the groupoid $\G$ with $G_0=\bullet$ and $G_1=G$. This is a Lie groupoid with exactly one object $\bullet$ and $G$ is the manifold of arrows in which the maps $s$ and $t$ coincide. We denote the point groupoid by $\bullet^G$.
\item
{\it Translation groupoid.} Let $K$ be a Lie group acting (on the left) on  a manifold $M$. Consider the groupoid $\G$ with $G_0=M$ and $G_1=K\times M$. This is a Lie groupoid with arrows $(k, x)$  from any object $x$ to $y=kx$, called the translation or action groupoid and denoted $\G=K\ltimes M$. 
\item
{\it Holonomy groupoid.} Let $(M,\F)$ be a foliated manifold. Consider the groupoid $\G$ with $G_0=M$ and whose arrows from $x$ to $y$ on the same leaf $L\in\F$ are the holonomy classes of paths in $L$ from $x$ to $y$. There are not arrows between points in different leaves. This is a Lie groupoid called the holonomy groupoid and denoted $\G=\Hol(M,\F)$.
\end{enumerate}
We will denote ${\bf{1}}$ the trivial groupoid with one object and one arrow, ${\bf{1}}=u(\bullet)=\Pair(\bullet)=\bullet^{\bullet}$.

\subsection{Strong and Morita equivalence}
From now on all groupoids will be assumed to be Lie groupoids.

A {\it morphism} $\phi: \cK \to \G$ of groupoids is a functor given by two smooth maps $\phi: K_1 \to G_1$ and $\phi: K_0 \to G_0$ that together commute with all the structure maps of the groupoids $\cK$ and $\G$.

A {\it natural transformation} $T$ between two morphisms $\phi, \psi: \cK \to \G$ is a smooth map $T: K_{0} \to G_1$ with $T(x):\phi(x)\to\psi(x)$ such that for any arrow $h:x \rightarrow y$ in $K_1$,  the identity $\psi(h)T(x)=T(y)\phi(h)$ holds. We write $\phi\simT\psi$.

A morphism $\phi: \cK \to \G$ of groupoids is an {\it equivalence} of groupoids if there exists a morphism $\psi: \G \to \cK$ of groupoids and natural transformations $T$ and $T'$ such that $\psi\phi\sim_{T} \id_{\cK}$ and $\phi\psi\sim_{T'} \id_{\G}$. Sometimes we will refer to this notion of equivalence as a ${\it strong}$ equivalence.

A morphism $\e: \cK \to \G$ of groupoids is an {\it essential equivalence} of groupoids if
\begin{itemize}
\item[(i)] $\e$ is essentially surjective in the sense that \[t\pi_{1}:G_1\times_{G_0}K_0\rightarrow G_0\] is a surjective submersion
where $G_1\times_{G_0}K_0$ is the pullback along the source $s: G_1\to G_0$;
\item[(ii)] $\e$ is fully faithful in the sense that $K_1$ is given by the following pullback of manifolds:
\[\xymatrix{
K_1 \ar[r]^{\e} \ar[d]_{(s,t)}& G_1 \ar[d]^{(s,t)} \\ 
K_0\times K_0 \ar[r]^{\e \times \e} & G_0\times G_0}\]
\end{itemize}
The first condition implies that for any object $y\in G_0$, there exists an object  $x\in K_0$ whose image $\e(x)$ can be connected to $y$ by an arrow $g\in G_1$. The second condition implies that for all $x,z\in K_0$, $\e$ induces a diffeomorphism $K(x,z)\approx G(\e(x),\e(z))$ between the submanifolds of arrows.

For general categories the notions of equivalence and essential equivalence coincide. This applies to the particular case in which the categories are groupoids. But when some extra structure is involved (continuity or differentiability) these two notions are not the same anymore. An essential equivalence implies the existence of the inverse functor using the axiom of choice but not the existence of a {\it smooth} functor. 
\begin{prop}\cite{MM}
Every  equivalence of Lie groupoids is an essential equivalence.
\end{prop}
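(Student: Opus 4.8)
\textit{Proof proposal.} The plan is to unwind the definition of a strong equivalence and to verify conditions (i) and (ii) by hand. Write $\phi\colon\cK\to\G$ for the equivalence; by hypothesis there is a smooth morphism $\psi\colon\G\to\cK$ and natural transformations given by smooth maps $T\colon K_0\to K_1$, $T(x)\colon\psi\phi(x)\to x$, and $T'\colon G_0\to G_1$, $T'(y)\colon\phi\psi(y)\to y$, with $\psi\phi\simT\id_{\cK}$ and $\phi\psi\sim_{T'}\id_{\G}$. As a harmless preliminary I would replace $T'$ by the standard modification making $(\phi,\psi,T,T')$ an adjoint equivalence; the new $T'$ is a composite of $T,T'$ and their images under $\phi$ and $\psi$, hence still smooth, and afterwards the triangle identity $\phi(T(x))=T'(\phi(x))$ holds.

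\textit{Condition (i).} The map $\sigma\colon G_0\to G_1\times_{G_0}K_0$, $\sigma(y)=(T'(y),\psi(y))$, is well defined because $s(T'(y))=\phi\psi(y)$, is smooth, and satisfies $t\pi_1\circ\sigma=\id_{G_0}$ because $t(T'(y))=y$; hence $t\pi_1$ is surjective. The substantive point is that $t\pi_1$ is a \emph{submersion}, and I would prove this by producing a smooth local section of $t\pi_1$ through an arbitrary point $(g_0,x_0)$ of the pullback. Put $y_0=t(g_0)$. Since the structure map $t\colon K_1\to K_0$ is a submersion, $\{(y,k)\in G_0\times K_1:\psi(y)=t(k)\}$ is a manifold that projects submersively to $G_0$, so there is a smooth family $y\mapsto k(y)$ of arrows of $\cK$, defined near $y_0$, with $t(k(y))=\psi(y)$ and $k(y_0)=\psi(g_0)\,T(x_0)^{-1}$ (an arrow $x_0\to\psi(y_0)$). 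Then $\tau(y):=\bigl(T'(y)\,\phi(k(y)),\,s(k(y))\bigr)$ lies in $G_1\times_{G_0}K_0$, satisfies $t\pi_1(\tau(y))=y$, and $\tau(y_0)=(g_0,x_0)$, the last equality using naturality of $T'$ together with the triangle identity. Since a smooth map that admits a smooth section through each point of its domain is a submersion, $t\pi_1$ is a surjective submersion.

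\textit{Condition (ii).} Forgetting the smooth structures, $\phi$ is an equivalence of categories, hence fully faithful: $\phi\colon K(x,z)\to G(\phi(x),\phi(z))$ is a bijection for all $x,z\in K_0$. Equivalently, the smooth map $(s,t,\phi)\colon K_1\to K_0\times_{G_0}G_1\times_{G_0}K_0$ is a bijection, the codomain being a manifold precisely because of the submersion established in (i). To see it is a diffeomorphism I would exhibit the smooth inverse $(x,g,z)\mapsto T(z)\,\psi(g)\,T(x)^{-1}$ (with $g\colon\phi(x)\to\phi(z)$): it is manifestly smooth, is an arrow $x\to z$ in $\cK$, is carried to $g$ by $\phi$ (naturality of $T'$ and the triangle identity), and composes with $(s,t,\phi)$ to the identity of $K_1$ (naturality of $T$). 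Hence $(s,t,\phi)$ is a diffeomorphism, which is exactly the content of (ii).

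\textit{Main obstacle.} The single non-formal step is the submersivity (not just surjectivity) of $t\pi_1$ in (i); all the rest is bookkeeping with the $2$-cells. The difficulty is genuine, since the object component $\phi\colon K_0\to G_0$ of a strong equivalence need not be a submersion — for instance the inclusion of the trivial groupoid into $\Pair(M)$, $M$ connected, is a strong equivalence — so one cannot simply differentiate $\sigma$ and push it around. The way out is to use submersivity of the structure maps of $\cK$ and $\G$ to propagate local sections of $t\pi_1$; keeping the composites with the natural transformations consistent, or equivalently passing first to an adjoint equivalence, is where the care is needed.
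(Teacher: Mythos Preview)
The paper does not prove this proposition; it merely states it with a citation to \cite{MM} (Moerdijk--Mr\v{c}un). So there is no in-paper argument to compare against.

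Your proof is correct and is essentially the standard argument one finds in the cited reference. The passage to an adjoint equivalence is a clean way to make the verification of $\tau(y_0)=(g_0,x_0)$ go through, and your observation that condition~(i) is what guarantees the fibred product $K_0\times_{G_0}G_1\times_{G_0}K_0$ is a manifold (so that condition~(ii) even makes sense as a statement about diffeomorphisms) is exactly right. The identification of the submersivity of $t\pi_1$ as the only non-formal step, and the use of local sections built from the submersion $t\colon K_1\to K_0$ pulled back along $\psi$, is the expected line of attack and is carried out accurately.
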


\begin{remark}
The converse does not hold for Lie groupoids. 
\end{remark}

{\it Morita equivalence} is the smallest equivalence relation between Lie groupoids such that they are equivalent whenever there exists an essential equivalence between them.
\begin{definition}
Two Lie groupoids $\cK$ and $\G$ are {\it Morita equivalent} if there exists a Lie groupoid $\cJ$ and essential equivalences
\[\cK\overset{\e}{\gets}\cJ\overset{\sigma}{\to}\G.\]
\end{definition}
This defines an equivalence relation that we denote $\cK\simM \G$. In this case, it is always possible to chose the equivalences $\e$ and $\sigma$ being surjective submersions on objects \cite{MM}.
\begin{example} \label{tear} Consider the Lie groupoid $\G$ whose manifold of objects is the disjoint union of two open disks, $G_0=D_1\sqcup D_2$ as shown in figure \ref{t}. The disk $D_1$ is acted on by $\Z_3=\{1, \rho, \rho^2\}$. The manifold of arrows is the disjoint union of three copies of $D_1$, two annuli $A$ and $A'$ and a copy of $D_2$, $G_1=\displaystyle{\left(\bigsqcup_{i=1}^3 D_1^i\right)} \sqcup A\sqcup A'\sqcup D_2$. The source map $s\colon G_1\to G_0$ is given by the projection $ D_1^i\to D_1\subset G_0$,  the inclusions 
$ A\to D_1\subset G_0$ and $ D_2\to D_2\subset G_0$ and on the other annulus $A'$ is given by the triple covering $A'\to A_2\subset D_2\subset G_0$. Target map $t\colon G_1\to G_0$ coincides with source map on $D_2$ whilst  in $\displaystyle{\left(\bigsqcup_{i=1}^3 D_1^i\right)}$ it is given by the action of $\Z_3$, $\rho^i\colon D_1^i\to D_1\subset G_0$ and in the annuli $A$ and $A'$ is given by the triple covering $A\to A_2\subset D_2\subset G_0$ and the inclusion $ A\to D_1\subset G_0$ respectively.

\begin{figure}
\includegraphics[height=3in]{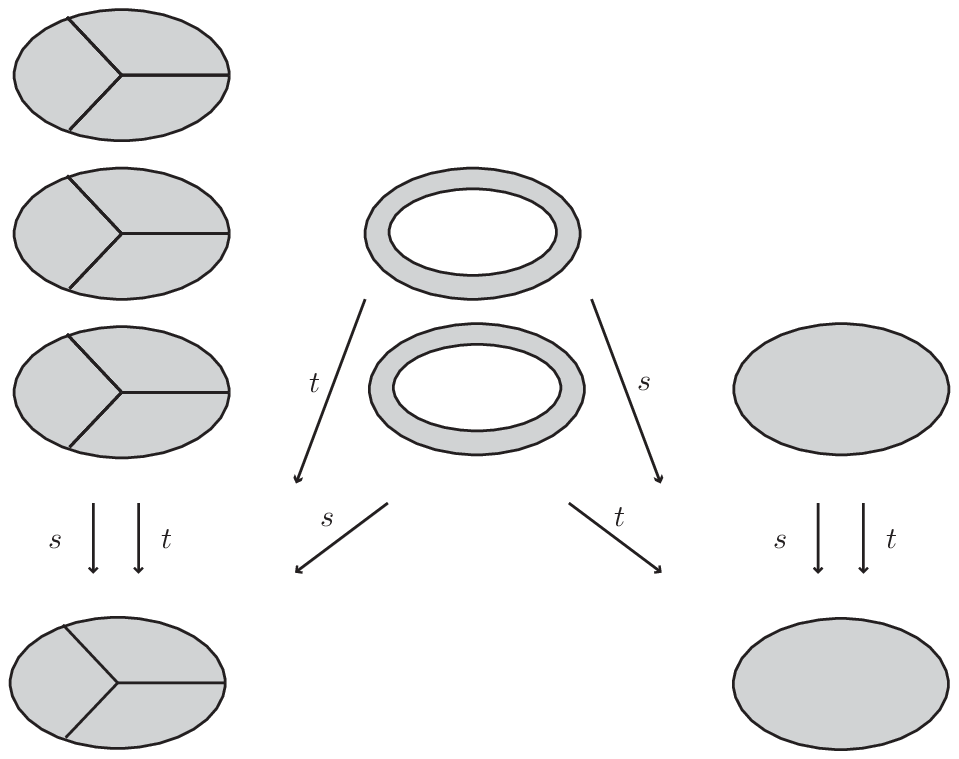}
\caption{}
\label{t}
\end{figure}

Now consider the translation groupoid $\cK=S^1\ltimes S^3$ defined by the action of $S^1$ on $S^3$ given by  $(v,w)\mapsto (z^3v,zw)$. The  manifold of objects is $G_0=S^3$ and the manifold of arrows is  $G_1=S^3\times S^1$.
If we think of $S^3$ as the union of two solid tori as shown in figure \ref{solid}, we have that the orbits of this action are circles of length $2\pi$ for points on the two cores of the tori and circles of length $2\pi\sqrt{3\|v\|^2+\|w\|^2}$ elsewhere. This is a Seifert fibration on $S^3$. The inclusion of two transverse disks to this  Seifert fibration determines a functor $\e\colon \G\to \cK$ which is an essential equivalence. Thus the groupoids $\cK$ and $\G$ are Morita equivalent.

\begin{figure}
\includegraphics[height=2in]{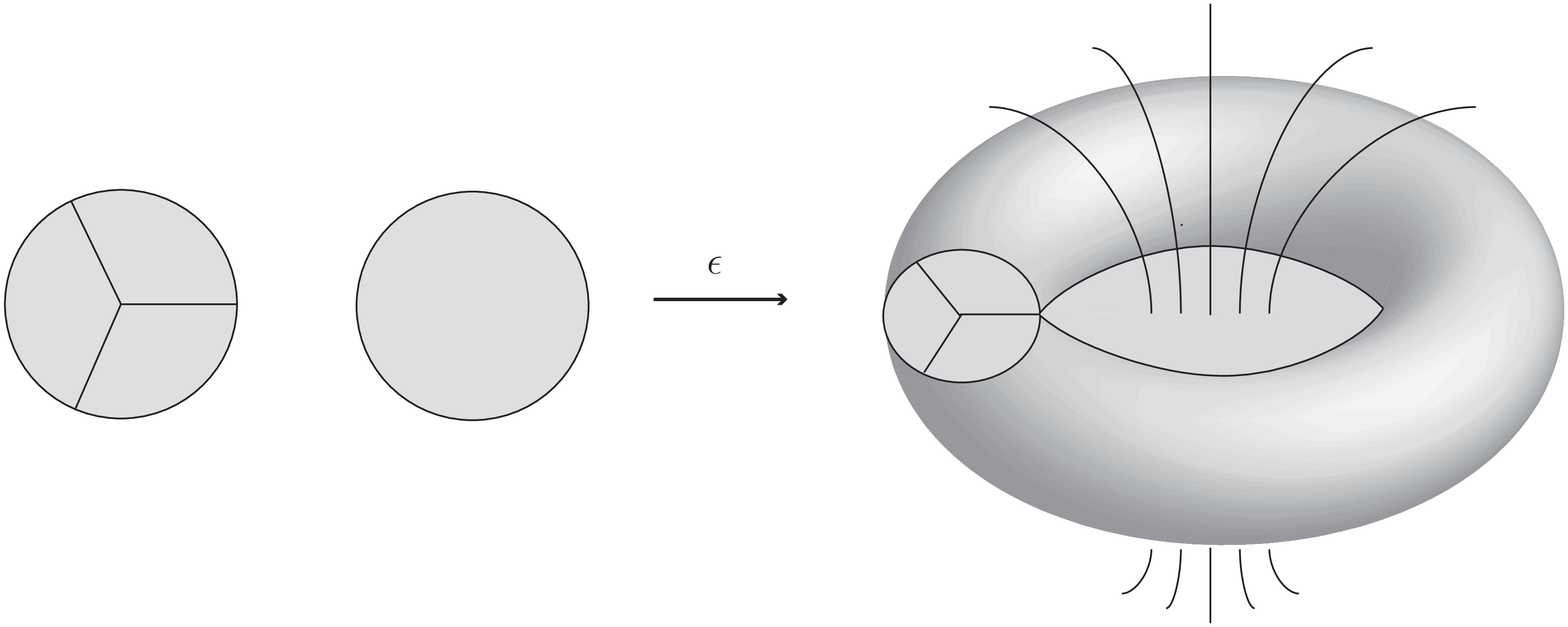}
\caption{}
\label{solid}
\end{figure}
\end{example}

\section{Homotopy}
We will define now two notions of homotopy that will generalize the notions of strong and Morita equivalence respectively.

Given a subdivision $S=\{0=r_0\le r_1\le\cdots\le r_n=1\}$ of the interval $I=[0,1]$ consider  a groupoid $\I_S$ whose manifold of objects  is given by the following disjoint union:
$$\bigsqcup_{i=1}^{n} [r_{i-1}, r_{i}]$$
An element in the connected component  $[r_{i-1}, r_{i}]$ will be denoted by $(r,i)$. Then the manifold of objects  is ${(\I_S)}_0=\{(r,i)| r\in [r_{i-1}, r_{i}], i=1, \ldots, n\}$.
The manifold of arrows of $\I_S$ is given by the disjoint union: $$\left(\bigsqcup_{i=1}^{n}  [r_{i-1}, r_{i}]\right)\bigsqcup\{r_1,\cdots, r_{n-1}, r'_1,\cdots, r'_{n-1}\}$$
where $\displaystyle\bigsqcup_{i=1}^{n}  [r_{i-1}, r_{i}]$ is the set of unit arrows and for each point $r_i$ in the subdivision $S$ two arrows were added: $r_i$ and its inverse arrow $r'_i$ such that the source of $r_i$ is $(r_i,i)$ and its target is $(r_i,i+1)$.

\[
\xy
 (7,0)*{[}="2";
  (21,0)*{]}="3";
   (35,0)*{}="4";
   (28,0)*{[}="5";
  (42,0)*{]}="6";
    "3";"5" **\crv{(24,4) & (25,4)};
  "3";"5" **\crv{(24,-4) & (25,-4)};
  (24.5,3)*{\scriptstyle >}+(0,3)*{\scriptstyle r_i};
    (24.5,-3)*{\scriptstyle <}+(0,-3)*{\scriptstyle r^{-1}_i};
      (7,-4)*{\scriptstyle r_{i-1}};
    (21,-4)*{\scriptstyle r_i};
  (14,2.5)*{\scriptstyle };
     (28,-4)*{\scriptstyle r_{i}};
    (42,-4)*{\scriptstyle r_{i+1}};
   (56,2.5)*{\scriptstyle };
   "2"; "3" **\dir{-} ;
    "5"; "6" **\dir{-};
{ \ar@{}"4"; "5"};
(0,0)*{\cdots};
(49,0)*{\cdots};
\endxy 
\]
We call  ${\bf{0}}$ the trivial groupoid over $\{0\}$ and  ${\bf{1}}$ the trivial groupoid over $\{1\}$.

Let $\phi,\psi: \cK \to \G$ be morphisms. We will say that $\phi$ is {\it homotopic} to $\psi$ if there exists a subdivision $S$ and a morphism $H^S: \cK\times \I_S \to \G$ such that 
$H^S_{\bf{0}}= \phi$ and $H^S_{\bf{1}}= \psi$.

This defines a relation of equivalence between morphisms that we will call {\it homotopy equivalence}  and that sometimes we will refer to it as ${\it strong}$ or ${\it strict}$ homotopy equivalence depending what feature we want to emphasize since we will introduce a weaker version (in the same sense that an essential equivalence weakens a strong equivalence) and a generalized version of this notion (in the same sense that a Morita equivalence generalizes an essential equivalence).
\begin{examples}
\begin{enumerate}
\item A natural transformation $T\colon \phi\to\psi$ determines an {\it homotopy}  $H^S: \cK\times \I_S \to \G$ over the subdivision $S=\{0=r_0, r_1=1/2, r_2=1\}$. The homotopy is given by 
\begin{equation*}
H^S(x,(r,i))=
\begin{cases} \phi(x) & \text{if $r\le 1/2$ and $i=1$,}
\\
\psi(x) &\text{if $r\ge 1/2$ and $i=2$.}
\end{cases}
\end{equation*}
on objects;
\begin{equation*}
H^S(k,u(r,i))=
\begin{cases} \phi(k) & \text{if $r\le 1/2$ and $i=1$,}
\\
\psi(k) &\text{if $r\ge 1/2$ and $i=2$.}
\end{cases}
\end{equation*}
on units arrows of $\I_S$ and $H^S(k,r_1)=T(y)\phi(k)$ for the arrow $r_1\colon (1/2,1)\to (1/2,2)$.

Therefore, there exist a subdivision $S$ and a morphism  $H^S$ such that $H^S(x,0)=\phi(x)$,  $H^S(x,1)=\psi(x)$ and $H^S(k,0)=\phi(k)$,  $H^S(k,1)=\psi(k)$.

\item A morphism $H: \cK\times \I \to \G$, with $\I$ being the unit groupoid over the interval $I=[0,1]$, such that $H_0\sim \phi$ and $H_1\sim \psi$ determines a homotopy $H^S: \cK\times \I_S \to \G$ over the subdivision $S=\{r_0=0, r_1=0,r_2=0,r_3=1,r_4=1,r_5=1\}$. The original morphism  $H: \cK\times \I \to \G$ does not define an equivalence relation, since transitivity fails.

\item An ordinary homotopy $H: \cK\times \I \to \G$ such that  $H_0= \phi$ and $H_1= \psi$ determines an homotopy $H^S: \cK\times \I_S \to \G$ over the subdivision  $S=\{0=r_0, r_1=1\}$. The morphism $H: \cK\times \I \to \G$ defines an equivalence relation but fails to be invariant of Morita equivalence (it is not even invariant of strong equivalence for groupoids).
\end{enumerate}
\end{examples}

\begin{prop}\label{inj}
Let $H:\U\times{\I_{S}}\to \G$ be a homotopy of the inclusion, $H_{\bf{0}}=i_{\U}$.  Then, there is an injection $j: G_x\to G_{y_r}$ for all $y_r=H^S(x,(r,i))$.
\end{prop}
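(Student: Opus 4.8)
The plan is to produce the injection as the homomorphism induced on isotropy by the homotopy itself, and then to show that injectivity, clear at the initial object, spreads to all of $\I_S$. Fix $x\in U_0$. For each object $(r,i)$ of $\I_S$ the map $k\mapsto(k,u(r,i))$ is a functor $\U\to\U\times\I_S$, so $j_{(r,i)}:=H(-,u(r,i))$ is a morphism $\U\to\G$ carrying $x$ to $y_r=H(x,(r,i))$; restricting to the isotropy at $x$ (using that $\U$ is the full subgroupoid on $U_0$, as the subgroupoids in question are, so that $U_x=G_x$), we get a group homomorphism $j_{(r,i)}\colon G_x\to G_{y_r}$. Because $H_{\bf{0}}=i_{\U}$, the homomorphism $j_{(0,1)}$ is the identity inclusion of $G_x$ and hence injective. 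It therefore suffices to propagate injectivity of $j$ through $\I_S$ along the two kinds of moves available there: across a joining arrow $r_i\colon(r_i,i)\to(r_i,i+1)$, and within a single component $[r_{i-1},r_i]$, on which $\I_S$ is the unit groupoid.

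For the joining arrows, set $\gamma_i:=H(u(x),r_i)$, an arrow of $\G$ from $H(x,(r_i,i))$ to $H(x,(r_i,i+1))$. For $g\in G_x$ the arrow $(g,r_i)$ of $\U\times\I_S$ admits the two factorizations $(g,r_i)=(u(x),r_i)\circ(g,u(r_i,i))=(g,u(r_i,i+1))\circ(u(x),r_i)$, and applying $H$ yields $\gamma_i\,j_{(r_i,i)}(g)=j_{(r_i,i+1)}(g)\,\gamma_i$. Thus $j_{(r_i,i+1)}$ equals $j_{(r_i,i)}$ followed by conjugation by $\gamma_i$, which is an isomorphism between the two isotropy groups (every arrow in a groupoid is invertible); so $j_{(r_i,i)}$ is injective exactly when $j_{(r_i,i+1)}$ is.

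Within a component, $r\mapsto j_{(r,i)}(g)=H(g,u(r,i))$ is a continuous path in $G_1$ whose source and target are $y_r$ for every $r$. The plan is to show that $M:=\{r\in[r_{i-1},r_i]\ :\ j_{(r,i)}\text{ is injective}\}$ is open and closed; since it contains the left endpoint of the component (reached inductively from the previous steps) and the interval is connected, this forces $M=[r_{i-1},r_i]$. When $G_x$ is finite, $M$ is open because it is the finite intersection over $g\neq e$ of the open sets $\{r\ :\ H(g,u(r,i))\neq u(y_r)\}$. For closedness I would show the complement open: if $H(g,u(r_0,i))=u(y_{r_0})$ with $g\neq e$, then the local model of $\G$ at the unit $u(y_{r_0})$ forces $H(g,u(r,i))=u(y_r)$ for $r$ near $r_0$ — for an étale groupoid this is immediate, since $s$ is a local diffeomorphism near $u(y_{r_0})$ and hence an arrow of $\G$ close to $u(y_{r_0})$ is the unique one with its source, namely a unit — so $g$ lies in the kernel of $j_{(r,i)}$ throughout a neighborhood of $r_0$. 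Chaining the joining and component steps outward from $(0,1)$ then reaches every object of $\I_S$ and establishes the claim.

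I expect the last of these to be the main obstacle for general Lie groupoids: when $\G$ is not étale, $s$ is only a submersion near $u(y_{r_0})$ and small nontrivial torsion in the isotropy near a unit is not excluded (for instance in the rotation action of $S^1$ on the plane), so one cannot conclude the equality $H(g,u(r,i))=u(y_r)$ pointwise from its holding at $r_0$; instead one must exploit the smoothness of the whole family $H(-,u(-,i))$ — a winding/continuity argument at $u(y_{r_0})$ phrased through the Lie algebroid of $\G$ — to rule out a kernel jumping off. For the intended applications $\G$ is an orbifold groupoid, hence proper and étale with finite isotropy, and the argument above applies verbatim; in that case $j$ is moreover a monomorphism of finite groups, which is the form in which the proposition is subsequently used.
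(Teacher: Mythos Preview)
Your approach matches the paper's: both decompose $\I_S$ into its interval components and joining arrows, use conjugation by $H(u(x),r_i)$ to transfer injectivity across the joins, and then argue injectivity propagates along each component starting from the identity at $(0,1)$. The paper's proof is terse and simply asserts the component step (``determines an ordinary homotopy, then $G_x$ injects into $G_{H(x,(r,1))}$'') without justification, whereas you supply an open--closed argument that makes this step precise for \'etale groupoids with finite isotropy. In that sense your write-up is more complete than the paper's own proof.

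One correction to your closing paragraph: the general Lie groupoid case cannot be rescued by a smoothness or Lie-algebroid argument, because the statement is false without a discreteness hypothesis on isotropy. Take $\G=\U=\bullet^{\R}$ and the homotopy $H(g,u(r))=(1-r)g$; this is a smooth functor with $H_{\bf 0}=\id$, yet at $r=1$ the induced map $G_x\to G_{y_1}$ is the zero homomorphism $\R\to\R$. So your caution is well placed, but the resolution is not a sharper analytic argument---it is an extra hypothesis (discrete, in practice finite, isotropy), which is exactly the orbifold setting in which the proposition is subsequently used.
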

\begin{proof}
The homotopy $H^S$ restricted to the connected component $U_x\times [0,r_1]$ determines an ordinary homotopy, then $G_x$ injects into $G_{H(x, (r,1))}$ for all $r\in [0, r_1]$. The isotropy groups $G_{H(x, (r_1,1))}$ and $G_{H(x, (r_1,2))}$ coincide, since there is an arrow $r_1$ from $(r_1,1)$ to $(r_1,2)$. Then, we have a finite number of injections:
$$G_x \rightarrowtail G_{H(x, (r,1))} \rightarrowtail G_{H(x, (r_1,1))}= G_{H(x, (r_1,2))}\rightarrowtail G_{H(x, (r,2))}\ldots \rightarrowtail G_{H(x, (r_n,n))}$$
and $G_x$ injects into $G_{H^S(x,(r,i))}$ for all $r\in [r_{i-1}, r_i]$ with $i=1,\ldots, n$.
\end{proof}

We recall now the definition of $\G$-path, due to Haefliger \cite{H1,H2,H3,Bri}. For another approach see also \cite{Loop}.
A {\it $\G$-path} from $x$ to $y$ over the subdivision $S=\{0=r_0\le r_1\le\cdots\le r_n=1\}$ of the interval $[0, 1]$ is a sequence:
$$(\al_1,g_1,\al_2,g_2,\ldots, \al_n)$$
where 
\begin{enumerate}
\item for all $1\le i\le n$ the map $\al_i:[r_{i-1}, r_i]\to G_0$ is a path with $\al_1(0)=x$ and $\al_n(1)=y$
\item  for all $1\le i\le n-1$ the arrow $g_i\in G_1$ satisfies:

$s(g_i)=\al_i(r_i)$  

$t(g_i)=\al_{i+1}(r_i)$ 
\[
\xy
 (7,0)*{\bullet}="2";
  (21,0)*{\bullet}="3";
   (35,0)*{\bullet}="4";
 (49,0)*{\bullet}="5";
  (63,0)*{\bullet}="6";
    (4,0)*{x};
    (66,0)*{y};
  (14,2.5)*{\scriptstyle \al_1};
   (28,2.5)*{\scriptstyle g_1};
   (56,2.5)*{\scriptstyle \al_n};
   "3"; "4" **\dir{-} ?(.55)*\dir{>};
 { \ar@{~>}"2"; "3"};
 { \ar@{~>}"5"; "6"};
{ \ar@{}"4"; "5"};
(42,0)*{\cdots};
\endxy 
\]
\end{enumerate}

Our notion of  homotopy $H^S: \cK\times \I_S \to \G$ between $H^S_{\bf{0}}= \phi$ and $H^S_{\bf{1}}= \psi$
 determines for each $x\in K_0$ a $\G$-path over the subdivision $S$ between the objects $\phi(x)$ and $\psi(x)$ in $G_0$. In fact, it determines many more paths in $G_1$ other than the one defined by the unit arrows. If we think of a $\G$-path as a morphism from some version of the interval $I$ to $\G$, we have that the Haefliger $\G$-paths correspond to morphisms  $\sigma\colon \I_S\to \G$, where the groupoid $\I_S$ is the one constructed above.

\begin{remark}\label{gpath}  A homotopy $H^S: \cK\times \I_S \to \G$ determines a $\G$-path $H^S_{\bf{1}_x}\colon \I_S \to \G $  where ${\bf{1}_x}$ is the trivial groupoid over $x\in K_0$.
\end{remark}
 
 Note that thinking of a $\G$-path as a morphism $\sigma$ from $\I_S$ to $\G$, the image of the arrows in $\I_S$ by $\sigma$ is almost entirely  contained in $u(G_0)$ since most of the arrows in $\I_S$ are units, with the only exception of the arrows $\{r_1,\cdots, r_{n-1}\}$ and their inverses. In other words, we can not fill the space $G_1$ with paths of arrows given by the $\G$-paths.
 This is going to be a very relevant problem when we try to integrate a vector field. Motivated by these considerations, we propose the notion of {\it multiple} $\G$-path.

To define the several branches coming into a multiple $\G$-path we need to introduce several copies of each subinterval in the subdivision $S$.
 We define a groupoid $\I'_S$ associated to the interval $I$ and the subdivision $S$ in the following way.  Given a subdivision $S=\{0=r_0\le r_1\le\cdots\le r_n=1\}$ of the interval $I=[0,1]$ consider  the  manifold of objects  given by the disjoint union:
$$\bigsqcup_{i=1}^{n} \bigsqcup_{j=1}^{m_i}[r_{i-1}, r_{i}]_j$$
where the extra copies of each interval are indexed by $j$.

\[
\xy
 (7,0)*{[}="2";
  (21,0)*{]}="3";
   (35,0)*{}="4";
 (49,0)*{[}="5";
  (63,0)*{]}="6";
   (7,-4)*{\scriptstyle 0};
    (21,-4)*{\scriptstyle r_1};
  (14,2.5)*{\scriptstyle };
   (28,0)*{\scriptstyle  \sqcup};
    (40,0)*{\scriptstyle  \sqcup};
    (49,-4)*{\scriptstyle r_{n-1}};
    (63,-4)*{\scriptstyle 1};
   (56,2.5)*{\scriptstyle };
    "2"; "3" **\dir{-} ;
    "5"; "6" **\dir{-};
{ \ar@{}"4"; "5"};
(34,0)*{\cdots};
(7,-20)*{[}="12";
  (21,-20)*{]}="13";
   (35,-10)*{}="14";
 (49,-20)*{[}="15";
  (63,-20)*{]}="16";
   (7,-24)*{\scriptstyle 0};
    (21,-24)*{\scriptstyle r_1};
  (14,2.5)*{\scriptstyle };
   (14,-4)*{\scriptstyle  \sqcup};
     (14,-16)*{\scriptstyle  \sqcup};
     (14,-10)*{\vdots};
     (56,-4)*{\scriptstyle  \sqcup};
     (56,-16)*{\scriptstyle  \sqcup};
     (56,-10)*{\vdots};
     (35,-10)*{\vdots};
    (49,-24)*{\scriptstyle r_{n-1}};
    (63,-24)*{\scriptstyle 1};
   (56,2.5)*{\scriptstyle };
    "12"; "13" **\dir{-};
    "15"; "16" **\dir{-};
{ \ar@{}"14"; "15"};
\endxy 
\]
An element in the connected component  $[r_{i-1}, r_{i}]_j$ will be denoted by $(r,i,j)$. Then the manifold of objects  is $${(\I'_S)}_0=\{(r,i,j)| r\in [r_{i-1}, r_{i}], j=1, \ldots, m_i,  i=1, \ldots, n\}.$$
The manifold of arrows of $\I'_S$ is generated by the disjoint union
of:
\begin{enumerate}
\item  $\displaystyle\bigsqcup_{i,j}  [r_{i-1}, r_{i}]_j$  the set of unit arrows;
\item $\displaystyle\bigsqcup_{i} r_i$  the set of arrows connecting the jumps at the subdivision points $r_i$, i.e. the source and target of the arrow $r_i$ are $s(r_i)=(r_i, i,1)$ and  
$t(r_i)=(r_i, i+1,1)$ and 
\item $\displaystyle\bigsqcup_{i,j} [r_{i-1}, r_{i}]_j$ the set of arrows between the different copies $[r_{i-1}, r_{i}]_j$ of each subinterval, i.e. the source and target of the arrow $r_{ij}\in[r_{i-1}, r_{i}]_j$ are $s(r_{ij})=(r,i,j)$ and $t(r_{ij})=(r,i,j+1)$.
\end{enumerate}

\[
\xy
 (7,0)*{[}="0";
  (21,0)*{]}="1";
   (7,-5)*{}="2";
   (2,-10)*{\scriptstyle r_{ij}};
    (8,-5)*{}="20";
    (9,-5)*{}="30";
    (15,-10)*{\cdots};
    (21,-5)*{}="40";
  (21,-5)*{}="3";
   (35,0)*{}="4";
 (28,0)*{[}="5";
  (42,0)*{]}="6";
    "0"; "1" **\dir{-} ;
    "5"; "6" **\dir{-};
{ \ar@{}"4"; "5"};
 "1";"5" **\crv{(24,4) & (25,4)};
  "1";"5" **\crv{(24,-4) & (25,-4)};
  (24.5,3)*{\scriptstyle >}+(0,3)*{\scriptstyle r_i};
    (24.5,-3)*{\scriptstyle <}+(0,-3)*{\scriptstyle r^{-1}_i};
(7,-20)*{[}="10";
  (21,-20)*{]}="11";
  (7,-15)*{}="12";
  (8,-15)*{}="22";
  (9,-15)*{}="32";
   (21,-15)*{}="42";
    "10"; "11" **\dir{-};
{ \ar@{->}"2"; "12"};
{ \ar@{->}"20"; "22"};
{ \ar@{->}"30"; "32"};
{ \ar@{->}"40"; "42"};
\endxy 
\]

 \begin{definition}\label{mpath} A multiple $\G$-path over a subdivision $S$ is a morphism $\sigma\colon\I'_S\to \G$. 
\end{definition}

Note that a $\G$-path in the sense of Haefliger is a multiple $\G$-path over the same subdivision by taking $j=1$ for all subintervals and 
$\sigma((r,i))=\al_i(r)$ on objects; $\sigma(r_i)=g_i$ on arrows $r_i$ and $\sigma(u(r,i))=u(\al_i(r))$ for unit arrows. 

We can think of a multiple $\G$-path as a $\G$-path between {\it orbits} or as a path between orbit subgroupoids. In this spirit, we will say that the initial subgroupoid of the path is $\sigma(\bf{0'})$ and the end subgroupoid is $\sigma(\bf{1'})$. Where $\bf{0'}$ and $\bf{1'}$ are the {\it full} subgroupoids over the orbits of  $0$ and $1$, which in general will not be trivial groupoids.

\begin{example}
Consider the Lie groupoid $\G$ given in example \ref{tear} where $G_0=D_1\sqcup D_2$. We show in  figure \ref{G} the image in $G_0$ of a Haefliger $\G$-path between the points $x$ and $y$ given by the map $\sigma\colon \I_S\to \G$ for the subdivision $S=\{0,{1\over 2}, 1\}$.
\begin{figure}
\includegraphics[height=1in]{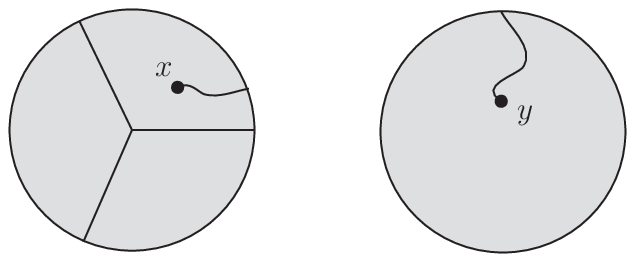}
\caption{}
\label{G}
\end{figure}

In  figure \ref{multi} we see the image of a multiple $\G$-path between the orbits of $x$ and $y$ given by $\tau\colon  \I'_S\to \G$ over the same subdivision. The manifold of objects of $ \I'_S$ is now a disjoint union of three copies of the interval $[0,{1\over 2}]$ plus one copy of the interval $[{1\over 2},1]$.
\begin{figure}
\includegraphics[height=1in]{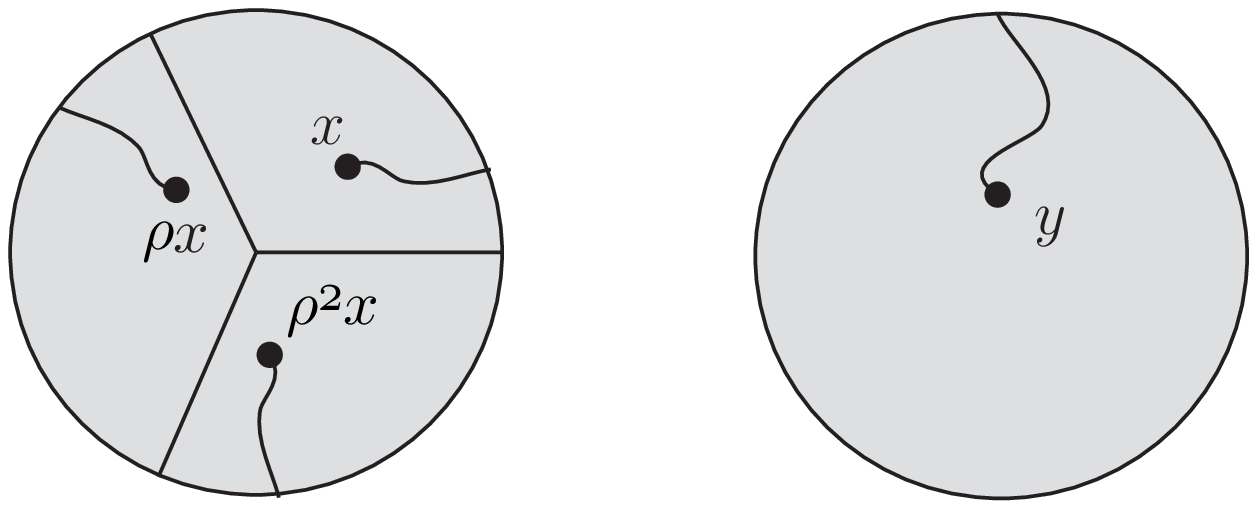}
\caption{}
\label{multi}
\end{figure}

\end{example}

\begin{remark}
A homotopy $H^S: \cK\times \I_S \to \G$, when restricted to the full subgroupoid ${\bf{X}}$ over an orbit $\cO(x)$, defines a multiple $\G$-path between the orbit subgroupoids $H({\bf{X}},\bf{0})$ and $H({\bf{X}},\bf{1})$. If $H^S$ is a homotopy between $\phi$ and $\psi$, then for all $x\in K_0$, the $S$-homotopy defines a multiple $\G$-path between the orbits of $\phi(x)$ and $\psi(x)$.
\end{remark}

Sometimes we will denote a multiple $\G$-path by $$\sigma=(\al^{j_1}_1,g_1, \al^{j_2}_2 \ldots, \al^{j_{n-1}}_{n-1}, g_{n-1}, \al^{j_n}_n)$$ where 
$(\al^{1}_1,g_1, \al^{1}_2 \ldots, \al^{1}_{n-1}, g_{n-1}, \al^{1}_n)$ is a $\G$-path and $$(\al^{1}_1, g_1, \al^{1}_2, \ldots, h_{ij} g_{i-1} ,\al^{j}_{i}, g_{i}h^{-1}_{ij}, \ldots,
 \al^{1}_{n-1}, g_{n-1}, \al^{1}_n)$$ is a $\G$-path for each $j=1, \ldots, m_i$ and each $i=1, \ldots, n$.

\subsubsection{Essential homotopy equivalences} 

\begin{definition}
A morphism $\eta: \cK \to \G$ of groupoids is an {\it essential homotopy equivalence} of groupoids if
there exists an $S$-homotopy equivalence $h\colon \cK\to \cL$ and an essential equivalence $\e\colon \cL \to \G$ such that $\eta\simeq_S \e h$.
$$
\xymatrix@!=1pc{
{\cK}\ar[rr]^{\eta}\ar[rd]_{h}&&{\G}\\
&{\cL}\ar[ur]_{\e}&
}
$$

\end{definition}
This implies that for any object $y\in G_0$, there exists an object  $x\in K_0$ whose image $\eta(x)$ can be connected to $y$ by a concatenation of paths and arrows. Also $\eta$ induces a homotopy equivalence  $|\eta|$ between the orbit spaces.

\begin{remark} \label{isotropy} If  $\eta: \cK \to \G$  is an essential homotopy equivalence, then there is an injection between the corresponding isotropy groups $K_x$ and $G_{\eta(x)}$.
\end{remark}

\begin{example} \label{ex} Consider the inclusion functor  $\eta: \bullet^{\Z_2} \to \Hol(M,\F_S)$ where $ \F_S$ is the Seifert fibration of the M\"obius band $M$. Let $x=\eta(\bullet)$ be a fix point in the central fiber in $M$. Consider the inclusion functors
$h\colon \bullet^{\Z_2}\to {\Z_2}\ltimes I $  and $\e\colon {\Z_2}\ltimes I \to \Hol(M,\F_S)$. The functor $h$ is a homotopy equivalence with homotopic inverse the constant map and $\e$ is an essential equivalence. We have that 
$\eta= \e h$  is an essential homotopy equivalence. Note that in this case $\eta$ is neither a (strong) homotopy equivalence  nor an essential equivalence.
\end{example}

It is clear that essential equivalences as well as (strong) homotopy equivalences are essential homotopy equivalences.

\begin{prop}\label{BF1} 
\begin{enumerate}
\item An essential equivalence is  an essential homotopy equivalence. 
 \item A homotopy equivalence is an essential homotopy equivalence.
 \end{enumerate}
\end{prop}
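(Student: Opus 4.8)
The plan is to prove each item by exhibiting a suitable \emph{trivial} factorization: in each case the required intermediate groupoid $\cL$ will be either $\cK$ or $\G$, one of the two factors will be an identity morphism, and the relation $\eta\simeq_S\e h$ will hold strictly (hence a fortiori up to homotopy, using that $\simeq_S$ is reflexive, as recorded just after the definition of homotopy — e.g.\ via the constant homotopy over the subdivision $\{0,1\}$, or via the identity natural transformation as in Example~(1)).

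For part (1), let $\eta\colon\cK\to\G$ be an essential equivalence. I would take $\cL=\cK$, $h=\id_\cK$, and $\e=\eta$. The identity $\id_\cK$ is an $S$-homotopy equivalence, being its own homotopy inverse: $\id_\cK\circ\id_\cK=\id_\cK\simeq_S\id_\cK$. By hypothesis $\e=\eta$ is an essential equivalence. Since $\e h=\eta\circ\id_\cK=\eta$ on the nose, we have $\eta\simeq_S\e h$, and the triangle exhibits $\eta$ as an essential homotopy equivalence.

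For part (2), let $h\colon\cK\to\G$ be a (strong) homotopy equivalence. Here I would take $\cL=\G$, keep $h$ in the $h$-slot, and set $\e=\id_\G$. The only point requiring a word is that $\id_\G$ is an essential equivalence: condition (i) reduces to $t\colon G_1\times_{G_0}G_0\cong G_1\to G_0$ being a surjective submersion, which is one of the standing Lie groupoid axioms, and condition (ii) is the tautology that $G_1$ is the pullback of $(s,t)\colon G_1\to G_0\times G_0$ along $\id_{G_0}\times\id_{G_0}$. Again $\e h=\id_\G\circ h=h$ strictly, so $h\simeq_S\e h$ and $h$ is an essential homotopy equivalence.

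There is no genuine obstacle; the only thing one must be careful about is \emph{which} factor receives the identity. In particular, in part (1) one should resist taking $\cL=\G$, $\e=\id_\G$, $h=\eta$, since that would demand that $\eta$ itself be an $S$-homotopy equivalence — and an essential equivalence need not even be a strong equivalence, by the Remark following the first Proposition of \S2.2. Placing the essential equivalence in the $\e$-slot for (1) (with the identity as $h$), and the homotopy equivalence in the $h$-slot for (2) (with the identity as $\e$), is precisely what makes both arguments immediate.
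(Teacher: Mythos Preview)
Your argument is correct; the paper itself offers no proof beyond the remark ``It is clear that essential equivalences as well as (strong) homotopy equivalences are essential homotopy equivalences,'' and your trivial factorizations (identity in the $h$-slot for (1), identity in the $\e$-slot for (2)) are precisely the obvious ones that justify this claim. Your final paragraph about which slot receives the identity is a helpful clarification that the paper leaves implicit.
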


\begin{prop}\label{BF4} If $\eta\phi \simeq_S \eta\psi$ and $\eta$ is an essential homotopy equivalence, then $\phi \simeq_S \psi$.
\end{prop}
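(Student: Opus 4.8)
The plan is to reduce the statement to the corresponding cancellation property for essential equivalences (Proposition~\ref{inj}-type injectivity is not enough by itself; we need genuine cancellation of morphisms up to strict homotopy), by unwinding the definition of essential homotopy equivalence. Write $\eta \simeq_S \e h$ with $h\colon \cK\to\cL$ an $S$-homotopy equivalence and $\e\colon\cL\to\G$ an essential equivalence, as in the definition. Since $\simeq_S$ is a congruence for composition (a homotopy $\cK\times\I_S\to\G$ can be whiskered with morphisms on either side, after possibly refining the subdivision to a common one), the hypothesis $\eta\phi\simeq_S\eta\psi$ rewrites as $\e h\phi \simeq_S \e h\psi$. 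So it suffices to prove two things: first, that an essential equivalence $\e$ is left-cancellable up to strict homotopy, i.e. $\e f\simeq_S \e g$ implies $f\simeq_S g$; and second, that an $S$-homotopy equivalence $h$ is left-cancellable up to strict homotopy, i.e. $h\phi\simeq_S h\psi$ implies $\phi\simeq_S\psi$.

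For the second point, let $k\colon\cL\to\cK$ be a strict-homotopy inverse of $h$, so $kh\simeq_S \id_\cK$ (via some subdivision). Then whiskering $h\phi\simeq_S h\psi$ with $k$ gives $kh\phi\simeq_S kh\psi$, and composing with $kh\simeq_S\id$ on both sides (again refining subdivisions and using transitivity of $\simeq_S$, which holds precisely because our homotopies are defined over arbitrary subdivisions $\I_S$, as emphasized in Section~3) yields $\phi\simeq_S\psi$. This part is essentially formal once one is careful about concatenating homotopies over different subdivisions. For the first point, one uses the two defining conditions of an essential equivalence. Given a strict homotopy $H\colon \cL_1\times\I_S\to\G$ between $\e f$ and $\e g$, where $f,g\colon \cM\to\cL$ —wait, more precisely $f,g$ share domain, say $\cK'$— one wants to lift $H$ through $\e$. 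The key technical input is the homotopy pullback / homotopy lifting property of essential equivalences alluded to in Section~3: because $\e$ is fully faithful (the square with $(s,t)$ is a pullback) and essentially surjective via a submersion, one can lift paths and arrows in $\G$ landing near the image of $\e$ to paths and arrows in $\cL$, uniquely on the level of arrows by full faithfulness. Concretely, the homotopy $H$ restricted to each $\cL_1\times[r_{i-1},r_i]$ is an ordinary homotopy into $\G$; since $\e$ is essentially surjective with surjective-submersion $t\pi_1$, and since the endpoints already lie in the image of $\e$ (they equal $\e f$, $\e g$), one obtains by a pullback construction a lift $\widetilde H_i$ into $\cL$ over each subinterval, and the jump arrows $r_i$ in $\I_S$ are handled by full faithfulness of $\e$ which gives a unique arrow in $\cL$ mapping to the image jump arrow. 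Gluing the $\widetilde H_i$ over a (possibly refined) subdivision produces a strict homotopy $\widetilde H\colon \cK'\times\I_{S'}\to\cL$ from $f$ to $g$.

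The main obstacle I expect is the lifting step for the essential equivalence: one must check that the pullback-of-manifolds construction used to define $\cL_1$ out of $G_1$ in condition (ii) interacts correctly with the smooth family parametrized by $\cL_1\times[r_{i-1},r_i]$, so that the lifted homotopy is genuinely a smooth functor (the source/target/multiplication compatibilities are automatic from the pullback universal property, but smoothness of the lift requires that $t\pi_1$ being a surjective submersion lets us choose smooth local sections to transport the homotopy, and then full faithfulness pins down the choice on arrows). A clean way to organize this is to invoke the homotopy pullback of groupoids already set up in Section~3: form the homotopy pullback of $\e$ along $H\colon \cK'\times\I_S\to\G$, observe that since $\e$ is an essential equivalence the projection from this homotopy pullback to $\cK'\times\I_S$ is again an essential equivalence (stability of essential equivalences under homotopy pullback), and that it admits a section over $\cK'\times\{0\}$ and $\cK'\times\{1\}$ coming from $f$ and $g$; a standard argument then extends the section over all of $\cK'\times\I_{S'}$ for a refined $S'$, and postcomposing with the other projection gives the desired $\widetilde H$. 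If that stability statement is available from Section~3 (it is the natural groupoid analogue of ``pullback of an acyclic fibration is acyclic''), the proof becomes short; otherwise one proves it by hand using exactly conditions (i) and (ii) above. In either case, the isotropy injection of Remark~\ref{isotropy} is the shadow of this construction on isotropy groups and serves as a useful sanity check but is not used directly in the proof.
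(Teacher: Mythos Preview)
Your proof is correct and follows exactly the paper's strategy: rewrite the hypothesis as $\e h\phi \simeq_S \e h\psi$ via $\eta\simeq_S \e h$, cancel the essential equivalence $\e$ to get $h\phi\simeq_S h\psi$, then cancel $h$ by whiskering with its homotopy inverse. The paper's proof is four lines and simply asserts the cancellation for $\e$ (``because $\e$ is an essential equivalence'') without justification, so your lifting discussion via full faithfulness and essential surjectivity fills in what the paper leaves implicit rather than constituting a different approach.
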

\begin{proof}
Since $\eta \simeq_S \e h$, we have that $\e h \phi \simeq_S \e h \psi$. Then $ h \phi \simeq_S  h \psi$ because $\e$ is an essential equivalence. If $g$ is the homotopic inverse of $h$, we have that $ g h \phi \simeq_S g h \psi$. Thus, $  \phi \simeq_S  \psi$.
\end{proof}

Let $\phi: \cK \to \G$ and   $\psi: \cJ \to \G$ be morphisms of Lie groupoids and $S$ a subdivision of the interval $I=[0,1]$. Let $P_S(\G)$ be the space of  $\G$-paths over the subdivision $S$.
\begin{definition}
The {\it groupoid homotopy pullback }  $\cP_S=\cK\times_S^h \cJ$ is the Lie groupoid whose manifold of objects is 
$$(\cP_S)_0=\{(x,\sigma, y)| x\in K_0, y\in J_0, \sigma\in P_S(\G)  \mbox { with } \sigma(0)=\phi(x) \mbox { and } \sigma(1)=\psi(y)\}$$ and whose manifold of arrows is 
$$(\cP_S)_1=\{(k,\sigma, j)| k\in K_1, j\in J_1,  \sigma\in P_S(\G)  \mbox { with } \sigma(0)=\phi(s(k)) \mbox { and } \sigma(1)=\psi(t(j))\}$$
and source and target maps are given by:  $s(k,\sigma, j)=(s(k),\sigma, s(j))$ and $t(k,\sigma, j)=(t(k),\phi(j)\sigma\phi(k)^{-1}, t(j))$.
\end{definition}

The groupoid homotopy pullback is well defined (up to homotopy) whenever one of the maps $\phi$ or $\psi$ is homotopic to a submersion on objects. 

We will show the construction of the manifold of objects $(\cP_S)_0$ for a subdivision $S=\{t_0=0,t_1, t_2,  t_3=1\}$ by a sequence of pullbacks and homotopy pullbacks of manifolds. 

We recall first the definition of homotopy pullback of manifolds \cite{Ma}. Given two maps $f: X \to Z$ and   $g: Y \to Z$, the {\it homotopy pullback} is the space 
$$X\times^hY=\{(x,\al, y)| x\in X, y\in Y \mbox { and } \al  \mbox { is a path between } f(x) \mbox { and } g(y)\}.$$
The following diagram commutes up to homotopy and it is universal (up to homotopy) with this property.

\[\xymatrix{
X\times^hY \ar[r]^{p_1} \ar[d]_{p_2}& X \ar[d]^{f} \\ 
Y\ar[r]^{g} & Z}\]

\begin{remark} \label{manifold} If $f$ is homotopic to a submersion, then the homotopic pullback $X\times^hY $ is a manifold homotopic to the ordinary pullback.
\end{remark}

Consider the following {\it ordinary} pullbacks of manifolds:
$$\xymatrix{ K_0\times_{G_0}G_1
\ar[r]^{p_1} \ar[d]_{p_2}& K_0 \ar[d]^{\phi} \\ 
G_1\ar[r]^{s} & G_0} \mbox{ and }
\xymatrix{ J_0\times_{G_0}G_1
\ar[r]^{p'_1} \ar[d]_{p'_2}& J_0 \ar[d]^{\psi} \\ 
G_1\ar[r]^{t} & G_0}$$
and the following {\it homotopy} pullback of manifolds:
$$\xymatrix{ (\cP_S)_0
\ar[rr]^{} \ar[dd]_{p}& &K_0\times_{G_0}G_1 \ar[d]^{p_2} \\ 
&&G_1\ar[d]^{t}\\
J_0\times_{G_0}G_1\ar[r]^{p'_2} & G_1\ar[r]^{s} &G_0} $$
We have that  $(\cP_S)_0$ is the manifold $$\{(x,g,\alpha, h, y)|  \al  \mbox { is a path between } t(g)  \mbox { and } s(h) \mbox { with }  \phi(x)=s(g) \mbox { and }\psi(y)=t(h)\}.$$
Analogously, we can define the manifold of arrows $(\cP_S)_1$ as the homotopy pullback $$(K_1\times_{G_0}G_1)\times^h (J_1\times_{G_0}G_1)=K_1\times_{G_0}G_1  \times^h G_1\times_{G_0}J_1.$$

If $(k,g,\alpha, h, j)$ is an arrow from $(x,g,\alpha, h, y)$ to $(x',g\phi(k)^{-1},\alpha, \psi(j)h, y')$ and $(k',g',\alpha, h', j')$ is an arrow from $(x',g',\alpha, h', y')$ to $(x'',g'\phi(k')^{-1},\alpha, \psi(j')h', y'')$,
the composition of arrows is given by 
$$(k',g,\alpha, h, j')\circ (k,g',\alpha, h', j)=(k'k,g,\alpha, h, j'j)$$

$$\xy
 (13,0)*{\bullet}="1";
 (27,0)*{\bullet}="2";
  (41,0)*{\bullet}="3";
   (55,0)*{\bullet}="4";
   (13,-14)*{\bullet}="7";
   (55,-14)*{\bullet}="8";
   (13,-28)*{\bullet}="9";
   (55,-28)*{\bullet}="12";
   (20,2.5)*{\scriptstyle g};
  (34,2.5)*{{\scriptstyle \al}};
   (48,2.5)*{{\scriptstyle h}};
   (20,-11.5)*{\scriptstyle g'};
   (48,-11.5)*{{\scriptstyle h'}};
   (20,-25.5)*{\scriptstyle g''};
   (48,-25.5)*{{\scriptstyle h''}};
  (8,-7)*{\scriptstyle \phi(k)};
   (60,-7)*{\scriptstyle \psi(j)};
   (8,-21)*{\scriptstyle \phi(k')};
   (60,-21)*{\scriptstyle \psi(j')};
    "1"; "2" **\dir{-} ?(.55)*\dir{>};
   "3"; "4" **\dir{-} ?(.55)*\dir{>};
    "1"; "7" **\dir{-} ?(.55)*\dir{>};
     "4"; "8" **\dir{-} ?(.55)*\dir{>};
      "7"; "9" **\dir{-} ?(.55)*\dir{>};
       "8"; "12" **\dir{-} ?(.55)*\dir{>};
       "7"; "2" **\dir{-} ?(.55)*\dir{>};
     "3"; "8" **\dir{-} ?(.55)*\dir{>};
      "9"; "2" **\dir{-} ?(.55)*\dir{>};
       "3"; "12" **\dir{-} ?(.55)*\dir{>};
   { \ar@{~>}"2"; "3"};
\endxy $$

We can generalize the construction to obtain the groupoid homotopy pullback $\cP_S$ corresponding to the subdivision $S=\{0=r_0\le r_1\le\cdots\le r_n=1\}$
by iterating $n$ homotopy pullbacks. Consider the homotopy pullback $K_0\times^hG^{n-1}=K_0\times^h\overbrace{ G_1\times^h\cdots  \times^h G_1 }^{n-1}$ of length $n$ given by the following diagram: 
\[\xymatrix{
K_0\times^hG^{n-1}\ar[r]^{} \ar[ddddd]_{}& K_0\times^hG^{n-2}_1 \ar[dddd]^{}\ar[r]^{}&\cdots\ar[r]^{}&K_0\times^hG_1\ar[d]^{}\ar[r]^{}&K_0\ar[d]^{} \\ 
& &&G_1\ar[d]^{t}\ar[r]^{s}&G_0\\
&&{\:\:\: \:\:\:  }\ar[r]^{s}&G_0&\\
&&\Ddots&&\\
&G_1\ar[d]^{t}\ar[r]^{s}&{\:\:\: \:\:\:  }&&\\
G_1\ar[r]^{s}&G_0&&&
}\]
Then, we have that

$$(\cP_S)_0= K_0\times_{G_0}^h G^{n-1}\times_{G_0}^h J_0$$ and 
$$(\cP_S)_1= K_1\times_{G_0}^h G^{n-1}\times_{G_0}^h J_1.$$

We will show that the following diagram of groupoids commutes up to strong homotopy:
\[\xymatrix{ \cP_S
 \ar[r]^{\pi_1} \ar[d]_{\pi_3}& \cK \ar[d]^{\phi} \\ 
\cJ\ar[r]^{\psi} & \G}\]
Consider the homotopy $H^S: \cP_S\times \I_S \to \G$ given by $H^S((x,\sigma, y), (r,i,j))=\sigma((r,i,j))$ on objects and $H^S((k,\sigma, j), r_{i})=\sigma(r_{i})$ on arrows (see remark \ref{gpath}). We have that  $H^S_{\bf{0}}= \phi\pi_1$ and $H^S_{\bf{1}}= \psi\pi_3.$

 The homotopy pullback of groupoids satisfies the following universal property. For any groupoid $\cA$ and morphisms $\delta:\cA\to \cK$ and  $\gamma:\cA\to \cJ$ with $\phi \delta\simeq_{H^S} \psi \gamma$ there exists a morphism  $\xi\colon \cA\to \cP_S$ such that both triangles commute up to homotopy:
 $$ \xymatrix{
\cA \ar@/_/[ddr]_{\gamma} \ar@/^/[drr]^{\delta}
\ar@{.>}[dr]|-{\xi} \\
& \cP_S \ar[d]^{\pi_3} \ar[r]_{\pi_1}
& \cK \ar[d]_{\phi}\\
& \cJ \ar[r]^{\psi} & \G }$$
We define  $\xi\colon \cA\to \cP_S$ by $\xi(w)=(\delta(w), H^S_{\bf{1}_w}, \gamma(w))$ on objects and $\xi(a)=(\delta(a), H^S_{\bf{1}_w}, \gamma(a))$ on arrows, where ${\bf{1}_w}$ is the trivial groupoid over $w$.

Note that the groupoid homotopy pullback $\cP_{S}$ is defined for each subdivision $S$ of the interval $I=[0,1]$.

\begin{remark} If $S=\{0=r_0\le r_1\le\cdots\le r_n=1\}$ is a subdivision of $I=[0,1]$, then for all subdivision $S'\supset S$ we have:
\begin{enumerate} 
\item If $\phi\simeq_{H^S}\psi$, then $\phi\simeq_{H^{S'}}\psi$.
\item There exists a morphism $\xi\colon \cP_{S}\to \cP_S'$ such that $\pi'_1\xi=\pi_1$ and $\pi'_3\xi=\pi_3$. Since $\phi\pi_1\simeq_{H^S}\psi\pi_3$, then $\phi\pi_1\simeq_{H^{S'}}\psi\pi_3$ and the following diagram commutes up to $S'$-homotopy:
$$ \xymatrix{
\cP_S  \ar@/_/[ddr]_{\pi_3} \ar@/^/[drr]^{\pi_1}
\ar@{.>}[dr]|-{\xi} \\
& \ar@{}[dr] |{\Downarrow_{S'}}\cP_{S'} \ar[d]_{\pi'_3} \ar[r]^{\pi'_1}
& \cK \ar[d]_{\phi}\\
& \cJ \ar[r]^{\psi}& \G }$$
\end{enumerate}\end{remark}

\begin{prop}\label{order}
If there is a subdivision $S$ such that the following diagram commutes up to $S$-homotopy:
$$ \xymatrix{
 \ar@{}[dr]^{\simeq_{S}}\cK \ar[dr]_{\e} \ar[r]^{\eta} &\G \\
 & \cL \ar[u]_{h}}$$
 where $\e$ is an essential equivalence and $h$ is a homotopy equivalence, then
$\eta: \cK \to \G$ is an essential homotopy equivalence. 
\end{prop}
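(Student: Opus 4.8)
The plan is to transform the given factorization $\eta\simeq_S h\e$ (an essential equivalence followed by a homotopy equivalence) into one of the shape demanded by the definition of essential homotopy equivalence (a homotopy equivalence followed by an essential equivalence), by pulling $\e$ back along a homotopy inverse of $h$. Concretely: since $h\colon\cL\to\G$ is a homotopy equivalence, after possibly refining $S$ there is a morphism $g\colon\G\to\cL$ with $gh\simeq_S\id_\cL$ and $hg\simeq_S\id_\G$. I would then form the groupoid homotopy pullback $\cL'$ of $\e\colon\cK\to\cL$ and $g\colon\G\to\cL$ (over $\cL$), with projections $p\colon\cL'\to\cK$ and $q\colon\cL'\to\G$ and the structural $S$-homotopy $\e p\simeq_S g q$ coming from the construction of the homotopy pullback. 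This requires one of $\e,g$ to be homotopic to a submersion on objects (Remark \ref{manifold}); this is a standard adjustment that I would make at the outset and not dwell on.

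The two structural facts I would invoke — proving them as separate lemmas if they are not already on record — are the stability of both classes of maps under homotopy pullback: (a) the homotopy pullback of an essential equivalence is an essential equivalence, so $q\colon\cL'\to\G$ is an essential equivalence; and (b) the homotopy pullback of a homotopy equivalence is a homotopy equivalence, so $p\colon\cL'\to\cK$ is a homotopy equivalence. For (a), fully faithfulness of $q$ follows from that of $\e$ through the pullback square in condition (ii) for an essential equivalence (an arrow of $\cL'$ lying over a prescribed arrow of $\G$ is uniquely determined, because $\e$ is fully faithful), while essential surjectivity of $q$ follows from that of $\e$ together with the submersivity of evaluation at $1$ on the path space $P_S(\G)$; for (b) one uses the homotopy-invariance of the homotopy pullback in each variable. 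Let $h'\colon\cK\to\cL'$ be a homotopy inverse of $p$, so that $p h'\simeq_S\id_\cK$.

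It then remains to chase homotopies, freely refining the subdivision (harmless, since enlarging the subdivision preserves the relation) and using that $\simeq_S$ is stable under pre- and post-composition:
$$\eta\;\simeq_S\;h\e\;\simeq_S\;h\,\e\, p\, h'\;\simeq_S\;h\, g\, q\, h'\;\simeq_S\;q\, h',$$
where the second step uses $\id_\cK\simeq_S p h'$, the third uses the structural homotopy $\e p\simeq_S g q$, and the last uses $h g\simeq_S\id_\G$. Putting $\e':=q$ (an essential equivalence) and keeping $h'\colon\cK\to\cL'$ (a homotopy equivalence), we obtain $\eta\simeq_S\e' h'$, which is exactly the definition of an essential homotopy equivalence.

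I expect the \emph{main obstacle} to be the first two paragraphs: arranging that the homotopy pullback $\cL'$ is a genuine Lie groupoid (the submersion bookkeeping) and, above all, establishing fact (a) — that essential equivalences are stable under homotopy pullback at the Lie-groupoid level. The fully faithful part is essentially formal, but keeping the submersion hypotheses under control so that essential surjectivity of $q$ really holds is the delicate point. Once (a) and (b) are in hand, the rest is routine manipulation of strict homotopies.
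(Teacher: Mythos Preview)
Your argument is correct and follows the same overall strategy as the paper (pull back $\e$ along a homotopy inverse $g$ of $h$, then chase homotopies), but the paper makes one different choice that eliminates exactly the obstacle you flag. Instead of the groupoid \emph{homotopy} pullback, the paper forms the ordinary \emph{fibered product} $\G\times_{\cL}\cK$ along $\e$ and $g$. Since $\e$ is an essential equivalence, stability of essential equivalences under fibered product is the standard fact, so your fact~(a) is free and the ``submersion bookkeeping'' disappears. For the homotopy-equivalence side, the paper does not invoke a general stability lemma either: the universal property furnishes $\xi\colon\cK\to\G\times_{\cL}\cK$ with $\e'\xi\simeq_S\eta$ and $g'\xi\simeq_S\id$, and then the identity $\xi g'\simeq_S\id$ is obtained by a short chain of homotopies together with the cancellation property of Proposition~\ref{BF4} (from $\e'\simeq_S\eta g'\simeq_S\e'\xi g'$). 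Thus the paper's route trades your two black-box stability lemmas for one direct computation using~\ref{BF4}; your route works but requires proving stability of essential equivalences under the homotopy pullback, which the paper never needs (and the homotopy-equivalence stability you invoke is only recorded later, as Lemma~\ref{above}).
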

\begin{proof} Let $g$ be the homotopic inverse of $h$. Then, the following triangle commutes up to $S$-homotopy:
 
$$ \xymatrix{
 \G\ar@{}[dr]_{\simeq_{S}} \ar[dr]^{\id} \ar[d]_{g} &\\
\cL \ar[r]_{h}& \G }$$ 

Consider the following fibered product of groupoids:

$$ \xymatrix{
 \ar@{}[dr] |{\sim}\G\times_{\cL}\cK \ar[d]_{g'} \ar[r]^{\e'}&\G \ar[d]^{g}\\
 \cK \ar[r]_{\e}& \cL }$$
 
 The square above commutes up to a natural transformation, then it commutes up to $S$-homotopy and has the universal property. Since $\eta\simeq_S h\e$, we have that the following diagram commutes up to $S$-homotopy:
 
 $$ \xymatrix{
\cK \ar@/_/[ddr]_{\id} \ar@/^/[drr]^{\eta}\ar@{.>}[dr]|-{\xi} \\
& \ar@{}[dr] |{}\G\times_{\cL}\cK \ar[d]_{g'} \ar[r]^{\e'}&\G \ar[d]^{g}\ar[dr]^{\id}&\\
 &\cK \ar[r]_{\e}& \cL  \ar[r]_{h}&\G}$$
 Therefore, $\e'\xi\simeq_{S}\eta$ and $g'\xi\simeq_{S}\id$. We have that: 
 $$\eta g'\simeq_Sh \e g' \simeq_Sh g \e' \simeq_S \id \e' \simeq_S \e'$$ and also:
 $$\eta g' \simeq_S \eta \id_{\cK} g' \simeq_S \eta g' \xi g' \simeq_S \e' \xi g'.$$
 By proposition \ref{BF4}, we have that $\id\simeq_{S}\xi g'$ and $\xi$ is the homotopic inverse of $g'$. Then $\eta\simeq_{S}\e' h'$ with $\e'$ and essential equivalence and $h'=\xi$ a homotopy equivalence.
\end{proof}

\begin{lemma}\label{above}
If $\e$ is an essential equivalence and $g$ is an $S$-homotopy equivalence, then the homotopic pullback 
\[\xymatrix{ \cP_S
 \ar[r]^{\e'} \ar[d]_{g'}& \cJ \ar[d]^{g} \\ 
\cL\ar[r]^{\e} & \G}\]
exists and $g'$ is an $S$-homotopy equivalence as well.
\end{lemma}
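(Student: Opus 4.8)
The statement has two parts — that $\cP_S$ exists as a Lie groupoid, and that the projection $g'\colon\cP_S\to\cL$ is again an $S$-homotopy equivalence — and I would treat them in that order. For existence I would first unwind the definition: $(\cP_S)_0=L_0\times_{G_0}^{h}G^{n-1}\times_{G_0}^{h}J_0$ and $(\cP_S)_1=L_1\times_{G_0}^{h}G^{n-1}\times_{G_0}^{h}J_1$ are assembled from the iterated (homotopy) pullbacks described above. All but one of the constituent pullbacks are ordinary pullbacks along the source and target maps $s,t\colon G_1\to G_0$, which are submersions, so Remark~\ref{manifold} applies to those at once; the single genuine homotopy pullback is the stage gluing the $\cL$-side to the $\cJ$-side over $G_0$. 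To see it is a manifold I would use the essential equivalence $\e$: condition~(i) says $t\pi_1\colon G_1\times_{G_0}L_0\to G_0$ is a surjective submersion, so after replacing $\cL$ by the strongly equivalent ``fattened'' groupoid whose object manifold is $G_1\times_{G_0}L_0$ with structure map $t\pi_1$ to $\G$ — a harmless move, since it changes $g'$ only by post-composition with a strong equivalence — one leg of that homotopy pullback becomes a submersion on objects, and Remark~\ref{manifold} makes it a manifold. This realizes $\cP_S$ as a Lie groupoid fitting into a square commuting up to the explicit $S$-homotopy $H^S$ displayed above, with the stated universal property.

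\emph{The projection $g'$.} From here only the hypothesis that $g$ is an $S$-homotopy equivalence enters the formal argument. I would pick an $S$-homotopy inverse $\bar g\colon\G\to\cJ$ of $g$, with $g\bar g\simeq_S\id_\G$ and $\bar g g\simeq_S\id_\cJ$ (refining $S$ if necessary), and feed the universal property of $\cP_S$ the cone with apex $\cL$ whose leg into $\cJ$ is $\bar g\,\e$ and whose leg into $\cL$ is $\id_\cL$. Since $g(\bar g\,\e)\simeq_S\e=\e\,\id_\cL$, this produces $\zeta\colon\cL\to\cP_S$ with $\e'\zeta\simeq_S\bar g\,\e$ and $g'\zeta\simeq_S\id_\cL$, so $\zeta$ is a right homotopy inverse of $g'$.

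For the other side I would compose $\zeta g'$ with $\e'$ and use that the square commutes up to $S$-homotopy (hence $\e g'\simeq_S g\e'$) together with $\bar g g\simeq_S\id_\cJ$:
\[
\e'(\zeta g')=(\e'\zeta)g'\simeq_S\bar g\,\e\,g'\simeq_S\bar g\,g\,\e'\simeq_S\e'=\e'\,\id_{\cP_S}.
\]
Now $\e'\colon\cP_S\to\cJ$ is the (homotopy) pullback of the essential equivalence $\e$, hence an essential equivalence and in particular an essential homotopy equivalence, so Proposition~\ref{BF4} yields $\zeta g'\simeq_S\id_{\cP_S}$; therefore $g'$ is an $S$-homotopy equivalence with homotopy inverse $\zeta$. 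If one prefers not to invoke anything about $\e'$, the same conclusion follows from the uniqueness-up-to-homotopy clause of the universal property of $\cP_S$ after also checking $g'(\zeta g')\simeq_S g'$.

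\emph{Main obstacle.} The homotopy bookkeeping above is routine; the real work — and the only place the hypothesis on $\e$ is used essentially — is (1) producing the finite-dimensional smooth model of $\cP_S$, i.e.\ justifying the fattening of $\cL$ and checking that the resulting iterated (homotopy) pullback carries the correct Lie groupoid structure, which rests on condition~(i) together with $s,t$ being submersions; and, along the route through Proposition~\ref{BF4}, (2) the fact that the homotopy-pullback projection $\e'$ is again an essential equivalence, i.e.\ stability of essential equivalences under homotopy pullback, which must be checked against the explicit descriptions of $(\cP_S)_0$ and $(\cP_S)_1$. A minor recurring nuisance is that chaining the homotopies above can force passing to a refinement $S'\supset S$; this is absorbed via the comparison morphism $\cP_S\to\cP_{S'}$ recorded in the earlier remark.
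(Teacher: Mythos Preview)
Your argument is correct and follows the same overall strategy as the paper: produce a one-sided homotopy inverse of $g'$ from the universal property of the homotopy pullback, then close the other side by a chain of $S$-homotopies ending in an application of Proposition~\ref{BF4} with~$\e'$. The only difference is in how the candidate inverse is built. The paper stacks a second homotopy pullback $\cP'_S$ (of $\e'$ along the homotopy inverse $f$ of $g$) on top of $\cP_S$, invokes the universal property of the composite outer square to get $\xi\colon\cL\to\cP'_S$, and takes $f'\xi\colon\cL\to\cP_S$ as the inverse; the ensuing computation $\e'f'\xi g'\simeq_S f\e''\xi g'\simeq_S f\e g'\simeq_S fg\e'\simeq_S\e'$ is line-for-line your chain with $f'\xi$ in place of $\zeta$. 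Your direct construction of $\zeta$ from the cone $(\bar g\,\e,\id_\cL)$ on $\cP_S$ itself is a genuine streamlining that bypasses the auxiliary $\cP'_S$ entirely. Both arguments rely on $\e'$ being an essential homotopy equivalence in order to invoke Proposition~\ref{BF4}; the paper does not justify this step explicitly either, so your flagging of it as the point needing care is apt.
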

\begin{proof}
Since $\e$ is an essential equivalence, the pullback  $\cP_S$ exists. Let $f\colon \G \to \cJ$ be the homotopic inverse of $g$, then $fg \simeq_S \id_{\cJ}$ and $gf \simeq_S \id_{\G}$. Consider the following pullbacks:

\[\xymatrix{ 
\cP'_S \ar[r]^{\e''} \ar[d]_{f'}& \G \ar[d]^{f} \\ 
\cP_S \ar[r]^{\e'} \ar[d]_{g'}& \cJ \ar[d]^{g} \\ 
\cL\ar[r]^{\e} & \G}\]
By the universal property of the large square, there exists $\xi\colon \cL \to \cP'_S$ such that the following diagram commutes up to $S$-homotopy:

 $$ \xymatrix{
\cL \ar@/_/[ddr]_{\id} \ar@/^/[drr]^{\e}\ar@{.>}[dr]|-{\xi} \\
& \ar@{}[dr] |{}\cP'_S \ar[d]^{g'f'} \ar[r]^{\e''}&\G \ar[d]^{\id}\\
 &\cL \ar[r]_{\e}& \G  }$$
Then $g'f'\xi  \simeq_S \id$ and $\e''\xi  \simeq_S \e$. We will see that $f'\xi$ is the homotopic inverse of $g'$. We have that
$$\e' f'\xi g'  \simeq_S f\e''\xi g' \simeq_S f\e g'  \simeq_S fg\e'  \simeq_S \e'.$$
By proposition \ref{BF4}, $f'\xi g'  \simeq_S \id$ and $g'$ is a homotopy equivalence.
\end{proof}

\begin{prop} \label{BF2}
If $\eta$ and $\nu$ are essential homotopy equivalences, then $\nu\eta$ is an essential homotopy equivalence as well.
\end{prop}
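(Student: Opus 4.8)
The plan is to bring the composite $\nu\eta$ into the normal form ``an essential equivalence after a homotopy equivalence'', which is precisely the definition of an essential homotopy equivalence; the only subtlety is a single place where the two kinds of map will occur in the wrong order, and Proposition~\ref{order} is tailor-made to fix exactly that.

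First I would write $\eta\colon\cK\to\G$ and $\nu\colon\G\to\cH$ and unpack the hypotheses: there are a homotopy equivalence $h\colon\cK\to\cL$ and an essential equivalence $\e\colon\cL\to\G$ with $\eta\simeq_S\e h$, and a homotopy equivalence $h'\colon\G\to\cL'$ and an essential equivalence $\e'\colon\cL'\to\cH$ with $\nu\simeq_{S}\e' h'$; here I have already replaced the two subdivisions by a common refinement, which is legitimate by the remark that $\phi\simeq_{H^S}\psi$ implies $\phi\simeq_{H^{S'}}\psi$ whenever $S'\supset S$. Whiskering these two homotopies then gives $\nu\eta\simeq_S\e'h'\e h$.

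The key step is to examine the middle composite $h'\e\colon\cL\to\cL'$, which is a homotopy equivalence composed after an essential equivalence. Proposition~\ref{order}, applied with $h'\e$ in the role of $\eta$, shows that $h'\e$ is itself an essential homotopy equivalence, so there are a homotopy equivalence $h_1\colon\cL\to\cN$ and an essential equivalence $\e_1\colon\cN\to\cL'$ with $h'\e\simeq_{S}\e_1h_1$ (refining $S$ again if needed). Substituting, $\nu\eta\simeq_S\e'\e_1h_1h$. Finally I would note that $\e'\e_1\colon\cN\to\cH$ is an essential equivalence, being a composite of essential equivalences \cite{MM}, and that $h_1h\colon\cK\to\cN$ is a homotopy equivalence --- if $g$ and $g_1$ are homotopy inverses of $h$ and $h_1$ then $gg_1$ is a homotopy inverse of $h_1h$, since the $S$-homotopy relation is compatible with composition. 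Thus $\nu\eta\simeq_S(\e'\e_1)(h_1h)$ is the desired factorization and $\nu\eta$ is an essential homotopy equivalence.

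The only genuine input is Proposition~\ref{order}; the main thing to be careful about is the bookkeeping with subdivisions and the repeated whiskering of homotopies. As an alternative to Proposition~\ref{order} one can move $\e$ past $h'$ directly: using Lemma~\ref{above}, form the homotopy pullback $\cP_S$ of $\e\colon\cL\to\G$ along a homotopy inverse $g$ of $h'$, which provides a homotopy equivalence $g'\colon\cP_S\to\cL$ and (as the homotopy pullback of an essential equivalence) an essential equivalence $\e_1\colon\cP_S\to\cL'$ with $\e g'\simeq_S g\e_1$; then, with $f'$ a homotopy inverse of $g'$, $h'\e\simeq_S h'\e g'f'\simeq_S h'g\e_1f'\simeq_S\e_1f'$, reaching the same normal form.
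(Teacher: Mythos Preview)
Your argument is correct and, in substance, the same as the paper's. The paper also reduces to rewriting the middle composite $f\e$ (your $h'\e$) in the form ``essential equivalence after homotopy equivalence''; it does this directly via the homotopy pullback and Lemma~\ref{above}, which is exactly your alternative route, while your primary route packages that same pullback construction by citing Proposition~\ref{order}. Either way one arrives at $\nu\eta\simeq_S(\sigma\e')(h'h)$ with $\sigma\e'$ an essential equivalence and $h'h$ a homotopy equivalence.
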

\begin{proof}
We have that $\eta \simeq_{S'} \e h$ and $\nu \simeq_{S''} \sigma f$ with $\e$ and $\sigma$ being essential equivalences and $h$ and $f$ homotopy equivalences. Consider the following homotopic pullback, where $S$ is a refinement of $S$ and $S'$:

\[\xymatrix{ \cP_S
 \ar[r]^{g'} \ar[d]_{\e'}& \cA \ar[d]^{\e} \\ 
\cB\ar[r]^{g} & \G}\]
where $g$ is the homotopic inverse of $f$. By lemma \ref{above}, we have that $g'$ is a homotopy equivalence. Then, the following diagram commutes up to $S$-homotopy:

\[
\xymatrix@!=1pc{\cK\ar[dr]_{h}\ar[rr]^{\eta}&&\G\ar[rr]^{\nu}&&\cL\\
&\cA\ar[ur]_{\e}&&\cB\ar[ur]_{\sigma}\ar[ul]^{g}&\\
&&\cP_S \ar[ul]^{g'}\ar[ur]_{\e'}&&
}
\]
Let $h'$ be the homotopic inverse of $g'$. Then, $\nu\eta \simeq_{S} \sigma \e' h' h$ where $\sigma \e'$ is an essential equivalence and $h h'$ is a homotopy equivalence.
\end{proof}
We can pull back the decomposition given in proposition \ref{order} and we have the following

\begin{prop} \label{BF3}
If $\nu: \cK \to \G$ is an essential homotopy equivalence, then there is a subdivision $S$ such that the morphism $\pi_3: \cP_S \to \cJ$ is an essential homotopy equivalence as well.
\end{prop}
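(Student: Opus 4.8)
The plan is to pull the factorization of $\nu$ coming from the definition of essential homotopy equivalence (and Proposition~\ref{order}) back along $\psi$, and then assemble the pieces with Proposition~\ref{BF2}. Recall that here $\cP_S=\cK\times_S^h\cJ$ is the groupoid homotopy pullback of $\nu\colon\cK\to\G$ and a fixed $\psi\colon\cJ\to\G$, with projections $\pi_1\colon\cP_S\to\cK$ and $\pi_3\colon\cP_S\to\cJ$, and that by construction $\nu\pi_1\simeq_S\psi\pi_3$.

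First I would invoke the definition: there are a subdivision $S_0$, an $S_0$-homotopy equivalence $h\colon\cK\to\cL$, and an essential equivalence $\e\colon\cL\to\G$ with $\nu\simeq_{S_0}\e h$. Choose $S$ refining $S_0$ (it will be refined further, finitely often). Form the groupoid homotopy pullback $\cQ$ of $\e$ along $\psi$, with projections $\e'\colon\cQ\to\cL$ and $\bar\psi\colon\cQ\to\cJ$; as in Lemma~\ref{above} this exists because $\e$ is an essential equivalence. The first key point is that $\bar\psi$ is an essential homotopy equivalence. For this I would compare $\cQ$ with the weak ($2$-categorical) pullback $\cN$ of $\e$ and $\psi$: since an essential equivalence may be taken to be a surjective submersion on objects (cf.\ the remark after the definition of Morita equivalence and Remark~\ref{manifold}), $\cN$ is a Lie groupoid and $\cN\to\cJ$ is an essential equivalence by the standard stability of essential equivalences under pullback \cite{MM}. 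The canonical comparison $\iota\colon\cN\to\cQ$ inserting constant $\G$-paths is an $S$-homotopy equivalence: a homotopy inverse is given by truncating the $\G$-path coordinate, and the homotopy $\iota\circ(\text{inverse})\simeq_S\id_\cQ$ is realized by the family that slides each object of $\cQ$ along the remaining part of its $\G$-path toward the endpoint. Hence $\bar\psi$ is, up to $S$-homotopy, an essential equivalence precomposed with a homotopy equivalence, so it is an essential homotopy equivalence by Propositions~\ref{BF1} and~\ref{BF2}.

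The second key point is that $\pi_3$ factors through $\bar\psi$ by a homotopy equivalence. Since $\nu\pi_1\simeq_S\psi\pi_3$ and $\nu\simeq_{S_0}\e h$, we get $\e(h\pi_1)\simeq_S\psi\pi_3$, so the universal property of the homotopy pullback (proved in the text for $\cP_S$ and valid verbatim for $\cQ$) yields $\Theta\colon\cP_S\to\cQ$ with $\e'\Theta\simeq_S h\pi_1$ and $\bar\psi\Theta\simeq_S\pi_3$. Because $h$ is an $S$-homotopy equivalence, $\cP_S$ and $\cQ$ are homotopy pullbacks of $S$-homotopic data, and $\Theta$ is an $S$-homotopy equivalence: a homotopy inverse $\cQ\to\cP_S$ is built from the homotopy inverse $g$ of $h$ together with the homotopy $\nu\simeq_{S_0}\e h$ (prepend to the $\G$-path coordinate the homotopy $\G$-path witnessing $\e hg\simeq\e\simeq\nu g$), and the round trips are seen to be $S$-homotopic to the identities using $gh\simeq\id$ and $hg\simeq\id$, after refining $S$. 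Then $\pi_3\simeq_S\bar\psi\Theta$ is a composite of essential homotopy equivalences, hence an essential homotopy equivalence by Proposition~\ref{BF2}; since this property is preserved under $\simeq_S$, so is $\pi_3$.

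The main obstacle is the bookkeeping in the two key points: all comparison morphisms, their homotopy inverses, and the connecting homotopies are forced by the universal property of the homotopy pullback, but each lives over some subdivision of $[0,1]$, so one must repeatedly pass to a common refinement to splice the homotopy $\G$-path of $\nu\simeq_{S_0}\e h$ (and those for $gh\simeq\id$, $hg\simeq\id$) onto the $\G$-path coordinate of $\cP_S$ while keeping the projections homotopy-compatible. Making precise that pulling an essential equivalence back along an arbitrary morphism yields an essential homotopy equivalence for the $S$-homotopy pullback — rather than the strict pullback, where it is an essential equivalence outright — is the other delicate point; the comparison with the weak pullback together with the $\G$-path truncation homotopy is the cleanest route.
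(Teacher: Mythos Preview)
Your approach is correct and is precisely what the paper has in mind: the paper's ``proof'' is the single sentence ``We can pull back the decomposition given in proposition~\ref{order},'' and you have carried out exactly that program---factor $\nu\simeq_{S_0}\e h$, pull back $\e$ along $\psi$ to get $\cQ\to\cJ$, and then compare $\cP_S$ with $\cQ$ via the universal property. The two points you flag as delicate (that the $S$-homotopy pullback of an essential equivalence is only an essential \emph{homotopy} equivalence, handled by your comparison with the weak pullback and the path-truncation homotopy; and the subdivision bookkeeping needed to splice the various witnessing $\G$-paths) are genuine and are exactly the details the paper suppresses, so your write-up is considerably more complete than the original.
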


\begin{definition}\label{mhe}
A Lie groupoid $\cK$ is {\it Morita homotopy equivalent} to $\G$ if there exists a Lie groupoid $\cL$ and essential homotopy equivalences
\[\cK\overset{\eta}{\gets}\cL\overset{\nu}{\to}\G.\]
\end{definition}

Note that it is possible to choose these essential homotopy equivalences being homotopic to surjective submersions on objects.

By using two homotopy pullbacks of groupoids and proposition \ref{BF3}, it is easy to see that Morita homotopy equivalence is a relation of equivalence.

\section{The Morita homotopy bicategory of Lie groupoids}
We will introduce in this section a bicategory whose objects are Lie groupoids and where Morita homotopy equivalence amounts to equivalence of objects. We follow a bicategorical approach as emphasized by Landsman in \cite{La1,La}.
 \subsection{Bicategories of fractions} 
Recall that a bicategory $\BB$ consists of a class of objects, morphisms between objects and 2-morphisms between morphisms together with various ways of composing them. 
 A 2-morphism is a {\it 2-equivalence }  if it is invertible in $\BB$ whereas a morphism  is an {\it equivalence } if it is invertible up to a 2-equivalence in $\BB$. We have that 
 $H\colon \phi\st{\sim}\rr\psi$ is a 2-equivalence in $\BB$ if there exists $G$ such that $HG=\id$ and $GH=\id$, and $\phi\colon \cK\st{\sim}\r\G$ is an equivalence in $\BB$ if there exists $\psi$ such that $\phi\psi\st{\sim}\rr\id$ and $\psi\phi\st{\sim}\rr\id$.

 We can compose 2-morphisms in two ways called horizontal and vertical composition.Vertical composition is strictly associative whereas horizontal composition is only associative up to 2-equivalence (associator). The unit laws for morphisms hold up to 2-equivalences (left and right unit constrains). Associator and unit constrains are required to be natural with respect to their arguments and verify certain axioms \cite{Be}.

 Given a bicategory $\BB$ and a subset $A\subset B_1$ of morphisms satisfying certain conditions, there exists a bicategory $\BB(A^{-1})$ having the same objects as $\BB$ but inverse morphisms of morphisms in $A$ have been added as well as more 2-morphisms. This new bicategory is called a {\it bicategory of fractions} of $\BB$ with respect to $A$ and was constructed by Pronk in \cite{P}. The bicategory of fractions $\BB(A^{-1})$ is characterized by the universal property that any morphism $F:\BB\to\DD$ sending elements of $A$ into equivalences factors in a unique way as $F=\tilde F\circ U$
$$         
\xymatrix{
\BB \ar^{F}[d] \ar^{U\;\;\;\;}[r] & \BB(A^{-1})\ar^{\tilde F}[ld] \\ 
\DD & } 
$$
The following conditions are needed on $A$ to admit a bicalculus of right fractions  \cite{P}:
\begin{enumerate}
\item[BF1] All equivalences are in $A$.
\item[BF2] If $\phi$ and $\psi$ are in $A$, then $\phi\psi\in A$.
\item[BF3] For all $\eta:\cJ\to\G$ and $\phi:\cK\to\G$ with $\eta\in A$ there exists an object $\cP$ and morphisms $\nu:\cP\to\cK$ and $\psi:\cP\to\cJ$ with $\nu\in A$ such that the following square commutes up to a 2-equivalence:
$$         
\xymatrix{
\cP \ar^{\nu}[d] \ar^{\psi}[r] & \cJ\ar^{\eta}[d] \\ 
\cK \ar^{\phi}[r] & \G } 
$$
\item[BF4] If $H:\eta\phi\ri\eta\varphi$ is a 2-morphism with $\eta\in A$ then there exists a morphism $\nu\in A$ and a 2-morphism $F:\phi\nu\ri\varphi\nu$ such that $H\cdot\nu=\eta\cdot F$.
\item[BF5] If there is a 2-equivalence $\eta\st{\sim}\rr\nu$ with $\nu\in A$ then $\eta\in A$.
\end{enumerate}

\begin{prop} The set of Lie groupoids, morphisms and (homotopy classes of) strong homotopies as 2-morphisms forms a bicategory $\HH$. 
\end{prop}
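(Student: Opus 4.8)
The plan is to verify directly that the data of Lie groupoids (objects), strict morphisms (1-morphisms), and homotopy classes of strict homotopies $H^S$ (2-morphisms) satisfies the axioms of a bicategory in the sense of B\'enabou \cite{Be}. I would first pin down precisely what a 2-morphism is: given two parallel morphisms $\phi,\psi\colon\cK\to\G$, a 2-morphism $\phi\Rightarrow\psi$ is an equivalence class of homotopies $H^S\colon\cK\times\I_S\to\G$ with $H^S_{\mathbf 0}=\phi$, $H^S_{\mathbf 1}=\psi$, where two such are identified if they become equal after passing to a common refinement of the subdivisions and after the natural ``reparametrization/concatenation'' identifications. The reason to pass to homotopy classes (rather than homotopies on the nose) is exactly so that vertical composition becomes strictly associative and unital: vertical composition of $H^S\colon\phi\Rightarrow\psi$ and $K^{S'}\colon\psi\Rightarrow\chi$ is defined by concatenating the subdivisions $S$ and $S'$ (rescaled into $[0,1]$) and gluing $H^S$ and $K^{S'}$ along $\psi$; the identity 2-morphism on $\phi$ is the constant homotopy over the trivial subdivision $\{0,1\}$ composed trivially. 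With the equivalence relation in place, associativity and the unit laws for vertical composition hold strictly, which is the whole point of the construction.

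Next I would define horizontal composition. Given $H^S\colon\phi\Rightarrow\psi$ between $\cK\to\G$ and $K^{S'}\colon\alpha\Rightarrow\beta$ between $\G\to\cL$, I form a common refinement $S''$ of $S$ and $S'$, and define $K^{S'}\ast H^S\colon\alpha\phi\Rightarrow\beta\psi$ as the composite $\cK\times\I_{S''}\xrightarrow{H^{S''}\times\mathrm{id}}\G\times\I_{S''}\xrightarrow{K^{S''}}\cL$ (using that a homotopy over $S$ extends to any refinement, as noted in the remark following Proposition~\ref{inj}). One checks this is independent of the chosen refinement up to the equivalence relation, hence well defined on homotopy classes. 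I would then record the interchange law relating horizontal and vertical composition: for composable configurations it holds up to the same refinement identification, so after passing to homotopy classes it holds strictly. The associativity of horizontal composition of 1-morphisms (composition of functors) is strict, so the associator is the identity; similarly the left and right unit 1-morphisms are the strict identity functors and the unitors are identities. Thus $\HH$ is in fact a strict 2-category once one works with homotopy classes, and a fortiori a bicategory, so the coherence axioms (pentagon and triangle) are trivially satisfied.

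The steps, in order: (1) define the 2-morphisms as homotopy classes and specify the equivalence relation via common refinements; (2) define vertical composition by concatenation of subdivisions and verify strict associativity and unitality on classes; (3) define horizontal composition via refinement and the extension-to-refinement property, and check well-definedness on classes; (4) verify the interchange law and the (trivial) coherence for associator and unitors; (5) conclude. I expect the main obstacle to be step (1)--(2): getting the equivalence relation on homotopies exactly right so that vertical composition is genuinely associative and the constant homotopies act as strict identities. The subtlety is that concatenation of paths (here, of $\I_S$-homotopies) is only associative up to reparametrization, so one must quotient by enough identifications (reparametrizations of $[0,1]$ respecting the subdivision structure, together with passage to refinements) without collapsing too much; this is the analogue of the usual care needed in defining the fundamental groupoid. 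Everything after that is essentially the observation that composition of functors is strictly associative, so the higher coherence data is trivial.
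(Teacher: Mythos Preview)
Your proposal is correct and follows essentially the same approach as the paper: vertical composition is defined by concatenating subdivisions (rescaled into $[0,1]$), and one passes to homotopy classes precisely to force associativity of this concatenation. The paper's own proof is a two-line sketch that records only these two points and leaves the rest implicit, so your account is considerably more detailed than what appears there; in particular the paper says nothing about horizontal composition, interchange, or the fact that the resulting structure is actually a strict $2$-category. One small notational slip: in your horizontal composition the map $\cK\times\I_{S''}\to\G\times\I_{S''}$ should be $(H^{S''},\mathrm{pr}_{\I_{S''}})$ rather than $H^{S''}\times\mathrm{id}$, but the intended construction is clear and correct.
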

\begin{proof}
Strong homotopies $H^S\colon \cK\times\I_S\to\G$ and $H'^{S'}\colon \cK\times\I_{S'}\to\G$ can be composed vertically by taking the subdivision $S''=\{0=r_0\le {r_1\over 2}\le\cdots\le  {r_n\over 2}, {r'_0+1\over 2},  \le\cdots\le {r'_m+1\over 2}=1\}$. We take homotopy classes of homotopies to assure associativity of the vertical composition.
\end{proof}
We will formally invert the set of essential homotopy equivalences $W$. 
\begin{prop}
The set  $W$ of essential homotopy equivalences allows a bicalculus of right fractions.
\end{prop}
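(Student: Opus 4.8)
The plan is to verify the five conditions BF1--BF5 for the class $W$ of essential homotopy equivalences inside the bicategory $\HH$ of Lie groupoids, strict morphisms, and (homotopy classes of) strong homotopies, relying heavily on the results already established in Section~3. Conditions BF1 and BF2 are exactly Propositions~\ref{BF1} and \ref{BF2}: an essential equivalence and a (strong) homotopy equivalence are each essential homotopy equivalences, so in particular every equivalence in $\HH$ lies in $W$ (note that an equivalence in $\HH$ is invertible up to $S$-homotopy, hence in particular a homotopy equivalence, hence in $W$); and $W$ is closed under composition by Proposition~\ref{BF2}. Condition BF4 is precisely Proposition~\ref{BF4}, once one unwinds the statement: there the hypothesis ``$\eta\phi\simeq_S\eta\psi$'' and the conclusion ``$\phi\simeq_S\psi$'' give, together with the identity morphism playing the role of $\nu$, the required 2-morphism $F$ with $H\cdot\nu=\eta\cdot F$ (here one can take $\nu=\id$ since the left-cancellation of essential homotopy equivalences holds on the nose). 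Condition BF5 follows formally: if $\eta\simeq_S\nu$ with $\nu\in W$, then $\nu\simeq_S\e h$ with $\e$ an essential equivalence and $h$ a homotopy equivalence, and composing the $S$-homotopy $\eta\simeq_S\nu\simeq_S\e h$ shows $\eta$ admits the same factorization up to $S$-homotopy, hence $\eta\in W$ by definition.

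The substantive step is BF3, the existence of a homotopy pullback square: given $\eta\colon\cJ\to\G$ in $W$ and an arbitrary $\phi\colon\cK\to\G$, one must produce $\cP$, a morphism $\nu\colon\cP\to\cK$ in $W$, and $\psi\colon\cP\to\cJ$ with the square commuting up to a 2-equivalence (strong $S$-homotopy). Here I would use the \emph{groupoid homotopy pullback} $\cP_S=\cK\times_S^h\cJ$ constructed in Section~3. First, by the remark following Definition~\ref{mhe} one may assume (replacing $\eta$ up to homotopy) that $\eta$ is homotopic to a surjective submersion on objects, which by Remark~\ref{manifold} guarantees that the iterated homotopy pullbacks of manifolds defining $(\cP_S)_0$ and $(\cP_S)_1$ are genuine manifolds, so $\cP_S$ is a bona fide Lie groupoid. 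The square
$$\xymatrix{ \cP_S \ar[r]^{\pi_1} \ar[d]_{\pi_3}& \cK \ar[d]^{\phi} \\ \cJ\ar[r]^{\eta} & \G}$$
commutes up to the strong $S$-homotopy $H^S((x,\sigma,y),(r,i,j))=\sigma((r,i,j))$ exhibited in Section~3, and this square satisfies the universal property recorded there. It remains to check that $\pi_1\colon\cP_S\to\cK$ lies in $W$; this is the content of Proposition~\ref{BF3} (applied with the roles of $\pi_1,\pi_3$ interchanged, i.e. using the symmetric version of the homotopy pullback), which asserts precisely that when $\eta$ is an essential homotopy equivalence the projection from $\cP_S$ to the other factor is again an essential homotopy equivalence, for a suitable choice of subdivision $S$. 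So one takes $\cP=\cP_S$, $\nu=\pi_1$, $\psi=\pi_3$.

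The main obstacle I anticipate is purely bookkeeping rather than conceptual: making sure the subdivisions match up. Each of Propositions~\ref{BF1}--\ref{BF4} and the homotopy-pullback construction carries its own subdivision $S$, $S'$, $S''$, and the vertical/horizontal composition of 2-morphisms in $\HH$ forces one to refine these to a common subdivision (as in the proof that $\HH$ is a bicategory, where one passes to $S''$). One must check that ``essential homotopy equivalence'' is stable under refinement of the subdivision --- which follows from the remark that $\phi\simeq_{H^S}\psi$ implies $\phi\simeq_{H^{S'}}\psi$ for $S'\supset S$, together with the fact that a homotopy equivalence over $S$ is one over any refinement --- and similarly that the homotopy pullback $\cP_S$ maps compatibly to $\cP_{S'}$. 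Once this stability is in hand, the five conditions assemble exactly as above, and Pronk's theorem in \cite{P} then produces the bicategory of fractions $\HH(W^{-1})$.
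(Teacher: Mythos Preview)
Your proposal is correct and follows essentially the same approach as the paper: the paper's proof simply cites Propositions~\ref{BF1}, \ref{BF2}, \ref{BF3}, \ref{BF4} for conditions BF1--BF4 and gives the one-line argument for BF5 that you reproduce. Your treatment is in fact more detailed than the paper's---you spell out why the homotopy pullback $\cP_S$ exists as a Lie groupoid (via the submersion-on-objects replacement and Remark~\ref{manifold}) and flag the subdivision-refinement bookkeeping, both of which the paper leaves implicit.
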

\begin{proof}
Condition BF1 follows from proposition \ref{BF1}. Condition BF2 follows from proposition \ref{BF2}. Condition BF3 follows from proposition \ref{BF3}. Condition BF4 follows from proposition \ref{BF4}.
Finally, if $\eta\ri\nu$ is a strong homotopy equivalence and $\eta$ is an essential homotopy equivalence, then $\nu \simeq_S \eta \simeq_S h \e$ and $\nu$ is an essential homotopy equivalence as well.

Therefore, there exists a bicategory of fractions $\HH(W^{-1})$ inverting the essential homotopy equivalences.
\end{proof}

\subsection{The bicategory $\HH(W^{-1})$} The objects of $\HH(W^{-1})$ are Lie groupoids. The 1-morphisms from $\cK$ to $\G$ are formed by pairs $(\eta,\phi)$
\[\cK\overset{\eta}{\gets}\cJ\overset{\phi}{\to}\G\]
such that $\eta$ is an essential homotopy equivalence. The 1-morphisms $(\eta,\phi)$ in this bicategory will be called {\it generalized homotopy maps}.
If $\e$ is an essential equivalence, then $(\e,\phi)$ is called a generalized map.

A 2-morphism from $(\eta,\phi)$ to $(\eta',\phi')$ is given by  the following diagram:
$$
\xymatrix{ & 
{\cJ}\ar[dr]^{\phi}="0" \ar[dl]_{\eta }="2"&\\
{\cK}&{\cL} \ar[u]_{u} \ar[d]^{v}&{\G}\\
&{\cJ'}\ar[ul]^{\eta'}="3" \ar[ur]_{\phi'}="1"&
\ar@{}"0";"1"|(.4){\,}="7" 
\ar@{}"0";"1"|(.6){\,}="8" 
\ar@{=>}"7" ;"8"_{H^S} 
\ar@{}"2";"3"|(.4){\,}="5" 
\ar@{}"2";"3"|(.6){\,}="6" 
\ar@{=>}"5" ;"6"^{F^S} 
}
$$
where $\cL$ is a Lie groupoid, $u$ and $v$ are essential homotopy equivalences and $F^S:\eta u\ri \eta ' v$ and $H^S:\phi u\ri \phi'v$ are strong homotopy equivalences.

The notion of {\it homotopy} we propose corresponds to 2-morphisms in this bicategory $\HH(W^{-1})$.
\begin{definition} Two generalized homotopy maps $(\eta,\phi)$ and $(\eta',\phi')$ are {\it Morita homotopic} if there is a 2-morphism between them.
\end{definition}
In this case, we write $(\eta,\phi)\simeq(\eta',\phi')$ and we say that there is a {\it Morita homotopy} between $(\eta,\phi)$ and $(\eta',\phi')$.

In particular, when $\eta$ and $\eta'$ are essential equivalences, we have a notion of homotopy for generalized maps and when they are identities, we have a notion of homotopy for strict maps.

Two objects $\cK$ and $\G$  are equivalent in $\HH(W^{-1})$  if there are morphisms $(\eta,\phi)$ from  $\cK$ to $\G$ and $(\theta,\psi)$ from $\G$ to $\cK$ such that $(\eta,\phi)\circ(\theta,\psi)$ is Morita homotopic to the identity $(\id_{\G},\id_{\G})$ and $(\theta,\psi)\circ(\eta,\phi)\simeq(\id_{\cK},\id_{\cK})$.

\begin{prop} A morphism $(\eta,\phi)$ is an equivalence in $\HH(W^{-1})$ if and only if $\phi$ is an essential homotopy equivalence. In this case, the inverse of $(\eta,\phi)$ is the morphism $(\phi,\eta)$.
\end{prop}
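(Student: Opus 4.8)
The plan is to prove the ``if'' direction by exhibiting $(\phi,\eta)$ as an explicit two-sided inverse, and then to reduce the ``only if'' direction to a saturation property of $W$. Assume first that $\phi\in W$, so that $\G\overset{\phi}{\gets}\cJ\overset{\eta}{\to}\cK$ is a genuine $1$-morphism $(\phi,\eta)\colon\G\to\cK$. I would compute $(\eta,\phi)\circ(\phi,\eta)\colon\G\to\G$ from the homotopy-pullback recipe for composing generalized homotopy maps: its apex is the homotopy pullback $\cP$ of $\eta\colon\cJ\to\cK$ against itself, with projections $p_1,p_2\colon\cP\to\cJ$ and a structural $2$-cell $\eta p_1\simeq_S\eta p_2$, and the composite span is $(\phi p_1,\phi p_2)$. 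By Proposition \ref{BF3} both $p_i$ lie in $W$, and by Proposition \ref{BF4} the relation $\eta p_1\simeq_S\eta p_2$ descends to $p_1\simeq_S p_2$, hence $\phi p_1\simeq_S\phi p_2$. Then the auxiliary groupoid $\cP$, with $u=\id_\cP$ and $v=\phi p_1$ (which lies in $W$ by Proposition \ref{BF2}), together with the constant homotopy $\phi p_1\simeq_S\phi p_1$ and the homotopy $\phi p_2\simeq_S\phi p_1$, forms a $2$-morphism from $(\phi p_1,\phi p_2)$ to $(\id_\G,\id_\G)$, which is automatically a $2$-equivalence. The symmetric computation, with $\phi\in W$ in place of $\eta\in W$ and Propositions \ref{BF3} and \ref{BF4} applied to $\phi$, yields $(\phi,\eta)\circ(\eta,\phi)\simeq(\id_\cK,\id_\cK)$. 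Thus $(\eta,\phi)$ is an equivalence in $\HH(W^{-1})$ with inverse $(\phi,\eta)$.

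For the converse, suppose $(\eta,\phi)$ is an equivalence. The case $\phi=\id_\cJ$ of the computation above shows that $(\eta,\id_\cJ)\colon\cK\to\cJ$ is an equivalence, with inverse $(\id_\cJ,\eta)$. Since $(\eta,\phi)$ is $2$-equivalent to the composite $(\id_\cJ,\phi)\circ(\eta,\id_\cJ)$ (formed against identity morphisms), the $1$-morphism $(\id_\cJ,\phi)\simeq(\eta,\phi)\circ(\id_\cJ,\eta)$ is a composite of equivalences, hence an equivalence; that is, the strict morphism $\phi$ becomes an equivalence in $\HH(W^{-1})$. It then suffices to show that any strict morphism of Lie groupoids that becomes an equivalence in $\HH(W^{-1})$ already belongs to $W$ --- i.e.\ that $W$ is saturated; granting this, $(\eta,\phi)$ being an equivalence forces $\phi\in W$, and the inverse is $(\phi,\eta)$ by the first part.

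I expect this saturation statement to be the main obstacle. The plan is to derive it from the two-out-of-three property for $W$: as $W$ is already closed under composition (Proposition \ref{BF2}), two-out-of-three reduces to the cancellation statement ``$a\in W$ and $\phi a\in W$ imply $\phi\in W$'', from which saturation follows by the standard bicategory-of-fractions argument that extracts, from a two-sided inverse of $\phi$, an auxiliary morphism $b$ lying in $W$ with $\phi b\in W$ --- the bookkeeping of which legs of the relevant homotopy pullbacks lie in $W$ being governed by Proposition \ref{BF3}. For the cancellation statement I would first reduce, using that a strong homotopy equivalence has a homotopy inverse and that $W$ is closed under composition and under $\simeq_S$, to the case where $a=\e$ is an \emph{essential equivalence}; and then, for an essential equivalence $\e$, build the homotopy pullback of $\phi$ against the essential equivalence appearing in a factorization $\phi\e\simeq_S\e'h'$ and run the universal-property argument of Proposition \ref{order}, invoking that pullbacks of essential equivalences are again essential equivalences (as in Lemma \ref{above}) to recognize the relevant projection as an essential equivalence and Proposition \ref{BF4} to cancel it. The delicate points are that essential homotopy equivalences are \emph{not} detected by the elementary conditions on orbit spaces and isotropy groups of Remark \ref{isotropy}, so one must genuinely argue through the pullback machinery, and that the maps involved need not be submersions, forcing one to work throughout with homotopy pullbacks of manifolds (Remark \ref{manifold}) in place of strict pullbacks.
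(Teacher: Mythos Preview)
The paper states this proposition without proof, presumably regarding it as a formal consequence of Pronk's general theory of bicategories of fractions; so there is no ``paper's proof'' to compare against, and your proposal already goes well beyond what the paper supplies.

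Your ``if'' direction is correct: the composite $(\eta,\phi)\circ(\phi,\eta)$ is the span $(\phi p_1,\phi p_2)$ with $p_1,p_2$ the projections from the homotopy pullback of $\eta$ against itself, Proposition~\ref{BF3} puts $p_1,p_2\in W$, Proposition~\ref{BF4} gives $p_1\simeq_S p_2$ from $\eta p_1\simeq_S\eta p_2$, and the resulting $2$-cell to $(\id_\G,\id_\G)$ is exactly as you describe. The symmetric argument with $\phi\in W$ handles the other composite.

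For the ``only if'' direction you correctly reduce to saturation of $W$ and then to the cancellation statement ``$\e$ essential equivalence and $\phi\e\in W$ imply $\phi\in W$''. The gap is in your sketch of this cancellation. Writing $\phi\e\simeq_S\e'h'$ and forming the homotopy pullback $\cP$ of $\phi$ against $\e'$, you get $\phi\pi_1\simeq_S\e'\pi_2$ with $\pi_1\in W$; but to extract a factorisation $\phi\simeq_S\e''h''$ you would need either a genuine $S$-homotopy inverse to $\pi_1$ (which an essential homotopy equivalence need not have) or a further cancellation of $\pi_1$, which is circular. The invocation of ``the universal-property argument of Proposition~\ref{order}'' does not close this: in Proposition~\ref{order} one already has a homotopy inverse $g$ of $h$ to feed into the pullback, whereas here the leg you must cancel is an essential equivalence $\e$ with no strict or homotopy inverse available. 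A correct argument has to produce, from $\e$ alone, a span $\cB\overset{\sigma}{\gets}\cE\overset{\tau}{\to}\cA$ of essential equivalences with $\e\tau\sim\sigma$ and then show that $\phi\sigma\in W$ together with $\sigma$ an essential equivalence \emph{surjective on objects} yields $\phi\in W$ via a local-section argument; alternatively, one can appeal directly to the general saturation theorem for right bicategories of fractions in~\cite{P}. As written, your sketch does not supply either route.
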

 In other words, the definition of  Morita homotopy equivalence in subsection \ref{mhe} amounts to equivalence of objects in the bicategory $\HH(W^{-1})$. So, we write $\cK\simeq\G$ for equivalence of objects in $\HH(W^{-1})$. The {\it Morita homotopy type}  of $\G$ is the class of $\G$ under the equivalence relation $\simeq$.
 
 \begin{example}\label{mht}
 The holonomy groupoid $\G=\Hol(M,\F_S)$ of the Seifert fibration on the M\"obius band has the same Morita homotopy type than the point groupoid $\cK=\bullet^{\Z_2}$ since we have
  $$\bullet^{\Z_2}\overset{\eta}{\gets}\Z_2\ltimes I\overset{\nu}{\to}\Hol(M,\F_S)$$
  where $\eta$ and $\nu$ are essential homotopy equivalences. The homotopic inverse is the generalized map 
  $\Hol(M,\F_S)\overset{\nu}{\gets}\Z_2\ltimes I\overset{\eta}{\to}\bullet^{\Z_2}$. Note that $\cJ=\Z_2\ltimes I$ is Morita equivalent to $\G=\Hol(M,\F_S)$ since the inclusion $\nu=i_{\cJ}$ is an essential equivalence, but the groupoids $\cK$ and $\G$ are not Morita equivalent.
 \end{example}

We show now that the Morita homotopy type is invariant under Morita equivalence.
\begin{prop} If $\cK\sim_M\G$, then $\cK\simeq\G$.
\end{prop}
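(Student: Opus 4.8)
The plan is to show that a Morita equivalence between $\cK$ and $\G$ gives rise, via the same zig-zag, to a Morita homotopy equivalence, so the statement follows essentially by observing that the notion of essential homotopy equivalence is \emph{weaker} than that of essential equivalence. Concretely, if $\cK\simM\G$ then by definition there is a Lie groupoid $\cJ$ together with essential equivalences
$$\cK\overset{\e}{\gets}\cJ\overset{\sigma}{\to}\G.$$
By Proposition~\ref{BF1}(1) (equivalently the first item of Proposition~\ref{BF1}), every essential equivalence is an essential homotopy equivalence. Hence $\e$ and $\sigma$ are themselves essential homotopy equivalences, and the very same zig-zag
$$\cK\overset{\e}{\gets}\cJ\overset{\sigma}{\to}\G$$
exhibits $\cK$ as Morita homotopy equivalent to $\G$ in the sense of Definition~\ref{mhe}. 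Since we have already observed (just before Definition~\ref{mhe}, using two homotopy pullbacks of groupoids and Proposition~\ref{BF3}) that Morita homotopy equivalence is an equivalence relation, and since equivalence of objects in $\HH(W^{-1})$ coincides with Morita homotopy equivalence, we conclude $\cK\simeq\G$.

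First I would spell out the definition unwinding: take the groupoid $\cJ$ and essential equivalences $\e,\sigma$ coming from $\cK\simM\G$. Second, I would invoke Proposition~\ref{BF1} to upgrade ``essential equivalence'' to ``essential homotopy equivalence'' for both legs; no new construction is needed here, only the cited implication. Third, I would note that this is literally the data required by Definition~\ref{mhe}, so $\cK$ is Morita homotopy equivalent to $\G$, i.e. $\cK\simeq\G$ as objects of $\HH(W^{-1})$. Strictly speaking one only needs the relation $\simM$ generated by single essential equivalences, but since each generating step is also an essential homotopy equivalence and $\simeq$ is transitive, the implication passes to the generated equivalence relation as well.

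There is essentially no obstacle: the whole point is that the chain of implications ``essential equivalence $\Rightarrow$ essential homotopy equivalence'' has already been established in Proposition~\ref{BF1}, and the definitions of $\simM$ and $\simeq$ are parallel, differing only in which class of legs is allowed. The only mild subtlety worth a sentence is the remark, already made in the excerpt, that one may always choose the essential equivalences $\e,\sigma$ to be surjective submersions on objects; this guarantees the relevant homotopy pullbacks used to verify transitivity of $\simeq$ are well behaved, but it is not strictly necessary for the statement itself, only for fitting into the bicategorical framework. Thus the proof is a short two-line deduction from Proposition~\ref{BF1} and Definition~\ref{mhe}.
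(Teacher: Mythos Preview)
Your proposal is correct and follows essentially the same approach as the paper: unwind the definition of Morita equivalence to get a zig-zag of essential equivalences, invoke Proposition~\ref{BF1} to upgrade both legs to essential homotopy equivalences, and conclude via Definition~\ref{mhe} that $\cK\simeq\G$. The paper phrases the last step as saying that $(\eta,\nu)$ and $(\nu,\eta)$ are mutually inverse in $\HH(W^{-1})$, which is exactly the identification you cite between Morita homotopy equivalence and equivalence of objects in the bicategory.
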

\begin{proof}
If $\cK$ and $\G$ are Morita equivalent, then there is Lie groupoid $\cL$ and essential equivalences
\[\cK\overset{\eta}{\gets}\cL\overset{\nu}{\to}\G.\]
The morphisms $\eta$ and $\nu$ are also essential homotopy equivalences (see proposition \ref{BF1}). Then the morphisms $(\eta,\nu)$ and $(\nu,\eta)$ are inverse up to equivalence  and $\cK$ is equivalent to $\G$ in the bicategory $\HH(W^{-1})$.
\end{proof}

\begin{remark}
A Morita homotopy equivalence $\cK\simeq\G$ induces a Morita equivalence between the homotopy groupoids $\cK^*\sim_M\G^*$. If $(\eta, \nu)$ is a Morita equivalence, then $(\eta, \nu)$ is a $1$-homotopy equivalence in the sense of \cite{GH}.
\end{remark}

We will give an explicit description of the horizontal and vertical composition in this bicategory. The horizontal composition of 1-morphisms $\cK\overset{\eta}{\gets}\cJ\overset{\phi}{\to}\G$ and $\G\overset{\nu}{\gets}\cJ'\overset{\psi}{\to}\cL$ is given by the diagram

\[
\xymatrix@!=1pc{&&\cP_S\ar[dr]\ar[dl]&&\\
&\cJ\ar[dl]_{\eta}\ar[dr]^{\phi}\rrtwocell<\omit>{<0>S}&&\cJ'\ar[dl]_{\nu}\ar[dr]^{\psi}&\\
\cK &&\G&&\cL
}
\]
and the vertical composition of the 2-arrows 

$$
\xymatrix{ & 
{\cC}\ar[dr]^{\phi}="0" \ar[dl]_{\eta }="2"&\\
{\cK}&{\cL} \ar[u]_{u} \ar[d]^{v}&{\G}\\
&{\cD}\ar[ul]^{\eta'}="3" \ar[ur]_{\phi'}="1"&
\ar@{}"0";"1"|(.4){\,}="7" 
\ar@{}"0";"1"|(.6){\,}="8" 
\ar@{=>}"7" ;"8"_{S} 
\ar@{}"2";"3"|(.4){\,}="5" 
\ar@{}"2";"3"|(.6){\,}="6" 
\ar@{=>}"5" ;"6"^{S} 
}
\qquad
\xymatrix{ & 
{\cD}\ar[dr]^{\phi'}="0" \ar[dl]_{\eta' }="2"&\\
{\cK}&{\cL'} \ar[u]_{u'} \ar[d]^{v'}&{\G}\\
&{\cE}\ar[ul]^{\eta''}="3" \ar[ur]_{\phi''}="1"&
\ar@{}"0";"1"|(.4){\,}="7" 
\ar@{}"0";"1"|(.6){\,}="8" 
\ar@{=>}"7" ;"8"_{S'} 
\ar@{}"2";"3"|(.4){\,}="5" 
\ar@{}"2";"3"|(.6){\,}="6" 
\ar@{=>}"5" ;"6"^{S'} 
}
$$
 is given by the diagram
 
 \[
\xymatrix{ 
&&& {\cC}\ar[ddrrr]^{\phi}="2"  \ar[ddlll]_{\eta}="0" &&&\\
&&&\cL \ar[dr]^{v}\ar[u]_{u}&&&\\
\cK&& \cP_{S''}\ar[dr]^{} \ar[ur]_{}\rrtwocell<\omit>{<0>{\;\;S''}}&&\cD&&\G\\
&&&\cL'\ar[d]^{v'} \ar[ur]_{u'} &&&\\
&&&\cE\ar[uulll]^{\eta''}="1"  \ar[uurrr]_{\phi''}="3" &&&&
\ar@{}"2";"3"|(.4){\,}="5" 
\ar@{}"2";"3"|(.6){\,}="6" 
\ar@{=>}"5" ;"6"^{S''} 
\ar@{}"0";"1"|(.4){\,}="7" 
\ar@{}"0";"1"|(.6){\,}="8" 
\ar@{=>}"7" ;"8"_{S''} 
} 
\] 
where $S''=S\cup S'$.
\begin{remark}    If  the morphisms $(\e,\phi)$ and $(\delta,\psi)$ are generalized maps, the composition given by the fibered product groupoid $\cK\overset{}{\gets}\cJ'\times_{\G} \cJ\overset{}{\to}\cL$ is {\it Morita homotopic} to this horizontal composition in the bicategory $\HH(W^{-1})$. Since $\phi p_1\sim \psi p_3$ we have that $\phi p_1\simeq_S \psi p_3$ and by the universal property of the homotopy pullback, there exists a map $\xi: \cJ'\times_{\G}\cJ\to \cP_S$ such that the following diagram commutes up to homotopy:

$$
\xymatrix@!=2.3pc{
\cJ'\times_{\G} \cJ\ar@/_/[ddr]_{p_3} \ar@/^/[drr]^{p_1}\ar@{.>}[dr]|-{\xi}&&\\
& \cP_S\ar[d]^{\pi_3}\ar [r]^{\pi_1}&\cJ' \ar[d]_{\nu}\\
&\cJ\ar[r]^{\phi}&\G
}
$$ 
therefore there is a 2-morphism between the generalized homotopy maps $\cK\overset{}{\gets}\cJ'\times_{\G} \cJ\overset{}{\to}\cL$  and $\cK\overset{}{\gets}\cP_S\overset{}{\to}\cL$  given by the following diagram:
$$
\xymatrix{& & 
{\cJ'\times_{\G} \cJ}\ar[dd]^{\xi}\ar[dr]^{p_1}="0" \ar[dl]_{p_3 }="2"&&\\
{\cK}&{\cJ} \ar[l]_{\eta} &&\cJ'\ar[r]^{\psi}&\cL\\
&&\cP_S\ar[ul]^{\pi_3}="3" \ar[ur]_{\pi_1}="1"&
\ar@{}"0";"1"|(.4){\,}="7" 
\ar@{}"0";"1"|(.6){\,}="8" 
\ar@{=>}"7" ;"8"_{} 
\ar@{}"2";"3"|(.4){\,}="5" 
\ar@{}"2";"3"|(.6){\,}="6" 
\ar@{=>}"5" ;"6"^{} 
}$$
Then, when the generalized homotopy maps are generalized maps, we can set the groupoid fibered product as our chosen homotopy pullback.
\end{remark}

\begin{remark} We can think of  a multiple $\G$-path as a generalized map  $\I\overset{\e}{\gets}{\I'_S} \overset{\sigma}{\to}\G$ since the map $\e$ defined in the obvious way is an essential equivalence. Then, two multiple $\G$-paths $\I\overset{\e}{\gets}{\I'_S} \overset{\sigma}{\to}\G$  and $\I\overset{\e'}{\gets}{\I'_{S'}} \overset{\sigma'}{\to}\G$ are {\it homotopic} if there is a commutative diagram:
$$
\xymatrix{ & 
{\I'_S}\ar[dr]^{\sigma}="0" \ar[dl]_{\e }="2"&\\
{\I}&{\I'_{S''}} \ar[u]_{u} \ar[d]^{v}&{\G}\\
&{\I_{S'}}\ar[ul]^{\e'}="3" \ar[ur]_{\sigma}="1"&
\ar@{}"0";"1"|(.4){\,}="7" 
\ar@{}"0";"1"|(.6){\,}="8" 
\ar@{=>}"7" ;"8"_{H'} 
\ar@{}"2";"3"|(.4){\,}="5" 
\ar@{}"2";"3"|(.6){\,}="6" 
\ar@{=>}"5" ;"6"^{H} 
}
$$ 
where $S\cup S'\subset S''$
 \end{remark}

\section{Lusternik-Schnirelmann category}

\subsection{Categorical sets}
A submanifold $U\subset G_0$ is {\it invariant} if $t(s^{-1}(U))\subset U$.  The {\it restricted groupoid $\U$} to an invariant submanifold $U\subset G_0$ is the full groupoid over $U$. In other words, $U_0=U$ and $U_1=\{g\in G_1|s(g), t(g) \in U\}$. We write $\U=\G|_U\subset \G$.

 A restricted groupoid $\G|_{\cO}$ over an orbit $\cO$ will be called an {\it orbit groupoid}  and denoted by $\cO^K$, where $K=G_x$ is the isotropy group of $x$ for any $x\in \cO$.

A {\it generalized constant map}  from $\cK$ to $\G$ is a generalized map  $\displaystyle\cK\xleftarrow{\e}\cK'\xrightarrow{c}\G$ such that the image of the functor $c$ is contained in an orbit groupoid. We have that $c(\cK')\subset \cO^K$ with $K=G_x$ for some $x\in G_0$.

The {\it restriction} $(\e,\phi)|_{\V}$ of a generalized map  $\cK\overset{\e}{\gets}\cJ\overset{\phi}{\to}\G$ to the restricted groupoid $\V\subset \cK$ is the composition of the generalized map 
$(\e,\phi)$ and  the inclusion functor $\V\overset{\id}{\gets}\V\overset{i_{\V}}{\to}\cK$:
\[
\xymatrix@!=1pc{&&\cJ\times_{\cK}\V\ar[dr]^{p_1}\ar[dl]_{p_3}&&\\
&\V\ar[dl]_{\id}\ar[dr]^{i_{\V}}&&\cJ\ar[dl]_{\e}\ar[dr]^{\phi}&\\
\V &&\cK &&\G}
\]
where $\cJ\times_{\cK}\V$ is the fibered product groupoid.

The {\it product} $(\e,\phi)\times(\e',\phi')$ of two generalized maps $\cK\overset{\e}{\gets}\cJ\overset{\phi}{\to}\G$ and $\cK'\overset{\e}{\gets}\cJ'\overset{\phi'}{\to}\G'$ is given by the generalized map
$$\cK\times \cK'\overset{\e \times \e'}{\gets}\cJ \times \cJ'\overset{\phi \times \phi'}{\to}\G \times \G'.$$

We will define now a notion of $\G$-contraction within the groupoid $\G$. We keep the classical Lusternik-Schnirelmann terminology of  {\it categorical} for contractible subspaces in a given space.

\begin{definition} \label{d} For an invariant open set $U\subset G_0$, we will say that the restricted groupoid $\cU$ is $\G$-{\it categorical} if the inclusion functor $i_{\U}\colon \U\to \G$ is Morita homotopic to a generalized constant map
$$
\xymatrix{ & 
{\U}\ar[dr]^{i_{\U}}="0" \ar[dl]_{\id }="2"&\\
{\U}&{\cL} \ar[u]_{u} \ar[d]^{v}&{\G}\\
&{\U'}\ar[ul]^{\e}="3" \ar[ur]_{c}="1"&
\ar@{}"0";"1"|(.4){\,}="7" 
\ar@{}"0";"1"|(.6){\,}="8" 
\ar@{=>}"7" ;"8"_{H'} 
\ar@{}"2";"3"|(.4){\,}="5" 
\ar@{}"2";"3"|(.6){\,}="6" 
\ar@{=>}"5" ;"6"^{H} 
}
$$ 
where $u$ and $v$ are essential homotopy equivalences.
\end{definition}

\begin{example} Let $\G=\Hol(K,\F_S)$ be the holonomy groupoid of the Seifert fibration $\F_S$ on the Klein bottle $K$. Consider the invariant open set $U=M$, a M\"{o}bius band. Then the restricted groupoid over $U$ is $\U=\Hol(M,\F_S)$. Consider the essential homotopy equivalence $\eta: \bullet^{\Z_2} \to \Hol(M,\F_S)$ given by the inclusion as in example \ref{ex}. Let ${\Z_2}\ltimes I$ be the action groupoid given by the action of ${\Z_2}$ on the interval $I$ by reflection.
The inclusion functor $\e: {\Z_2}\ltimes I\to \Hol(M,\F_S)$ is an essential equivalence. We have the following diagram
$$
\xymatrix{ & 
{\Hol(M,\F_S)}\ar[dr]^{i_{\U}}="0" \ar[dl]_{\id }="2"&\\
{\Hol(M,\F_S)}&\bullet^{\Z_2}  \ar[u]_{\eta} \ar[d]^{i}&{\Hol(K,\F_S)}\\
&{\Z_2}\ltimes I\ar[ul]^{\e}="3" \ar[ur]_{c}="1"&
\ar@{}"0";"1"|(.4){\,}="7" 
\ar@{}"0";"1"|(.6){\,}="8" 
\ar@{=>}"7" ;"8"_{H'} 
\ar@{}"2";"3"|(.4){\,}="5" 
\ar@{}"2";"3"|(.6){\,}="6" 
\ar@{=>}"5" ;"6"^{H} 
}
$$ 
here $c: {\Z_2}\ltimes I\to {\Hol(K,\F_S)}$ is a constant map on objects $c(t)=x$ where $x$ is any point on a fiber with holonomy $\Z_2$ and $c$ is a constant map in each of the connected components of the manifold of arrows $\displaystyle I\sqcup I$, $c(t,0)=(x, 0)$ and $c(t,1)=(x, \pi)$. Here the manifold of arrows for the groupoid ${\Hol(K,\F_S)}$ is $K\times S^1$. Then $c({\Z_2}\ltimes I)\subset \bullet^{\Z_2}$ and $U=M$ is categorical.
\end{example} 
Given a categorical subgroupoid $\U$, we have that the inclusion $i_{\U}$ composed with an essential homotopy equivalence $u$ factors through an orbit groupoid up to homotopy:
$$
\xymatrix{  
{\U}\ar[dr]^{i_{\U}}="0" &\\
{\cL} \ar[u]_{u} \ar[d]^{v}&{\G}\\
{\U'}\ar[ur]_{c}="1" \ar[r]^{}&{\cO^K}\ar[u]^{}
\ar@{}"0";"1"|(.4){\,}="7" 
\ar@{}"0";"1"|(.6){\,}="8" 
\ar@{=>}"7" ;"8"_{} 
}
$$ 
In other words, if $\U$ is $\G$-categorical then there exists a groupoid $\cL$ and a group $K$ such that the diagram 

$$
\xymatrix{
{\cL}\ar[r]^{u}\ar[rd]^{}&{\U} \ar[r]^{i_{\U}}&{\G}\\
&{\cO^K}\ar[ur]^{}&
}
$$ 
commutes up to  homotopy.
 
Recall that a Lie groupoid is {\it transitive} if the map $(s,t):G_1\to G_0\times G_0$ is a surjective submersion.

\begin{prop} \cite{MM} A Lie groupoid $\G$ is Morita equivalent to a point groupoid if and only if $\G$ is transitive.
\end{prop}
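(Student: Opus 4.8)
The plan is to prove both implications of this characterization directly from the definition of essential equivalence. For the forward direction, suppose $\G$ is Morita equivalent to a point groupoid $\bullet^K$. By Definition of Morita equivalence, there is a Lie groupoid $\cJ$ together with essential equivalences $\bullet^K \overset{\e}{\gets} \cJ \overset{\sigma}{\to} \G$. I would first observe that $\bullet^K$ is transitive (trivially, since $G_0$ is a point), and then show that transitivity is preserved and reflected under essential equivalences. Concretely, using the essential surjectivity of $\e$ — the map $t\pi_1\colon J_1 \times_{J_0} ({\bullet}) \to \bullet$ is a surjective submersion — every object of $\cJ$ is connected by an arrow to the image of the single object of $\bullet^K$, and the fully faithful condition identifies the arrow spaces $J(x,z) \approx (\bullet^K)(\e(x),\e(z)) = K$. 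Hence $\cJ$ is transitive: $(s,t)\colon J_1 \to J_0 \times J_0$ is a surjective submersion. Then I would push transitivity forward along $\sigma$: using that $\sigma$ is essentially surjective and fully faithful, given $y, y' \in G_0$ I can find $x, x' \in J_0$ with arrows $\sigma(x) \to y$, $\sigma(x') \to y'$, and an arrow $x \to x'$ in $\cJ$ (by transitivity of $\cJ$), whose image composed with the connecting arrows produces an arrow $y \to y'$; smoothness and the submersion property follow from the submersion hypotheses on $\e$, $\sigma$ and on the structure maps.

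For the converse, suppose $\G$ is transitive, so $(s,t)\colon G_1 \to G_0 \times G_0$ is a surjective submersion. Fix a point $x_0 \in G_0$ and let $K = G_{x_0}$ be its isotropy group, which is a Lie group since $G_{x_0} = (s,t)^{-1}(x_0,x_0)$ is a closed submanifold of $G_1$. The claim is that the inclusion functor $\e\colon \bullet^K \to \G$ sending $\bullet \mapsto x_0$ and $K = G_{x_0} \hookrightarrow G_1$ is an essential equivalence. Essential surjectivity amounts to: $t\pi_1\colon G_1 \times_{G_0} (\bullet) \to G_0$, which here is just $t\colon t^{-1}(\ldots)$ — more precisely the source-fiber $s^{-1}(x_0) \to G_0$, $g \mapsto t(g)$ — is a surjective submersion; this follows because transitivity of $(s,t)$ forces each source-fiber $s^{-1}(x_0)$ to surject submersively onto $G_0$ under $t$ (a standard consequence, e.g. it is the structure of a principal $K$-bundle). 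Fully faithfulness is the statement that the square
$$\xymatrix{ K \ar[r] \ar[d] & G_1 \ar[d]^{(s,t)} \\ \bullet \times \bullet \ar[r] & G_0 \times G_0 }$$
is a pullback, i.e. $K = (s,t)^{-1}(x_0,x_0) = G_{x_0}$, which holds by the very definition of $K$.

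The main obstacle I expect is the careful verification of the submersion/pullback statements in the transitive case — namely that transitivity of $(s,t)\colon G_1 \to G_0 \times G_0$ really does make $t\colon s^{-1}(x_0) \to G_0$ a surjective submersion, and that $G_{x_0}$ is a Lie group, so that $\bullet^K$ is a genuine Lie groupoid. This is the point where one genuinely uses the Lie structure (surjective submersions, constant rank, the implicit function theorem) rather than just the algebra of groupoids; the algebraic bookkeeping in both directions is routine once these differential-topological facts are in hand. Since the statement is attributed to \cite{MM}, I would in the write-up simply cite that reference, but the sketch above is the argument behind it.
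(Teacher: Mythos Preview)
The paper does not actually give a proof of this proposition: it is stated with a citation to \cite{MM} and no argument is supplied. So there is nothing in the paper to compare your sketch against, and your closing remark --- that in the write-up one would simply cite the reference --- is exactly what the author does.

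Your sketch is essentially correct, but one point in the forward direction is slightly garbled. For the essential equivalence $\e\colon \cJ \to \bullet^K$, the essential surjectivity condition is $t\pi_1\colon K \times_{\bullet} J_0 \to \bullet$, which is trivially satisfied and tells you nothing about $\cJ$. What actually forces $\cJ$ to be transitive is the \emph{fully faithful} condition: it says $J_1$ is the pullback of $K \to \bullet \times \bullet$ along $\e\times\e\colon J_0\times J_0 \to \bullet\times\bullet$, i.e.\ $J_1 \cong J_0 \times J_0 \times K$ with $(s,t)$ the projection onto the first two factors, hence a surjective submersion. You use full faithfulness in the next sentence anyway, so the argument survives; just be aware that the essential-surjectivity clause is doing no work here. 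The converse direction (the inclusion $\bullet^{G_{x_0}} \hookrightarrow \G$ is an essential equivalence when $\G$ is transitive) is handled correctly, and your identification of the genuine analytic content --- that $t\colon s^{-1}(x_0)\to G_0$ is a surjective submersion and $G_{x_0}$ is a Lie group --- is exactly the point where one needs the standard structure theory of transitive Lie groupoids from \cite{MM}.
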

    
Since an orbit groupoid is transitive, for $\U$ to be $\G$-categorical we can substitute the constant generalized map in definition \ref{d} by a generalized map with image contained in a point groupoid.
 
\begin{definition}    The LS category, $\cat \G$, of a Lie groupoid $\G$ is the least number of
    $\G$-categorical subgroupoids required to cover $\G$. If $\G$ cannot be covered by a finite number of $\G$-categorical subgroupoids, set $\cat \G=\infty$.
    \end{definition}

This number is an {\it invariant of  Morita equivalence} that generalizes the ordinary LS category of a manifold. If $\G=u(M)$ is the unit groupoid, then 
$\cat \G=\cat M$, where $\cat$ in the second term means the ordinary LS category.

With these definitions we choose the {\it point groupoids} as our contractible groupoids. We make this choice in spite of the fact that the fundamental group of the point groupoid is not necessary trivial since $\pi_1(\po)=K$ if $K$ is discrete. Our choice is inspired by the $\cA$-category of Clapp and Puppe \cite{Monica} and justified by the equivariant LS theory for group actions. The point groupoid can be seen as the translation groupoid of the (ineffective) action of $K$ on a point $\bullet$. Since the equivariant LS category of a $K$-manifold, $\cat_KM$, coincides with the ordinary category $\cat M$ when the action is trivial, we have that the equivariant category $\cat_K\bullet=\cat\bullet=1$. In our theory, a $\G$-categorical groupoid factors through a point groupoid. For all groups $K$ we have:
$$\cat\po=\cat_K\bullet=\cat\bullet=1$$ where the first term is the {\it groupoid category}  of the point groupoid, the second is the {\it equivariant category} of the action of $K$ on a point, and the third, the ordinary category of a  point.

In section \ref{6} we will give a weighted version of the LS groupoid category that takes into account the fact that point groupoids are not contractible in the sense mentioned above.

\subsection{Invariance of Morita homotopy type}
We will show that the LS category of a groupoid is an invariant of Morita homotopy type, and then, invariant under Morita equivalence.

\begin{prop}\label{morita}    If $\cK\simeq G$ then $\cat\cK=\cat\G$.
 \end{prop}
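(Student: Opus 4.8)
The strategy is to show that a Morita homotopy equivalence $\cK\simeq\G$ transports $\G$-categorical coverings to $\cK$-categorical coverings and vice versa, so that the minimal cardinalities agree. By Definition~\ref{mhe} (equivalently, equivalence of objects in $\HH(W^{-1})$), there is a groupoid $\cL$ and essential homotopy equivalences $\cK\overset{\eta}{\gets}\cL\overset{\nu}{\to}\G$, and the pair $(\eta,\nu)$ is invertible in $\HH(W^{-1})$ with inverse $(\nu,\eta)$. By symmetry it suffices to prove $\cat\cK\le\cat\G$; applying the same argument to the inverse generalized map gives the reverse inequality.

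So suppose $\G$ is covered by $\G$-categorical invariant open sets $U_1,\dots,U_m$ with $m=\cat\G$, and let $\cU_i=\G|_{U_i}$. First I would pull each $\cU_i$ back along the generalized map $(\eta,\nu)$ from $\cK$ to $\G$: using the fibered product (or homotopy pullback) $\cL\times_\G\cU_i$ and the projection to $\cL$, then pushing forward along $\eta$, one obtains an invariant open set $V_i\subset K_0$ (openness and invariance follow because $\eta$, being an essential homotopy equivalence, is covered by $\eta=\e h$ with $\e$ essential and $h$ an $S$-homotopy equivalence, and essential equivalences pull back invariant opens to invariant opens while $S$-homotopy equivalences induce homeomorphisms of orbit spaces, cf.\ Remark~\ref{isotropy} and the discussion after Definition~\ref{mhe}). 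Because the $U_i$ cover $G_0$ and $|\nu|,|\eta|$ are homotopy equivalences of orbit spaces that are in particular surjective, the $V_i$ cover $K_0$.

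The heart of the argument is then: \emph{if $\cU$ is $\G$-categorical, then its pullback $\cV$ along an essential homotopy equivalence is $\cK$-categorical}. Here I would use that $\cU$ being $\G$-categorical means $i_\cU$ is Morita homotopic to a generalized constant map, i.e.\ factors up to homotopy through a point groupoid $\po$ (as noted in the remark following Proposition~\ref{MM-transitive}), and that Morita homotopy of generalized (homotopy) maps is exactly 2-isomorphism in $\HH(W^{-1})$, which is preserved under pre- and post-composition by 1-morphisms of the bicategory. Composing the factorization $\cV\to\cU\overset{i_\cU}{\to}\G$ with the invertible morphism induced by $(\nu,\eta)$ (whose inverse is $(\eta,\nu)$) carries the inclusion $i_\cV\colon\cV\to\cK$ to a generalized homotopy map that still factors through $\po$ up to homotopy; since the class of $\G$-categorical sets may be detected by factorization through a point groupoid (the paragraph before the definition of $\cat\G$), this shows $\cV$ is $\cK$-categorical. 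Thus $\{V_i\}_{i=1}^m$ is a $\cK$-categorical cover, giving $\cat\cK\le m=\cat\G$.

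\textbf{Main obstacle.} The delicate point is the bookkeeping of the pullback: one must check that the homotopy pullback of $\cL$ (or $\cK$) against the \emph{restricted} groupoid $\cU$ is again (Morita homotopy equivalent to) a \emph{restricted} groupoid over an honest invariant open subset of $K_0$, and that the essential-homotopy-equivalence property is inherited so that the composite lands in $W$. This uses Lemma~\ref{above} and Proposition~\ref{BF3} to keep the left legs in $W$ after pullback, plus the fact, recorded after Definition~\ref{mhe}, that the essential homotopy equivalences can be taken to be homotopic to surjective submersions on objects, which guarantees the relevant homotopy pullbacks are genuine Lie groupoids (Remark~\ref{manifold}). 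Once these compatibilities are in place, invariance of $\cat$ under Morita homotopy — and hence, via Proposition~\ref{BF1}, under Morita equivalence — follows formally from the bicategorical calculus of fractions.
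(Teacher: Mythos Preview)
Your approach is the same as the paper's: pull back a categorical covering along the generalized (homotopy) maps witnessing the equivalence, and show the pulled-back subgroupoids are again categorical. The paper carries this out very concretely. It fixes \emph{generalized maps} $\cK\overset{\e}{\gets}\cJ\overset{\phi}{\to}\G$ and $\G\overset{\delta}{\gets}\cJ'\overset{\psi}{\to}\cK$ (so $\e,\delta$ are essential equivalences, not merely essential homotopy equivalences) with $(\e,\phi)\circ(\delta,\psi)\simeq\id_\G$, takes $\U\subset\cK$ categorical, defines $\V=\delta p'_3(\U\times_\cK\cJ')$ via a fibered product, and then, through a chain of explicit $2$-morphism diagrams and two internal lemmas (one showing $\psi p_1(\cJ'\times_\G\V)\subset\U$, the other that the induced ``constant'' map $c'$ again lands in an orbit groupoid with isomorphic isotropy), exhibits the Morita homotopy from $i_\V$ to a generalized constant. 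Your bicategorical phrasing (``preserved under pre- and post-composition in $\HH(W^{-1})$'') is exactly what these diagrams encode; you have correctly located the nontrivial content in the ``Main obstacle'' paragraph.

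One genuine slip: you assert that $|\eta|,|\nu|$, being homotopy equivalences of orbit spaces, are ``in particular surjective''. That is false in general; a homotopy equivalence need not be onto. This matters for your coverage claim that the $V_i$ cover $K_0$. The paper sidesteps the issue by working with essential equivalences $\e,\delta$ on the left legs, which induce \emph{homeomorphisms} of orbit spaces, so that the image $\delta p'_3(\,\cdot\,)$ really does saturate $G_0$ once the $U_i$ cover $K_0$. In your framework you should either (i) replace the left legs by essential equivalences (any Morita homotopy equivalence can be presented this way, since $\HH(W^{-1})$-equivalences can be realized by spans whose legs factor as $\e h$ and one may absorb the $h$'s), or (ii) argue coverage at the level of orbit spaces using that the saturation of the image under an essentially surjective map is everything. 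Either fix is routine, but the sentence as written is incorrect.
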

\begin{proof}
We will prove that if $\cK$ dominates $\G$, then $\cat\cK\ge\cat\G$. Let $\cK\overset{\e}{\gets}\cJ\overset{\phi}{\to}\G$ and $\G\overset{\delta}{\gets}\cJ'\overset{\psi}{\to}\cK$ be two generalized maps such that the composition $(\phi,\e)\circ(\psi,\delta)$ is Morita homotopic to the identity in $\G$. We have the following diagram:

\begin{equation}
\label{1}
\xymatrix@!=1pc{ 
&& {\G}\ar[ddrr]^{\id}="0" \ar[ddll]_{\id}="2"&&\\
&&&&\\
{\G}\rrtwocell<\omit>{<0>S}&&{\cE} \ar[uu]_{} \ar[dd]^{}\rrtwocell<\omit>{<0>S}&&{\G}\\
&\cJ'\ar[ul]^{\delta}&&\cJ\ar[ur]^{\phi}&\\
&&{\cJ\times_{\cK}\cJ'}\ar[ul]^{p_3}="3" \ar[ur]_{p_1}="1"&&
}
\end{equation}

Let $\U\subset \cK$ be a $\cK$-categorical subgroupoid, then we have the following diagram:

\begin{equation}
\label{2}
\xymatrix@!=1pc{
&& {\U}\ar[drr]^{i_{\cK}}="0" \ar[dll]_{\id}="2"&&\\
{\U}\rrtwocell<\omit>{<0>S'}&&{\cL} \ar[u]_{} \ar[dd]^{}\rrtwocell<\omit>{<0>S'}&&{\cK}\\
&&&\po\ar[ur]&\\
&&{\U'}\ar[uull]^{z}="3" \ar[ur]_{}="1"&&
}
\end{equation}

Let $\V\subset \G$ be the groupoid given by $\delta p'_3(\U\times_{\cK}\cJ')$ in the following pullback:

$$
\xymatrix{&
{\U\times_{\cK}\cJ'}\ar[r]^{p'_1}\ar[d]^{p'_3}&{\U} \ar[d]^{i_{\U}}\\
\G&{\cJ'}\ar[r]^{\psi}\ar[l]^{\delta}&\cK
}
$$

In diagram (\ref{1}), take the restriction to $\V$, i.e. compose both maps with the inclusion functor $\V\overset{\id}{\gets}\V\overset{i_{\V}}{\to}\G$

\begin{equation}
\label{3}
\xymatrix@!=1pc{ 
&&{\V}\ar[dr]^{i_{\V}} \ar[dl]_{\id}&&\\
&{\V}\ar@{=}[dd]\ar[dr]^{i_{\V}}\ar[dl]_{\id}&&{\G} \ar[dr]^{\id} \ar[dl]_{\id}&\\
\V&&{\G}\rtwocell<\omit>{<0>S}&\cE\rtwocell<\omit>{<0>S}\ar[u]_{}\ar[d]_{}&\G\\
&{\V}\ar[ur]^{i_{\V}}\ar[ul]^{\id}&&{\cJ\times_{\cK}\cJ'} \ar[ur]_{\phi p_1} \ar[ul]^{\delta p_3}&\\
&& {\cJ\times_{\cK}\cJ'\times_{\G}\V}\ar[ur]^{} \ar[ul]_{}&&
}
\end{equation}

Now take the restriction of $\G\overset{\delta}{\gets}\cJ'\overset{\psi}{\to}\cK$ to $\V$

\begin{equation}
\label{4}
\xymatrix@!=1pc{
&&\cJ'\times_{\G}\V\ar[dr]^{p_1}\ar[dl]^{p_3}&&\\
&\V\ar[dl]_{\id}\ar[dr]^{i_{\V}}&&\cJ'\ar[dl]_{\delta}\ar[dr]^{\psi}&\\
\V &&\G &&\cK}
\end{equation}

\begin{lemma} $\psi p_1(\cJ'\times_{\G}\V)\subset \U$.
\end{lemma}
\begin{proof} (of the lemma) Use that $U$ is invariant and the essential equivalence $\delta$ induces a diffeomorphism between the isotropy groups.
\end{proof}

Since $\psi p_1(\cJ'\times_{\G}\V)\subset \U$ we obtain the generalized map
$\V\overset{}{\gets}\cJ'\times_{\G}\V\overset{}{\to}\U$ from diagram (\ref{4}).

Now we compose this map with the inclusion functor $\U\overset{\id}{\gets}\U\overset{}{\to}\cK$ and
 with the generalized map $\cK\overset{\e}{\gets}\cJ\overset{\phi}{\to}\G$, we have:
 
 \begin{equation}
\label{5}
\xymatrix@!=1pc{
&&&&\cJ\times_{\cK}\U\times_{\U}\cJ'\times_{\G}\V\ar[drr]\ar[dll]&&\\
&&\U\times_{\U}\cJ'\times_{\G}\V \ar[dl]_{}\ar[dr]^{}&&&&\cJ\ar[ddll]_{\e}\ar[ddrr]^{\phi}&&\\
&\cJ'\times_{\G}\V \ar[dl]_{}\ar[dr]^{}&&\U\ar[dl]_{}\ar[dr]^{}&&&&&\\
\V&&\U&&\cK&&&&\G
}
\end{equation}
We will show that the generalized map $\V\overset{}{\gets}\cJ\times_{\cK}\cJ'\times_{\G}\V\overset{}{\to}\G$ obtained by this composition is homotopic to the map 
$$\V\overset{}{\gets}\cJ\times_{\cK}\U'\times_{\U}\cJ'\times_{\G}\V\overset{c'}{\to}\G$$ obtained from the following compositions

\begin{equation}
\label{6}
\xymatrix@!=1pc{
&&&&\cJ\times_{\cK}\U'\times_{\U}\cJ'\times_{\G}\V\ar[drr]^{\pi_1}\ar[dll]&&\\
&&\U'\times_{\U}\cJ'\times_{\G}\V \ar[dl]_{}\ar[dr]^{}&&&&\cJ\ar[ddll]_{\e}\ar[ddrr]^{\phi}&&\\
&\cJ'\times_{\G}\V \ar[dl]_{}\ar[dr]^{}&&\U'\ar[dl]_{z}\ar[dr]^{c}&&&&&\\
\V&&\U&&\cK&&&&\G
}
\end{equation}
Let $c'=\phi \pi_1$, we have the following 
\begin{lemma}\label{lema} The image of $c'=\phi \pi_1$  is contained in an orbit groupoid. Moreover, if $c(\U')\subset {\cO}^K$  then  $c'(\cJ\times_{\cK}\U'\times_{\U}\cJ'\times_{\G}\V)\subset {\cO}^{K'}$ where $K\approx K'$.
\end{lemma}
\begin{proof} (of the lemma) Use the fact that $\e$ induces a  homeomorphism between the orbit spaces and a diffeomorphism between the isotropy groups.
\end{proof}

Then $\V\overset{}{\gets}\cJ\times_{\cK}\U'\times_{\U}\cJ'\times_{\G}\V\overset{c'}{\to}\G$ is a generalized constant map. By using diagram (\ref{2}) we have that 
\begin{equation}
\label{7}
\V\overset{}{\gets}\cJ\times_{\cK}\cJ'\times_{\G}\V\overset{}{\to}\G\qquad\simeq_{S'}\qquad \V\overset{}{\gets}\cJ\times_{\cK}\U'\times_{\U}\cJ'\times_{\G}\V\overset{c'}{\to}\G
\end{equation}

Then we have a 2-morphism 

\begin{equation}
\label{8}
\xymatrix{ & 
{\V}\ar[dr]^{}="0" \ar[dl]_{}="2"&\\
{\V}&{\cL} \ar[u]_{} \ar[d]^{}&{\G}\\
&{\cJ\times_{\cK} \cJ'\times_{\G}\V}\ar[ul]^{}="3" \ar[ur]_{}="1"&
\ar@{}"0";"1"|(.4){\,}="7" 
\ar@{}"0";"1"|(.6){\,}="8" 
\ar@{=>}"7" ;"8"_{S} 
\ar@{}"2";"3"|(.4){\,}="5" 
\ar@{}"2";"3"|(.6){\,}="6" 
\ar@{=>}"5" ;"6"^{S} 
}
\end{equation}
given by diagram (\ref{3}) and another 2-morphism 

\begin{equation}
\label{9}
\xymatrix{ & 
{{\cJ\times_{\cK} \cJ'\times_{\G}\V}}\ar[dr]^{}="0" \ar[dl]_{}="2"&\\
{\V}&{\cL'} \ar[u]_{} \ar[d]^{}&{\G}\\
&{\cJ\times_{\cK}\U'\times_{\U}\cJ'\times_{\G}\V}\ar[ul]^{}="3" \ar[ur]_{c'}="1"&
\ar@{}"0";"1"|(.4){\,}="7" 
\ar@{}"0";"1"|(.6){\,}="8" 
\ar@{=>}"7" ;"8"_{S'} 
\ar@{}"2";"3"|(.4){\,}="5" 
\ar@{}"2";"3"|(.6){\,}="6" 
\ar@{=>}"5" ;"6"^{S'} 
}
\end{equation}
given by equation (\ref{7}).

 The vertical composition of these 2-morphisms (\ref{8}) and (\ref{9}) gives a 2-morphism between the inclusion and a generalized constant map:
 
\[
\xymatrix{ 
&& {\V}\ar[ddrrr]^{i_{\V}}="2"  \ar[ddll]_{\id}="0" &&&\\
&&\cL \ar[dr]^{v}\ar[u]_{}&&&\\
\cK& \rtwocell<\omit>{<0>\;\;\scriptstyle S''} &\cP_{S''}\rtwocell<\omit>{<0>\;\;\scriptstyle S''}\ar[d]^{} \ar[u]_{}&\cD\rtwocell<\omit>{<0>\;\;\scriptstyle S''}&&\G\\
&&\cL'\ar[d]^{} \ar[ur]_{u'} &&&\\
&&\cJ\times_{\cK}\U'\times_{\U}\cJ'\times_{\G}\V\ar[uull]^{}="1"  \ar[uurrr]_{}="3" &&&&
} 
\] 
where $\cD=\cJ\times_{\cK} \cJ'\times_{\G}\V$. Therefore, $\V$ is categorical for $\G$.
\end{proof}

\subsection{Homotopy restrictions}
If $\U$ is $\G$-categorical, we have that the following diagram commutes up to homotopy:
$$
\xymatrix{ & 
{\U}\ar[dr]^{i_{\U}}="0" \ar[dl]_{\id }="2"&\\
{\U}&{\cL} \ar[u]_{u} \ar[d]^{v}&{\G}\\
&{\U'}\ar[ul]^{\e}="3" \ar[ur]_{c}="1"\ar[r]^{}&{\po}\ar[u]^{}
\ar@{}"0";"1"|(.4){\,}="7" 
\ar@{}"0";"1"|(.6){\,}="8" 
\ar@{=>}"7" ;"8"_{H'} 
\ar@{}"2";"3"|(.4){\,}="5" 
\ar@{}"2";"3"|(.6){\,}="6" 
\ar@{=>}"5" ;"6"^{H} 
}
$$ 
Then, for all $y\in L_0$  the isotropy group $G_y$ injects into $G_x$ for  $x=u(y)\in U$ and into $G_z$ for  $z=v(y)\in U'$
 by proposition \ref{inj} and remark \ref{isotropy}. We have that $G_x \rightarrowtail K$. In particular, if the isotropy groups are finite, we have that 
 $|G_x|$ divides $|K|$ for all $x\in U$. For instance, a categorical subgroupoid $\U$ cannot factor through a trivial group $K$ except in case that all the points in $U$ have trivial isotropy.

 \begin{example} Let $\G=\Hol(K,\F_S)$ be the holonomy groupoid of the Seifert fibration $\F_S$ on the Klein bottle $K$. We can cover $K$ by two M\"{o}bius
bands $M$ as before. Since the isotropy at the points in the center fiber is ${\Z_2}$  these points cannot be moved by a homotopy to a point in the neighborhood with trivial isotropy. Therefore this covering is minimal and the groupoid category is $\cat \Hol(K,\F_S)=2$.
\end{example}

\section{Orbifolds as groupoids}
We recall now the description of orbifolds as groupoids due to Moerdijk and Pronk \cite{MP,M}. Orbifolds were first introduced by Satake \cite{S} as a generalization of a manifold defined in terms of local quotients. The groupoid approach provides a global language to reformulate the notion of orbifold.

We follow the exposition in \cite{M} and \cite{A}. A groupoid $\G$ is {\it proper} if $(s,t):G_1\to G_0\times G_0$ is a proper map and it is a {\it foliation} groupoid if each isotropy group is discrete. 

\begin{definition}
An {\it orbifold} groupoid is a proper foliation groupoid.
\end{definition}

For instance the holonomy group of a foliation $\F$ is always a foliation groupoid but it is an orbifold groupoid if and only if $\F$ is a compact-Hausdorff foliation.

Given an orbifold groupoid $\G$, its orbit space $|\G|$ is a locally compact Hausdorff space. Given an arbitrary locally compact Hausdorff space $X$ we can equip it  with an orbifold structure as follows:

\begin{definition} An {\it orbifold structure} on a locally compact Hausdorff space $X$ is given by an orbifold groupoid $\G$ and a homeomorphism $h:|\G|\to X$.
\end{definition}
If $\e:\cH\to \G$ is an essential equivalence and $|\e|:|\cH|\to |\G|$ is the induced homeomorphism between orbit spaces, we say that the composition $h\circ|\e|:|\cH|\to X$ defines an {\it equivalent} orbifold structure. 

\begin{definition}  An {\it orbifold}  $\X$ 
is a space $X$ equipped with an equivalence class of orbifold 
structures. A specific such structure, given by 
$\G$ and  $h : |\G | \to X $ is
a {\it presentation} of the orbifold 
$\X$.
\end{definition}

If two groupoids are Morita equivalent, then they define the same orbifold. Therefore any structure or invariant for orbifolds, if defined through groupoids, should be invariant under Morita equivalence. 

\begin{definition}  An {\it orbifold map} $f\colon \Y\to \X$ is given by an equivalence class of generalized maps $(\e,\phi)$ from $\cK$ to $\G$ between presentations of the orbifolds such that the diagram commutes:

\[\xymatrix{
|\cK| \ar[r]^{|\phi||\e|^{-1}}\ar[d]& |\G| \ar[d]^{} \\ 
Y \ar[r]& X}\]
A specific such generalized map $(\e,\phi)$ is called a {\it presentation} of the orbifold map $f$.
\end{definition}
Our  notion of Morita homotopy gives a notion of homotopy for orbifolds:
\begin{definition}
An {\it orbifold homotopy} between the orbifold maps $f, g\colon \Y\to \X$ is given by a Morita homotopy between the presentations 
 $(\e,\phi)$ of $f$ and $(\delta,\psi)$ of $g$.
\end{definition}
In other words, an orbifold homotopy is given by the following diagram 
 $$
\xymatrix{ & 
{\cK'}\ar[dr]^{\phi}="0" \ar[dl]_{\eta}="2"&\\
{\cK}&{\cL} \ar[u]_{u} \ar[d]^{v}&{\G}\\
&{\cK''}\ar[ul]^{\nu}="3" \ar[ur]_{\psi}="1"&
\ar@{}"0";"1"|(.4){\,}="7" 
\ar@{}"0";"1"|(.6){\,}="8" 
\ar@{=>}"7" ;"8"_{H'} 
\ar@{}"2";"3"|(.4){\,}="5" 
\ar@{}"2";"3"|(.6){\,}="6" 
\ar@{=>}"5" ;"6"^{H} 
}
$$ 
where $\cK$ to $\G$ are presentations of the orbifolds $\Y$ and $\X$.

\begin{remark} Since the groupoid $\cL$ has the same {\it Morita homotopy type} as $\cK$, we can always take a presentation of the orbifold homotopy $\Y\times \I\to \X$ given by a strict homotopy $H^S\colon \cL\times \I_S\to \G$, where $\cL$ and $\cK$ define the same orbifold homotopy class of orbifolds.
\end{remark}

\begin{example}\label{ex1}
Consider the orbifold $\X$ having as a presentation groupoid the holonomy groupoid associated to the Seifert fibration on the M\"{o}bius band, $\G=\Hol(M,\F_S)$, its underlying space is $X=[0,1)$. The orbifold $\X$ has the same orbifold homotopy type as  the orbifold $\Y$ represented by the point groupoid $\cK=\bullet^{\Z_2}$. The Morita homotopy equivalence is given by an orbifold map $ \Y\to \X$ with presentation:
$$\bullet^{\Z_2}\overset{\eta}{\gets}\Z_2\ltimes I\overset{\nu}{\to}\Hol(M,\F_S)$$
as in example \ref{mht}. The homotopic inverse $ \X\to \Y$ is given by the generalized map $\Hol(M,\F_S)\overset{\nu}{\gets}\Z_2\ltimes I\overset{\eta}{\to}\bullet^{\Z_2}$.

\end{example}

\subsection{LS category for orbifolds}
If $\G$ is a presentation for the orbifold $\X$, we define the LS category of $\X$ as the groupoid category of $\G$, $\cat \X=\cat \G$.

By proposition \ref{morita} the LS category of a groupoid is invariant under Morita equivalence, then this definition is independent of the chosen presentation for $\X$.

The LS category of an orbifold $\X$ is an invariant of the orbifold homotopy type which generalizes the classical LS  category in case that the orbifold is a manifold. If the orbifold is {\it effective} the LS category for orbifolds as groupoids coincide with the LS category for orbifolds as defined by the author in \cite{Skye} using local charts.

\begin{remark} Since a transitive groupoid is Morita equivalent to a point groupoid, we have that $\cat \X=1$ if  the underlying topological space $X=|\G|$ of the orbifold $\X$ is a point.
\end{remark}

For instance, we have that in example \ref{ex1} the orbifold category of $\X$ is $\cat \X=1$ since the presentation $\G=\Hol(M,\F_S)$ has the same Morita homotopy type of a point groupoid. In this case, the ordinary category of the underlying topological space $X=[0,1)$, coincides with the orbifold category of $\X$, though in general, it is just a lower bound:

\begin{prop}
Let $\X$ be an orbifold and $X$ its underlying topological space. Then $\cat\X\ge\cat X$. 
\end{prop}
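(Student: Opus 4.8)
## Proof Proposal

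The plan is to take any cover of $\X$ by $\G$-categorical subgroupoids and push it down to a cover of the underlying space $X$ by subsets that are contractible in $X$, thereby showing that the minimal such cover of $X$ is no larger than $\cat\G = \cat\X$. First I would fix a presentation $\G$ of $\X$, so that $X = |\G|$ up to homeomorphism, and suppose $\cat\G = n$, with $G_0 = U^1 \cup \cdots \cup U^n$ a cover by invariant open sets such that each restricted groupoid $\U^i = \G|_{U^i}$ is $\G$-categorical. Each $U^i$, being invariant and open, descends to an open set $V^i = |\U^i| \subset |\G| = X$, and since the $U^i$ cover $G_0$ the sets $V^i$ cover $X$. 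So the combinatorial count is already matched; what remains is to verify that each $V^i$ is contractible \emph{in} $X$ in the classical sense.

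The key step is the passage from a Morita homotopy contracting $\U^i$ inside $\G$ to an ordinary homotopy contracting $V^i$ inside $X$. By Definition~\ref{d}, $\G$-categoricity of $\U^i$ gives a groupoid $\cL$, essential homotopy equivalences $u\colon \cL \to \U^i$ and $v\colon \cL \to \U'$, and strong homotopies realizing a 2-morphism between $i_{\U^i}\circ u$ and (the composite through) a generalized constant map $c$ with image in an orbit groupoid $\cO^K$. The point is that all of this is functorial for orbit spaces: an essential (homotopy) equivalence $\eta$ induces a homotopy equivalence $|\eta|$ between orbit spaces, as already noted after the definition of essential homotopy equivalence; a strong homotopy $H^S\colon \cK\times\I_S \to \G$ induces, on orbit spaces, a path-by-path concatenation that assembles into an ordinary homotopy $|\cK|\times I \to |\G|$ (the subdivision groupoid $\I_S$ has orbit space $[0,1]$); and an orbit groupoid $\cO^K$ has a single point as its orbit space. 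Applying $|\,\cdot\,|$ to the whole diagram in Definition~\ref{d} therefore yields: $|V^i| \xleftarrow{\ |u|\ } |\cL| \xrightarrow{\ |v|\ } |\U'|$ with $|u|$ a homotopy equivalence, together with a homotopy in $X$ between the inclusion $V^i \hookrightarrow X$ precomposed with $|u|$ and a constant map. Composing with a homotopy inverse of $|u|$ gives a homotopy in $X$ from $\mathrm{id}_{V^i}$ (followed by the inclusion) to a constant, i.e. $V^i$ is contractible in $X$.

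Hence $X$ is covered by $n$ open sets each contractible in $X$, so $\cat X \le n = \cat\G = \cat\X$, which is the claim. The main obstacle I expect is the bookkeeping in the middle step: making precise that the orbit-space functor $|\,\cdot\,|$ sends the 2-categorical data (essential homotopy equivalences, strong $S$-homotopies through a groupoid $\cL$, generalized constant maps) to a genuine classical nullhomotopy of $V^i$ in $X$ — one has to check that the spans are turned into honest maps after inverting the homotopy equivalence $|u|$ on orbit spaces, and that the $S$-subdivision in the source of the homotopy collapses correctly to the unit interval. Once the naturality of $|\,\cdot\,|$ with respect to these three ingredients is established (each piece is essentially Remark~\ref{gpath} / Proposition~\ref{inj}-type reasoning transported to orbit spaces, plus the observation $|\po| = \bullet$), the inequality follows formally. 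A minor point to address is that the $V^i$ are open in $X$: this holds because $U^i$ is open and invariant in $G_0$ and the quotient map $G_0 \to |\G|$ is open.
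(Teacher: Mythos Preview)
Your proposal is correct and follows essentially the same route as the paper: pass to orbit spaces, use that the quotient map $G_0\to|\G|$ is open so that invariant open sets descend to open sets, use that $|\I_S|=I$ so that an $S$-homotopy induces an ordinary homotopy, and use that an orbit groupoid has a one-point orbit space so that the induced map is constant. The paper's proof is terser and does not explicitly unpack the 2-morphism data or the step of inverting the homotopy equivalence $|u|$ on orbit spaces; your more careful treatment of that bookkeeping is a genuine improvement in exposition but not a different argument.
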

\begin{proof}
Let $\G$ be a presentation groupoid for the orbifold $\X$, then $|\G|$ is homeomorphic to $X$. We will show that $\cat\G\ge\cat |\G|$. Let $U\subset G_0$ be an invariant open set such that $\U$ is $\G$-categorical. Since $s$ and $t$ are open maps, we have that $\pi\colon \G\to |\G|$ is an open map too. Therefore, $\pi(U)$ is an open set in $|\G|$. The $S$-homotopy between the inclusion and the constant map induces a homotopy $H\colon \pi(U)\times |\I_S | \to |\G|$ in the orbit spaces. Since $\I_S$ is Morita equivalent to the interval $I$ regarded as a unit groupoid, we have that  $|\I_S | =I$. Thus $\pi(U)$ is categorical for  $|\G|=X$, in the ordinary sense.
\end{proof}

\begin{example}
 The groupoids $\G$ and $\cK$ in example \ref{tear} define the {\it teardrop} orbifold $\X$. For the groupoid $\G$, we have a $\G$-categorical covering given by $U=D_1$ and $V=D_2$ and for $\cK$ we can consider the $\cK$-categorical covering given by $U=S^3-T_N$ and $V=S^3-T_S$ where $T_S$  and $T_N$ are the cores of the solid tori. We have that $\cat \X=2$.
\end{example}

We recall now the notion of inertia groupoid \cite{M} and  its decomposition  in twisted and untwisted sectors.
We show that the orbifold category of the twisted sectors provides a lower bound for the orbifold category of the orbifold.

Let $\G$ be an orbifold groupoid. The inertia groupoid $\wedge \G$ is a Lie groupoid whose manifold of objects is given by
$$(\wedge \G)_0=S_{\G}=\{g\in G_{1} | s(g)=t(g)\}.$$
We can think of the elements  in $S_{\G}$ as loops in $\G$. The manifold of arrows 
$$(\wedge \G)_1=S_{\G}\times_{G_0}^s G_1$$ is given by the following pullback of manifolds:
\[\xymatrix{
S_{\G}\times_{G_0}^s G_1 \ar[r]^{} \ar[d]_{}& S_{\G}\ar[d]^{\beta} \\ 
G_1\ar[r]^{s} & G_0}\]
where  $\beta$ sends  a loop $g:x\to x$ to its source (or target)  $\beta(g)=x$. Then an arrow between the loop $g:x\to x$ and $g':y\to y$ is given by an arrow $h:x\to y$ such that $hgh^{-1}=g'$.
The source and target maps $s,t\colon S_{\G}\times_{G_0}G_1\to  S_{\G}$ are given by $s(g,h)=g$ and $t(g,h)=hgh^{-1}$.

If $\cK\simM \G$, then $\wedge\cK\simM \wedge\G$ and there is a well defined notion of inertia for orbifolds. The orbifold $\wedge\X$ is determined by the inertia groupoid $\wedge\G$ corresponding to any presentation $\G$ of $\X$.

If $\G=G\ltimes M$ is a translation groupoid, then the inertia groupoid is also a translation groupoid given by 
\[(\wedge \G)_0=\{(g,x)\,|\,x\in M, g\in G, gx=x\}\] and 
the action of $G$  given by $h\cdot(g,x)=(hx, hgh^{-1})$.

 \begin{example} \label{dihedral} 
 \begin{enumerate}
 \item The inertia orbifold for the teardrop orbifold $\X$ in example \ref{tear} is given by the disjoint union of itself and $2$ copies of the point $N$. Then the orbifold category of the inertia orbifold is given by $cat \wedge\X =\cat \X + 2=4$.
 
 \begin{figure}
\includegraphics[height=1in]{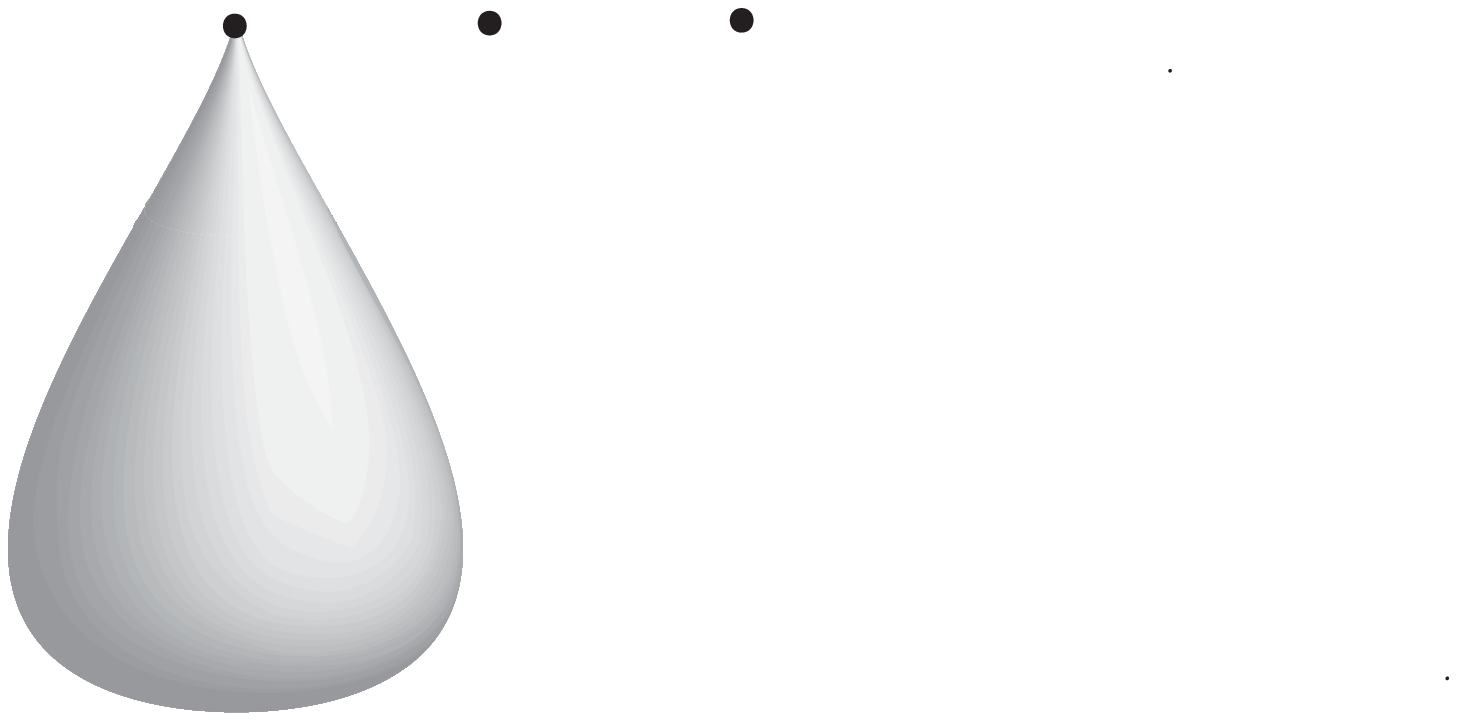}
\caption{}
\label{5}
\end{figure}

\item 
Given  the action 
    of the dihedral group $$D_8=\{1,\rho, \rho^2,  \rho^3, \sigma, \sigma 
    \rho, \sigma\rho^2, \sigma\rho^3\}$$ on the sphere $S^2$ where $\sigma$ 
    acts by reflection and $\rho$ by rotation about the $z$-axis, consider the translation groupoid $\G=D_8\ltimes S^2$ defining the orbifold $\X$. Both poles $N$ and $S$ in $G_0=S^2$ are fixed by the action, then they have isotropy $G_N=G_S=D_8$. There are four great circles whose points have isotropy $\Z_2$ generated by each of the symmetries  $\sigma, \sigma 
    \rho, \sigma\rho^2$ and $\sigma\rho^3$.
     The manifold of objects $S_{\G}$ of the inertia groupoid $\wedge \G$ is given by the disjoint union of $S^2$ plus $3$ copies of each $N$ and $S$ corresponding to the rotations $\rho, \rho^2$ and $ \rho^3$ and the  four great circles with non-trivial isotropy as shown in figure \ref{D}

   \begin{figure}
\includegraphics[height=1in]{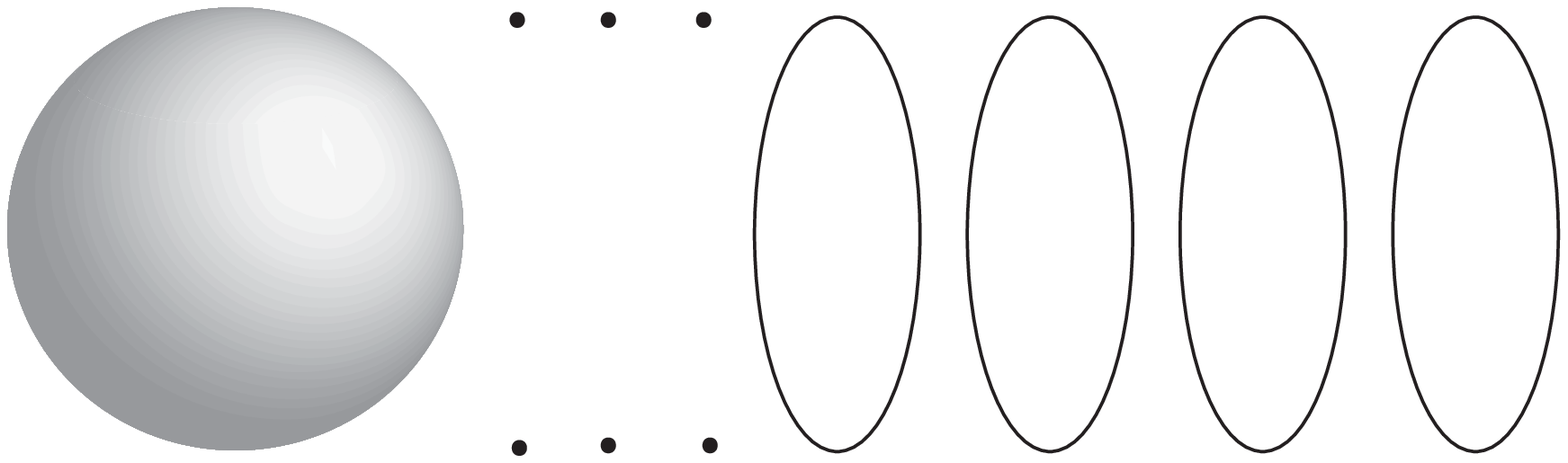}
\caption{}
\label{D}
\end{figure}
    
 The orbifold $\X$ defined by $\G$ is a disk orbifold with silvered boundary and two dihedral singular points on the boundary. The inertia orbifold $\wedge \X$ given by the action of $G=D_8$ on $S_{\G}$ is the disjoint union of the orbifolds shown in figure \ref{D4}:
    \begin{figure}
\includegraphics[height=1in]{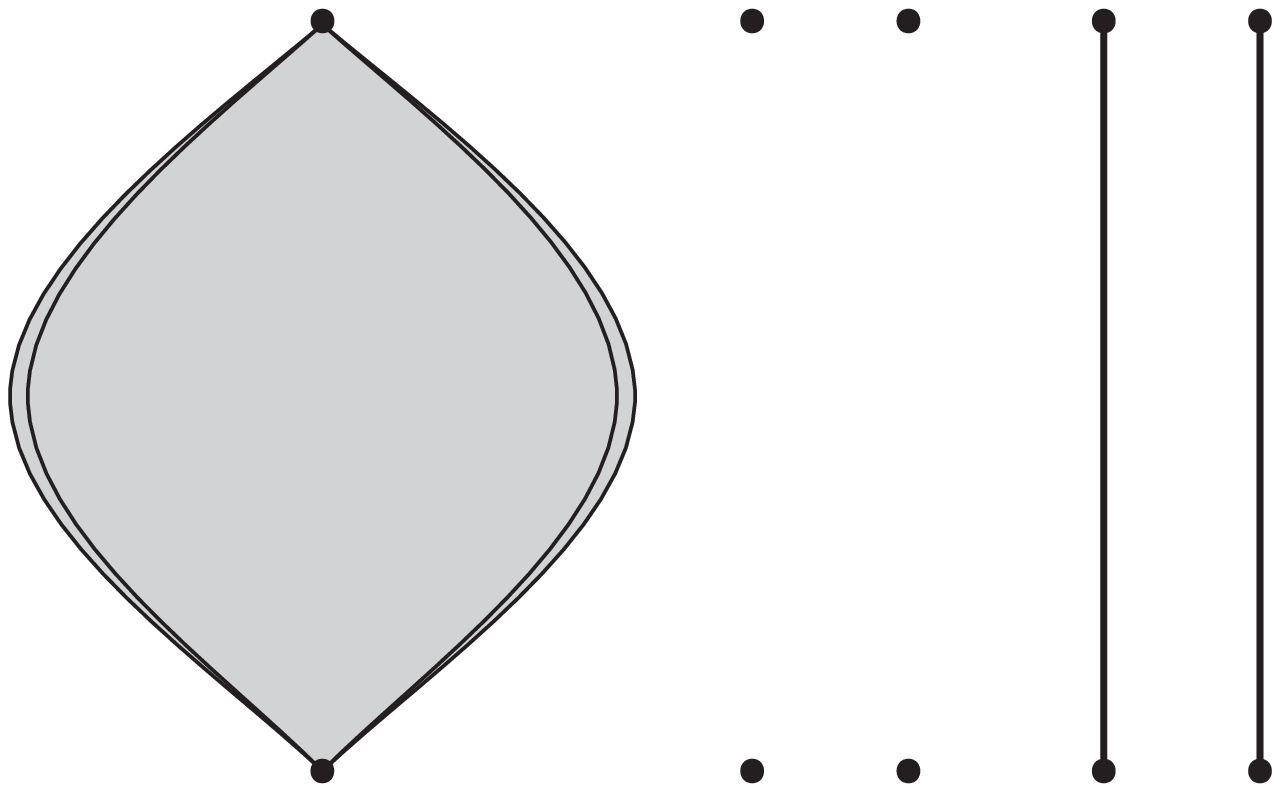}
\caption{}
\label{D4}
\end{figure}
 Consider the covering $\{\U, \V\}$ by subgroupoids of $\G$ given by $U=S^2-N$ and $V=S^2-S$. This covering is categorical and $\cat \X=2$. The category of each of the  1-dimensional orbifolds in the decomposition is $2$. Then the category of the inertia orbifold is $\cat \wedge \X=\cat \X+4+2+2=10$.

\end{enumerate}
 \end{example}

Let $G^i$ be a $\G$-saturated connected component in $S_{\G}$. If $\G^i$ is the full subgroupoid over $G^i$, we have that the partition of $S_{\G}=\bigsqcup G^i$ induces a decomposition of the inertia groupoid:
$$\wedge\G =\bigsqcup \G^i$$ as well as a decomposition of the inertia orbifold, $\wedge\X =\bigsqcup \X^i$. The components $G^i\subset S_{\G}$ besides $G_0$ are called  {\it twisted sectors} whilst $G_0$ is the {\it untwisted sector}. The corresponding $\G^i\subset \G$ and $\X^i\subset \X$ are also called twisted sectors. 

\begin{prop} If  $\X^i$ is a twisted sector of the orbifold $\X$, then $\cat\X\ge \cat\X^i$. 
\end{prop}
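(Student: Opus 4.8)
The plan is to transport a minimal $\G$-categorical cover of $\G$ to $\G^i$. Write $\beta\colon S_{\G}\to G_0$ for the footpoint map, so that $\G^i=(\wedge\G)|_{G^i}$, and recall that a $\G$-saturated connected component $G^i$ is a $\G$-invariant union of connected components of $S_{\G}$; in particular $G^i$ is clopen in $S_{\G}$ and $|\G^i|=|\X^i|$ is connected. Let $\{\U_1,\dots,\U_n\}$ be a $\G$-categorical cover of $\G$ with $n=\cat\G$, the $U_j\subset G_0$ invariant open, and set $W_j=\beta^{-1}(U_j)\cap G^i$ and $\cW_j=\G^i|_{W_j}$. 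Then $W_j$ is open (preimage of an open set intersected with a clopen set), it is invariant in $\G^i$ (because $U_j$ is invariant and $G^i$ is $\G$-saturated), and $\bigcup_jW_j=G^i$ because $\bigcup_jU_j=G_0$. So it is enough to show that each $\cW_j$ is $\G^i$-categorical; then $\{\cW_1,\dots,\cW_n\}$ is a $\G^i$-categorical cover and $\cat\X^i=\cat\G^i\le n=\cat\G=\cat\X$.

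The mechanism is that the inertia construction is a $2$-functor on $\HH(W^{-1})$. The facts to check are: $\wedge\I_S\cong\I_S$, since the only non-unit arrows $r_i$ of $\I_S$ are not loops, so $\wedge$ sends a strict homotopy $\cK\times\I_S\to\G$ to a strict homotopy $\wedge\cK\times\I_S\to\wedge\G$; $\wedge$ sends an essential equivalence $\e$ to an essential equivalence $\wedge\e$ (essential surjectivity and full faithfulness of $\e$ carry over, using that $\e$ is a diffeomorphism on the relevant arrow spaces), hence sends essential homotopy equivalences to essential homotopy equivalences; and $\wedge$ commutes with restriction to an invariant set, $\wedge(\G|_U)=(\wedge\G)|_{\beta^{-1}(U)}$. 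Applying $\wedge$ to the $2$-morphism of Definition~\ref{d} witnessing that $\U_j$ is $\G$-categorical produces a $2$-morphism in $\HH(W^{-1})$ exhibiting $i_{\wedge\U_j}\colon(\wedge\G)|_{\beta^{-1}(U_j)}\to\wedge\G$ as Morita homotopic to a generalized map with image in $\wedge(\cO_j^{K_j})=(\wedge\G)|_{\beta^{-1}(\cO_j)}$, where $\cO_j$ is the orbit with $c_j(\U_j')\subset\cO_j^{K_j}$. Now restrict the whole $2$-morphism along the inclusion $\G^i=(\wedge\G)|_{G^i}\hookrightarrow\wedge\G$, i.e. replace every groupoid in it by its full subgroupoid over the preimage of $G^i$. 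This is legitimate: the restriction of an essential equivalence or of a strict homotopy equivalence to the preimage of an invariant clopen set is again one, and---crucially---since a $\G$-path that starts in $G^i$ stays in $G^i$ (a continuous path of objects cannot leave a connected component and an arrow cannot leave a saturated set), the preimages of $G^i$ along all the homotopies in the diagram are consistent. The outcome is a $2$-morphism showing that $i_{\cW_j}\colon\cW_j\to\G^i$ is Morita homotopic to a generalized map $\cW_j\gets\cdot\to\G^i$ whose image lies in $(\wedge\G)|_{\beta^{-1}(\cO_j)\cap G^i}$.

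It remains to prove that $(\wedge\G)|_{\beta^{-1}(\cO_j)\cap G^i}$ is $\G^i$-categorical; composing the two Morita homotopies then shows $\cW_j$ is $\G^i$-categorical and finishes the proof. This groupoid is a finite disjoint union of orbit groupoids of $\G^i$, all lying over the single orbit $\cO_j$ of $\G$ and all with finite isotropy (as $\G$ is an orbifold groupoid and $\cO_j^{K_j}\simM\bullet^{K_j}$, so $\wedge(\cO_j^{K_j})\simM K_j\ltimes K_j$). I expect this to be the real obstacle: it is the groupoid version of the elementary fact that a finite subset of a connected space is null-homotopic. Since $|\G^i|$ is connected, the natural argument slides all of these orbits together, along $\G^i$-paths, onto a single one of them of maximal isotropy type---Proposition~\ref{inj} forcing the target to be one whose isotropy group receives (an injection of) each of the others---thereby producing a Morita homotopy from the inclusion of $(\wedge\G)|_{\beta^{-1}(\cO_j)\cap G^i}$ to a generalized constant map into $\G^i$. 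Making this last step precise, namely exhibiting the connecting $\G^i$-paths inside $G^i$, checking the isotropy constraints, and assembling the data into a genuine $2$-morphism of $\HH(W^{-1})$, is where the bulk of the work lies; the rest is the bookkeeping of the first two paragraphs.
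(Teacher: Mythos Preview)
Your approach and the paper's are the same at the core: pull a $\G$-categorical cover back along the footpoint morphism $\beta_i\colon\G^i\to\G$. The paper does this by forming the fibered products $\V=p_1(\G^i\times_\G\U)$ and $\V'=\G^i\times_\G\U'$ directly, whereas you package the same construction as applying $\wedge$ as a $2$-functor to the whole Morita-homotopy diagram and then restricting to the clopen piece $G^i$. The paper then simply asserts in one line that the resulting $\V\leftarrow\V'\to\G^i$ is a generalized constant map and stops there.

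You are right to pause at that point. The image of the projection $\V'\to\G^i$ lies in $\beta_i^{-1}(\cO)\cap G^i$, and $|\beta_i|\colon|\G^i|\to|\G|$ need not be injective; for instance, take $\G=S_3\ltimes S^1$ with the normal $\Z_3$ acting trivially and $\Z_2$ by a half-turn---the unique nontrivial twisted sector then has $|\beta_i|$ a genuine double cover of the circle. So your third paragraph is not optional polish but exactly the step that is missing, in the paper's proof as well as in yours. Your proposed fix, however, is more than bookkeeping. The $\G^i$-isotropy at a loop $g\in G_x$ is the centralizer $C_{G_x}(g)$; the orbits sitting in $\beta_i^{-1}(\cO)\cap G^i$ therefore carry isotropies $C_K(g_1),\dots,C_K(g_r)$ for several non-conjugate $g_j\in K$, and nothing forces one of these centralizers to contain all the others, so ``slide onto the one of maximal isotropy type'' does not in general select a target, while Proposition~\ref{inj} blocks any deformation that would shrink isotropy along the way. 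Until you either show that $|\beta_i|$ is injective for orbifold groupoids (it is in all of the paper's worked examples, which may be why the issue is invisible there) or produce a single $\G^i$-orbit into which all of $(\wedge\G)|_{\beta^{-1}(\cO_j)\cap G^i}$ can be deformed, the argument is not complete.
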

\begin{proof}
If $\U$ is a $\G$-categorical subgroupoid for $\G$, consider the subgroupoid $\V=p_1(\G^i\times_{\G} \U)$ in $\G^i$ given by the following fibered product of groupoids:
$$ \xymatrix{
 \G^i\times_{\G}\U \ar[d]_{} \ar[r]^{p_1}&\G^i \ar[d]^{\beta_i}\\
 \U \ar[r]_{i_{\U}}& \G }$$
where $\beta_i$ is the restriction to $\G^i$ of $\beta\colon \wedge \G\to \G$ given by $\beta(g)=s(g)$ on objects and $\beta(g,h)=h$ on arrows. The inclusion functor $i_{\U}\colon \U\to \G$ is Morita homotopic to a generalized constant map $\displaystyle\U\xleftarrow{\e}\U'\xrightarrow{c}\G$. Consider the fibered product $\V'=\G^i\times_{\G} \U'$. We have that the inclusion $i_{\V}\colon \V\to \G^i$ is Morita homotopic to a generalized constant map $\displaystyle\V\xleftarrow{}\V'\xrightarrow{}\G^i$.
\end{proof}

\subsection{Weighted category}\label{6}

In the classical Lusternik-Schnirelmann theory for topological spaces, we count the amount of categorical sets required to cover the space. There are no distinctions between the categorical sets since all of them factor through a point. In the groupoid context, we count subgroupoids which factor through point groupoids.
 We are going to distinguish our point groupoids by their {\it weights} and count categorical subgroupoids together with  the weights of their associated point groupoids. This idea is inspired by the work of Tom Leinster on the Euler characteristic of a finite category \cite{Tom} and based on the string-theoretic orbifold Euler characteristic \cite{Wi,At,adem}.
 
 Let $\G$ be an orbifold groupoid and $\U$ be a $\G$-categorical subgroupoid. Let $K$ be a group of {\it minimal order} such that $\U$ 
 factors through $\po$. We define the weight of $\U$ as the number $k$ of conjugacy classes in $K$, $w(\U)=k$. If $K$ is abelian, then the weight of $\U$ is the order of $K$,  $w(\U)=|K|$.

A  covering $\{\U_1,\ldots, \U_n\}$ of $\G$ is {\it minimal } if $\U_i$ is $\G$-categorical and $\cat \G=n$. Given a minimal covering $\{\U_1,\ldots, \U_n\}$ and its associated set of weights $\{w_1,\ldots, w_n\}$ we propose the following 

\begin{definition}    The {\it weighted} LS category, $w\cat \G$, of a Lie groupoid $\G$ is the least value of the sum:
$$\sum_{i=1}^n w_i$$ for all  $\{\U_1,\ldots, \U_n\}$ minimal coverings.

 \end{definition}
 
 \begin{example}
 \begin{enumerate}
 \item
Consider the translation groupoid $\G=S^1\ltimes S^3$ defining the teardrop orbifold $\X$ as in example \ref{tear}. 
For the categorical covering $\{\U, \V\}$ by subgroupoids of $\G$ given by $U=S^3-T_N$ and $V=S^3-T_S$ we have that the associated set of weights is $\{1, 3\}$. This covering is minimal and gives the least sum, then $w\cat \G=4$.
\item For  the translation groupoid  $\G=D_8\ltimes S^2$, consider the categorical covering $\{\U, \V\}$ by subgroupoids of $\G$ given by $U=S^2-N$ and $V=S^2-S$ as in example \ref{dihedral} (2). Both subgroupoids $\U$ and $\V$ factor through the group $D_8$. The conjugation classes in $D_8$ are given by $\{\{1\}, \{\rho, \rho^3\},\{\rho^2\},\{\sigma, \rho^2\sigma\},\{\rho\sigma, \rho^3\sigma\}\}$, then the class number is $5$.
Therefore, a the set of weights associated to this covering is $\{5, 5\}$. We have that $w\cat \G=10$.
\end{enumerate}
 \end{example}

We will show that the weighted category of an orbifold groupoid is an invariant of Morita homotopy type, then in particular yields a well defined invariant for orbifolds: $w\cat \X=w\cat \G$ for any groupoid presentation $\G$. Moreover, the weighted category for orbifolds is an invariant of orbifold homotopy type.

 \begin{prop} If $\cK$ and $\G$ are orbifold groupoids with $\cK\simeq\G$, then $w\cat \cK=w\cat \G$.
  \end{prop}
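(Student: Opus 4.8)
The plan is to imitate the proof of Proposition~\ref{morita} (invariance of $\cat$ under Morita homotopy type), enhancing the bookkeeping to keep track of weights. The key point is that in that proof, given a $\cK$-categorical covering $\{\U_1,\dots,\U_n\}$ of $\cK$, we produced a $\G$-categorical covering $\{\V_1,\dots,\V_n\}$ of $\G$ of the \emph{same cardinality}, where each $\V_i$ arose as $\delta p_3'(\U_i\times_{\cK}\cJ')$ and the generalized constant map witnessing that $\V_i$ is $\G$-categorical had image in $\cO^{K_i'}$ with $K_i'\approx K_i$ (this is exactly the content of Lemma~\ref{lema}). So first I would observe that a diffeomorphism of groups $K_i\approx K_i'$ is in particular a group isomorphism, hence preserves order and induces a bijection on conjugacy classes; therefore $w(\V_i)\le w(\U_i)$, since $\U_i$ factors through $\po[K_i]$ forces $\V_i$ to factor through $\po[K_i']$ and $K_i'$ has the same class number (and the same order) as $K_i$, so the group of minimal order through which $\V_i$ factors is no larger.

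Second, I would check that the correspondence $\{\U_i\}\mapsto\{\V_i\}$ sends minimal coverings to minimal coverings: this is immediate because it preserves cardinality and $\cat\cK=\cat\G$ by Proposition~\ref{morita}. Hence for every minimal covering $\{\U_i\}$ of $\cK$ with weights $\{w_i\}$ we obtain a minimal covering of $\G$ with weights $\{w_i'\}$, $w_i'\le w_i$, whence
$$
w\cat\G\ \le\ \sum_{i=1}^n w_i'\ \le\ \sum_{i=1}^n w_i .
$$
Taking the infimum over minimal coverings $\{\U_i\}$ of $\cK$ gives $w\cat\G\le w\cat\cK$.

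Third, the reverse inequality: since $\cK\simeq\G$ is a symmetric relation (Morita homotopy equivalence is an equivalence relation, as noted after Proposition~\ref{BF3}), the very same argument run with the roles of $\cK$ and $\G$ interchanged yields $w\cat\cK\le w\cat\G$. Combining, $w\cat\cK=w\cat\G$. As a corollary, since Morita equivalence implies Morita homotopy equivalence, $w\cat$ is invariant under Morita equivalence, so $w\cat\X:=w\cat\G$ is well defined for an orbifold $\X$ with presentation $\G$; and since orbifold homotopy is a Morita homotopy between presentations, $w\cat$ is an orbifold homotopy invariant.

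The main obstacle is the inequality $w(\V_i)\le w(\U_i)$, i.e.\ making sure the weight cannot \emph{increase} under the transfer. The subtlety is that $w(\U_i)$ is defined via a group $K_i$ of \emph{minimal} order through which $\U_i$ factors, and one must confirm that the group produced for $\V_i$ in Lemma~\ref{lema} is isomorphic to \emph{that} minimal $K_i$ and not to some larger group that happens to contain an orbit isotropy. Here one uses that the essential equivalences appearing in the construction induce diffeomorphisms (hence isomorphisms) on isotropy groups together with homeomorphisms on orbit spaces (Lemma~\ref{lema} and Remark~\ref{isotropy}), so the orbit groupoid through which $\V_i$ factors is Morita equivalent to $\po[K_i']$ with $K_i'\cong K_i$; minimality on the $\cK$-side then transfers verbatim. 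Once this is pinned down, the rest is a routine infimum argument and the symmetry observation.
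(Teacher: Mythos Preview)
Your proposal is correct and follows essentially the same route as the paper: transfer a minimal covering via the construction of Proposition~\ref{morita}, invoke Lemma~\ref{lema} to see that the orbit groupoid through which each $\V_i$ factors has isotropy isomorphic to the corresponding $K_i$, and conclude that weights are preserved. The paper's proof is terser---it asserts directly that $\{w_1,\dots,w_n\}$ is also the (minimal) set of weights for $\{\V_1,\dots,\V_n\}$ and jumps to equality---whereas you argue via two inequalities and symmetry, and you explicitly flag and resolve the minimality subtlety that the paper leaves implicit.

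One small wrinkle in your first paragraph: from ``$\V_i$ factors through $\bullet^{K_i'}$ with $K_i'\cong K_i$'' you only get that the \emph{order} of the minimal group for $\V_i$ is at most $|K_i|$, not immediately that its \emph{class number} is at most that of $K_i$ (smaller groups can have more conjugacy classes). You correctly identify and close this gap in your final paragraph by arguing that minimality itself transfers under the equivalence---so in fact $w(\V_i)=w(\U_i)$, not merely $\le$. With that in place your inequality-plus-symmetry argument goes through, and indeed the symmetry step is then redundant; this is presumably why the paper asserts equality outright.
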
 
  \begin{proof}
  Let $w\cat \cK=m$ and let $\{\U_1,\ldots, \U_n\}$ be a minimal $\cK$-covering realizing the least sum, then $m=w_1+\ldots +w_n$ where $\{w_1,\ldots, w_n\}$  is the set of weights associated to $\{\U_1,\ldots, \U_n\}$. We construct a $\G$-minimal covering $\{\V_1,\ldots, \V_n\}$ as in the proof of Proposition \ref{morita}. By Lemma \ref{lema} we have that $\{w_1,\ldots, w_n\}$  is also a set of weights for $\{\V_1,\ldots, \V_n\}$ and it is minimal with this property. Then  $w\cat \G=w_1+\ldots +w_n=m$.
  \end{proof}
  
  We conjecture that the weighted category of an orbifold groupoid coincides with the (unweighted) category of its inertia groupoid, $w\cat \G=\cat \wedge \G$.

\section{Critical points}
Classically, the LS category of a manifold is a lower bound for the number of critical points of a smooth function under certain conditions. We propose a generalization of the notion of critical point for generalized maps and prove that the LS theorem holds  for {\it orbifold groupoids}.

Let  $\phi:\cL\to \G$ be a morphism of Lie groupoids. A point $x\in L_0$ is a {\it critical point} for $\phi$ if $x$ is critical for the object part of the functor, $(d\phi_0)_x=0$.

\begin{prop}\label{p1}
If  $x\in L_0$ is a critical point for $\phi$, then:
\begin{enumerate}
\item for all $y\in \cO(x)$, $y$ is a critical point for $\phi$ and 
\item for all $g\in L_1$ with $s(g)=x$,  $g$ is a critical point (arrow) for $\phi_1$
\end{enumerate}
\end{prop}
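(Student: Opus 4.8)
The plan is to transport criticality from $x$ to the rest of the orbit $\cO(x)$ by turning each arrow $g\colon x\to y$ into a local diffeomorphism of $L_0$ and then invoking functoriality of $\phi$. First I would recall the standard fact that every arrow lies in a local bisection: given $g\in L_1$ with $s(g)=x$ and $t(g)=y$, there is a submanifold $B\subset L_1$ through $g$ with $\dim B=\dim L_0$ on which both $s$ and $t$ restrict to diffeomorphisms onto open neighborhoods $U_x\ni x$ and $U_y\ni y$ (choose $T_gB$ to be a linear complement of $\ker(ds)_g$ that is also transverse to $\ker(dt)_g$, which exists because each kernel has codimension $\dim L_0$, and shrink $B$). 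Setting $\sigma=(s|_B)^{-1}\colon U_x\to B$ and $\rho_B=t\circ\sigma\colon U_x\to U_y$, the map $\rho_B$ is a diffeomorphism with $\rho_B(x)=y$, and for each $z\in U_x$ the arrow $\sigma(z)\colon z\to\rho_B(z)$ exhibits $z$ and $\rho_B(z)$ as lying in the same $\cL$-orbit.

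For part (1): since $\phi$ is a functor it maps $\cL$-orbits into $\G$-orbits, and in the situation in which the proposition is applied---the Lusternik--Schnirelmann theorem, where the target is $\R$, a unit groupoid whose orbits are points---this forces $\phi_0$ to be constant on each $\cL$-orbit. Hence $\phi_0\circ\rho_B=\phi_0$ on $U_x$; differentiating at $x$ yields $(d\phi_0)_y\circ(d\rho_B)_x=(d\phi_0)_x=0$, and since $(d\rho_B)_x$ is invertible we get $(d\phi_0)_y=0$. Because every $y\in\cO(x)$ is of the form $t(g)$ with $s(g)=x$, this proves (1).

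For part (2): given any $g\in L_1$ with $s(g)=x$, functoriality gives $s_{\G}\circ\phi_1=\phi_0\circ s_{\cL}$ and $t_{\G}\circ\phi_1=\phi_0\circ t_{\cL}$, so differentiating at $g$,
\[
(ds_{\G})_{\phi_1(g)}\circ(d\phi_1)_g=(d\phi_0)_x\circ(ds_{\cL})_g=0,\qquad
(dt_{\G})_{\phi_1(g)}\circ(d\phi_1)_g=(d\phi_0)_{t(g)}\circ(dt_{\cL})_g=0,
\]
the second equality by part (1) since $t(g)\in\cO(x)$. When the target is a unit groupoid---so $s_{\G},t_{\G}$ are identities and $\phi_1=\phi_0\circ s_{\cL}$---this already gives $(d\phi_1)_g=0$, and in general it shows that $g$ joins two critical points and is therefore an arrow of the full subgroupoid over the critical set. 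The one step I would be most careful about is the identity $\phi_0\circ\rho_B=\phi_0$: it rests on $\phi_0$ being constant along $\cL$-orbits, which is automatic when the target is a manifold (the case relevant to the theorem) but would fail for an arbitrary target groupoid, so there one restricts to maps with that property; everything else is the chain rule together with the existence of local bisections.
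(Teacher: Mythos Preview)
Your argument is correct in the case that matters (target a unit groupoid such as $\R$), and you are right to flag that $\phi_0\circ\rho_B=\phi_0$ needs $\phi_0$ constant on orbits. The paper takes a different, more economical route: it proves (2) first and then deduces (1), with no bisections at all. From $s_{\G}\circ\phi_1=\phi_0\circ s_{\cL}$ one gets $(ds_{\G})_{\phi_1(g)}\circ(d\phi_1)_g=0$, and the paper cancels $(ds_{\G})_{\phi_1(g)}$ on the left to obtain $(d\phi_1)_g=0$; then $(d\phi_0)_{t(g)}\circ(dt_{\cL})_g=(dt_{\G})_{\phi_1(g)}\circ(d\phi_1)_g=0$, and surjectivity of $(dt_{\cL})_g$ yields $(d\phi_0)_{t(g)}=0$, which is (1). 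So the paper runs $(2)\Rightarrow(1)$ rather than your $(1)\Rightarrow(2)$, trading the local-bisection machinery for pure chain rule. Note, though, that the left-cancellation step requires $(ds_{\G})_{\phi_1(g)}$ to be injective, i.e.\ $\G$ \'etale---so the paper is implicitly in the same restricted setting you identified, just without saying so (for $\cL=\G=\Pair(\R)$ with $\phi_0(x)=x^2$, $\phi_1(x,y)=(x^2,y^2)$, the point $0$ is critical but its orbit is all of $\R$ and both conclusions fail). What the paper's order buys is that once (2) holds, (1) follows for free from the submersion $t_{\cL}$ with no further hypothesis; your bisection argument for (1) is geometrically transparent but actually needs the slightly stronger assumption that $\G$ has singleton orbits, not merely that it is \'etale.
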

\begin{proof}
Let $g\in L_1$ be an arrow $g\colon x\to y$. From the following diagram 
\[\xymatrix{
L_1 \ar[r]^{\phi_1} \ar[d]_s \ar@<1ex>[d]^t& G_1\ar[d]_s \ar@<1ex>[d]^t\\ 
L_0 \ar[r]^{\phi_0} & G_0}\]
we have that $d(s\phi_1)_g=d(\phi_0s)_g$. Then 
$$(ds)_{\phi_1(g)}\circ (d\phi_1)_g=(d\phi_0)_{s(g)}\circ (ds)_g=0$$
since $x=s(g)$ is critical for $\phi_0$. Since $s$ is a submersion, we have that  $(ds)_{\phi_1(g)}$ has maximal rank and then $(d\phi_1)_g=0$. Thus, $g$ is  critical for $\phi_1$. Moreover, this implies that $(dt)_{\phi_1(g)}\circ (d\phi_1)_g=0$. Since $d(t\phi_1)_g=d(\phi_0t)_g$, then $(d\phi_0)_{t(g)}\circ (dt)_g=0$. We have that $(d\phi_0)_{t(g)}=0$ since $t$ has maximal rank. Then $y=t(g)$ is critical for $\phi_0$.
\end{proof}

Therefore, if $x$ is a critical point for $\phi$, we will write $(d\phi)_{\cO^K}=0$ where $\cO$ is the orbit of $x$ and $K$ the isotropy group at $x$.

\begin{definition}  Let $\phi:\cL\to \G$ be a morphism of Lie groupoids. An orbit groupoid $\cO^K\subset \cL$ is a {\it critical subgroupoid} if $(d\phi)_{\cO^K}=0$.
\end{definition}

\begin{prop} \label{p2}
Let $\phi, \psi:\cL\to \G$ be two equivalent morphisms, $\phi\sim_T\psi$. The orbit groupoid $\cO^K$ is a critical subgroupoid for $\phi$ if and only if $\cO^K$ is a critical subgroupoid for $\psi$.
\end{prop}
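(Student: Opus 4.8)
The plan is to use that a natural transformation $T\colon\phi\to\psi$ is precisely a smooth map $T\colon L_0\to G_1$ satisfying $s\circ T=\phi_0$, $t\circ T=\psi_0$ together with the naturality identity $\psi(h)T(s(h))=T(t(h))\phi(h)$ on arrows, and that, $\G$ being a groupoid, $i\circ T$ is an inverse natural transformation $\psi\to\phi$ (its naturality identity is exactly that of $T$ rewritten). The latter makes the assertion symmetric in $\phi$ and $\psi$, so it suffices to prove one implication: if $\cO^K$ is a critical subgroupoid for $\phi$ then it is one for $\psi$. By Proposition~\ref{p1}, criticality of $\cO^K$ for $\psi$ is equivalent to $(d\psi_0)_x=0$ at a single $x\in\cO$ (the vanishing $(d\psi_1)_g=0$ on the arrows $g$ with $s(g)=x$ then follows from the object part exactly as in the proof of Proposition~\ref{p1}), so I would reduce to showing $(d\psi_0)_x=0$ for one chosen $x\in\cO$.

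Fix such an $x$ and put $g=T(x)\colon\phi_0(x)\to\psi_0(x)$. Differentiating $\phi_0=s\circ T$ and $\psi_0=t\circ T$ at $x$ gives $(d\phi_0)_x=(ds)_g\circ(dT)_x$ and $(d\psi_0)_x=(dt)_g\circ(dT)_x$, so the hypothesis $(d\phi_0)_x=0$ says exactly that $\operatorname{im}(dT)_x\subseteq\ker(ds)_g$. To upgrade this to $(d\psi_0)_x=0$ I would pass through a local bisection of $\G$: since $s$ is a submersion there is an open $V\ni\phi_0(x)$ in $G_0$ and a smooth local bisection $b\colon V\to G_1$ with $b(\phi_0(x))=g$, $s\circ b=\mathrm{id}_V$, and $\tau:=t\circ b\colon V\to G_0$ an open embedding carrying $\phi_0(x)$ to $\psi_0(x)$. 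After shrinking a neighbourhood of $x$ so that $\phi_0$ maps it into $V$, the crux is that $b$ may be chosen with $b\circ\phi_0=T$ near $x$; granting this, $\psi_0=t\circ T=\tau\circ\phi_0$ near $x$, hence $(d\psi_0)_x=(d\tau)_{\phi_0(x)}\circ(d\phi_0)_x=0$ since $(d\tau)_{\phi_0(x)}$ is a linear isomorphism. Applying the same argument to $i\circ T$ and a bisection through $g^{-1}$ gives the converse, and a last appeal to Proposition~\ref{p1} propagates the vanishing over all of $\cO^K$.

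The delicate step — and the one I expect to be the main obstacle — is the claim that the bisection $b$ can be arranged so that $b\circ\phi_0=T$ on a neighbourhood of $x$, i.e.\ that $T(y)$ depends only on $\phi_0(y)$ for $y$ near $x$. This is exactly where the structure of $\G$ enters. When $\G$ is étale this is automatic: $s$ is then a local diffeomorphism, so $b$ and $T$ are both local lifts of $\phi_0$ along $s$ that agree at $x$, hence agree near $x$, and the argument runs verbatim; in particular for $\G=\R$, the groupoid relevant to the Lusternik–Schnirelmann theorem, the statement is in fact immediate, since a natural transformation into the unit groupoid $u(\mathbb{R})$ forces $\phi=\psi$. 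For a general (foliation) groupoid I would run the same reasoning after restricting to an étale transversal of $\G$ through $\psi_0(x)$, the point being that the relation $\operatorname{im}(dT)_x\subseteq\ker(ds)_g$ is insensitive to such a restriction, so that the vanishing $(d\phi_0)_x=0$ indeed passes to the transversal and back.
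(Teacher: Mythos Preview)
Your setup is exactly the paper's: differentiate $s\circ T=\phi_0$ and $t\circ T=\psi_0$ at $x$ to obtain $(d\phi_0)_x=(ds)_g\circ(dT)_x$ and $(d\psi_0)_x=(dt)_g\circ(dT)_x$ with $g=T(x)$. At this point the paper is far more direct than you are: it simply asserts that since $s$ has maximal rank, $(ds)_g\circ(dT)_x=0$ forces $(dT)_x=0$, whence $(d\psi_0)_x=(dt)_g\circ 0=0$. That is the entire argument.

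Your caution is well placed, however: ``$s$ has maximal rank'' means $(ds)_g$ is \emph{surjective}, and surjectivity does not give left-cancellation; the inference $(dT)_x=0$ only follows when $(ds)_g$ is also injective, i.e.\ when $\G$ is \'etale. Your bisection approach is an attempt to get around this, but the ``delicate step'' you flag---that $T$ factors through $\phi_0$ near $x$---is not merely delicate, it fails in general. For $\G=\Pair(M)$ one has $G_1=M\times M$, $s=\mathrm{pr}_1$, $t=\mathrm{pr}_2$, and the natural transformation is $T(y)=(\phi_0(y),\psi_0(y))$; here $(d\phi_0)_x=0$ imposes no constraint whatsoever on $(d\psi_0)_x$, so the statement itself breaks for this $\G$. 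Your transversal sketch for foliation groupoids does not repair this either, since $\phi$, $\psi$, $T$ have no reason to land in a chosen complete transversal. In the \'etale case (and in particular for the unit groupoid $\R$, the only target actually used downstream) both the paper's one-line argument and your bisection argument go through; your route is then just a longer version of the paper's, since for \'etale $\G$ one already has $\ker(ds)_g=0$ and $(dT)_x=0$ is immediate.
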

\begin{proof} If $T\colon L_0\to G_1$ is a natural transformation between $\phi$ and $\psi$,  the following diagrams are commutative
\[\xymatrix{
& G_1\ar[d]_s \\ 
L_0 \ar[r]^{\phi_0}\ar[ru]^{T} & G_0}\qquad\xymatrix{
& G_1\ar[d]_t \\ 
L_0 \ar[r]^{\psi_0}\ar[ru]^{T} & G_0}\]
and we have that $d(sT)_x=d\phi_x$ and $d(tT)_x=d\psi_x$. If $x$ is critical for $\phi$, then $ds_{T(x)}\circ dT_x=0$ and since $s$ has maximal rank, $dT_x=0$. Then $d\psi_x=dt_{T(x)}\circ dT_x=0$ and $x$ is critical for $\psi$.
\end{proof}

\begin{prop} \label{p3}
If $\e:\cL\to \cK$ is an essential equivalence, then $\e$ has no critical subgroupoids.
\end{prop}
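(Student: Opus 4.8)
The plan is to show that the object map $\e_0\colon L_0\to K_0$ of an essential equivalence is transverse to the orbits of $\cK$, and to deduce from this that $\e_0$ has everywhere surjective differential, so that in particular $(d\e_0)_x$ is never zero and critical points cannot occur.

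The transversality is where property~(i) of an essential equivalence enters. It says that $a=t\pi_1\colon K_1\times_{K_0}L_0\to K_0$, $(h,x)\mapsto t(h)$, is a surjective submersion. I would compute $da$ at the points $(u(\e_0(x)),x)$, which lie over $\e_0(x)$: using the standard description of the tangent space of the fibre product, the relations $s\circ u=t\circ u=\mathrm{id}$ at a unit, and the fact that $(dt)_{u(y)}$ carries $\cKer(ds)_{u(y)}$ onto the orbit direction $T_y\cO(\e_0(x))\subset T_yK_0$ (this is the anchor), one finds that the image of $da$ at $(u(\e_0(x)),x)$ is exactly $\operatorname{im}(d\e_0)_x+T_{\e_0(x)}\cO(\e_0(x))$. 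Since $a$ is a submersion this image must be all of $T_{\e_0(x)}K_0$; that is, $\e_0$ is transverse to the orbits of $\cK$ at every point of $L_0$.

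Now suppose $\cO^K\subset\cL$ were a critical subgroupoid for $\e$, so that $(d\e_0)_x=0$ for every $x$ in that orbit. Transversality would then force $T_{\e_0(x)}\cO(\e_0(x))=T_{\e_0(x)}K_0$, i.e.\ the $\cK$-orbit of $\e_0(x)$ would be open in $K_0$. For the groupoids to which the statement is applied — unit groupoids of manifolds and, more generally, orbifold groupoids presenting positive-dimensional orbifolds — every orbit has strictly positive codimension in $K_0$, so no orbit is open, and this is the desired contradiction: $\e$ has no critical subgroupoid. (In full generality one must also exclude an open orbit directly: condition~(ii) identifies $\e_0\inv(\cO(\e_0(x)))$ with the $\cL$-orbit of $x$, which would then be open, and since $\e_0$ is locally constant on it its image would be discrete; fully faithfulness would then force the restricted manifold of arrows, hence the orbit itself, to be zero-dimensional, collapsing to the trivial case $\dim L_0=0$.) The one genuinely technical point is the tangent-space computation in the middle step — tracking how $\cKer(ds)$ and the anchor at a unit contribute the orbit direction — while the rest of the argument is formal.
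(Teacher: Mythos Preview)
Your approach is genuinely different from the paper's. The paper's proof is a single sentence: an essential equivalence $\e$ is equivalent by a natural transformation to some $\e'$ whose object map is a submersion; the implicit appeal is then to Proposition~\ref{p2} to conclude that $\e$ itself has no critical orbits. Your argument instead works directly with $\e_0$, extracting from condition~(i) the transversality of $\e_0$ to the $\cK$-orbits. The tangent-space computation you outline is correct: at $(u(\e_0(x)),x)$ the image of $d(t\pi_1)$ is exactly $\operatorname{im}(d\e_0)_x + T_{\e_0(x)}\cO(\e_0(x))$, and surjectivity of $t\pi_1$ forces this to be all of $T_{\e_0(x)}K_0$. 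For targets whose orbits have positive codimension --- in particular for orbifold groupoids, which is where the proposition is actually used --- your argument is complete and self-contained, and has the virtue of not leaning on Proposition~\ref{p2}.

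Your parenthetical attempt to close the open-orbit case in full generality does not work, however, and cannot be repaired: the proposition is simply false without some restriction on $\cK$. Take $\cL=\cK=\Pair(\R)$ and $\e_0(x)=x^3$, $\e_1(a,b)=(a^3,b^3)$. One checks directly that both conditions~(i) and~(ii) hold, so $\e$ is an essential equivalence, yet $(d\e_0)_0=0$. (The same example shows that the step ``$ds$ has maximal rank, hence $dT_x=0$'' in the paper's proof of Proposition~\ref{p2} is not valid when $\ker ds\neq 0$.) So your instinct to restrict to the groupoids actually appearing in the applications is the right one; drop the parenthetical and state explicitly that you are assuming the $\cK$-orbits have positive codimension.
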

\begin{proof} An essential equivalence $\e$ is equivalent by a natural transformation to $\e'$ with $\e'_0$ being a submersion.
\end{proof}

\begin{definition} An orbit groupoid $\cO^K\subset \cJ$ is a {\it critical subgroupoid for the generalized map $\cJ\overset{\e}{\gets}\cL\overset{\phi}{\to}\G$} if there exists a critical subgroupoid $\cO'^{K'}\subset \cL$ for $\phi$ such that $\e(\cO'^{K'})\subset \cO^K$.
\end{definition}
In this case, the groups $K$ and $K'$ are isomorphic.

We will show that the notion of critical subgroupoid is invariant under Morita equivalence and yields a notion of critical point for orbifold morphisms.
\begin{prop} \label{p4}
Let $(\e,\phi)$ and $(\e',\phi')$ be two equivalent generalized maps. The orbit subgroupoid $\cO^K\subset \cJ$ is critical for $(\e,\phi)$ if and only if it is critical for $(\e',\phi')$.
\end{prop}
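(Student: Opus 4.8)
The plan is to reduce the statement to the already-established invariance results for critical subgroupoids under natural transformations (Proposition \ref{p2}) and for essential equivalences (Proposition \ref{p3}), together with the explicit description of a 2-morphism between equivalent generalized maps. Recall that two generalized maps $(\e,\phi)$ from $\cJ$ to $\G$ and $(\e',\phi')$ from $\cJ$ to $\G$ being equivalent means there is a Lie groupoid $\cL$, essential equivalences $u\colon\cL\to\cL_1$ and $v\colon\cL\to\cL_2$ (where $\cL_1,\cL_2$ are the domains of the two generalized maps), and natural transformations $\e u\simT \e' v$ and $\phi u\simT \phi' v$. (In the strict, non-homotopy setting the 2-arrows are honest natural transformations rather than $S$-homotopies.) So I would first fix such a diagram and work entirely with it.

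First I would show that if $\cO^K\subset\cJ$ is a critical subgroupoid for $(\e,\phi)$, it is critical for $(\e',\phi')$; the converse is symmetric. By definition there is a critical subgroupoid $\cP^{L}\subset\cL_1$ for $\phi$ with $\e(\cP^{L})\subset\cO^K$. Because $u$ is an essential equivalence, $u$ restricts to an essential equivalence onto a saturated subgroupoid, and by Proposition \ref{p3} $u$ has no critical subgroupoids; the content I actually need is that there is an orbit subgroupoid $\cQ^{L'}\subset\cL$ with $u(\cQ^{L'})\subset\cP^{L}$ and, since $du$ is a submersion on objects up to a natural transformation (as in the proof of Proposition \ref{p3}), the chain rule gives $(d(\phi u))_{\cQ^{L'}}=(d\phi)_{\cP^L}\circ(du)=0$. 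Thus $\cQ^{L'}$ is a critical subgroupoid for $\phi u$. Now apply Proposition \ref{p2} to the equivalent morphisms $\phi u\simT\phi' v$: $\cQ^{L'}$ is critical for $\phi' v$ as well. Running the submersion argument in the other direction for $v$, there is an orbit subgroupoid $\cQ'^{L''}$ with $v(\cQ'^{L''})\subset$ some orbit subgroupoid of $\cL_2$ on which $d\phi'$ vanishes — i.e. a critical subgroupoid $\cR^{L''}\subset\cL_2$ for $\phi'$ — and one needs to track that its $\e'$-image lands in $\cO^K$.

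The last point is where I expect the main technical bookkeeping to live: chasing the orbit through the zig-zag $\cJ\xleftarrow{\e}\cL_1\xleftarrow{u}\cL\xrightarrow{v}\cL_2\xrightarrow{\e'}\cJ$ and checking that $\e'(\cR^{L''})$ is contained in $\cO^K$ and not merely in some other orbit subgroupoid. Here I would use that $\e u\simT\e' v$: a natural transformation induces the identity on orbit spaces and isomorphisms between isotropy groups, so $|\e u|=|\e' v|$ as maps of orbit spaces, and the orbit of $\e'(\cR^{L''})$ equals the orbit of $\e(u(\cQ^{L'}))\subset\cO^K$. The isotropy statement (the groups $K,L,L',L''$ are all isomorphic) follows because every arrow in the zig-zag is either an essential equivalence (inducing isomorphisms of isotropy by condition (ii) of essential equivalence) or a morphism whose restriction to the relevant orbit is such, as recorded after Proposition \ref{p3} and in Lemma \ref{lema}. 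The main obstacle, then, is not conceptual but organizational: assembling the four applications of "submersion up to natural transformation + chain rule" and the two applications of Proposition \ref{p2} into a clean round trip, while keeping the orbit and isotropy identifications straight. Everything else is routine.
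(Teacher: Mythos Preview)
Your approach is correct and is essentially the paper's argument, just narrated in more detail. The paper fixes the same 2-morphism data (calling the essential equivalences $\alpha,\beta$ in place of your $u,v$), constructs the intermediate orbit in $\cA$ explicitly as $S=p_3(\cO'^{K'}\times_{\cL}\cA)$ via a fibered product, uses $\phi\alpha\sim_T\phi'\beta$ to get $d(\phi\alpha)_S=d(\phi'\beta)_S=0$, and then cancels $d\beta$ by maximal rank to conclude $(d\phi')_{\beta(S)}=0$; the orbit tracking through $\e\alpha\sim_{T'}\e'\beta$ is left implicit.

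One small clean-up: in your step ``running the submersion argument in the other direction for $v$'' you do not need a new orbit $\cQ'^{L''}$ in $\cL$. You already have $\cQ^{L'}$ critical for $\phi'v$; just push it forward and set $\cR:=v(\cQ^{L'})$, then use that $v$ is a submersion (up to a natural transformation) to deduce $(d\phi')_{\cR}=0$. That is exactly the paper's $\cO''^{K''}=\beta(S)$.
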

\begin{proof} 
Since the  following diagram commutes up to natural transformations
$$
\xymatrix{ & 
{\cL}\ar[dr]^{\phi}="0" \ar[dl]_{\e }="2"&\\
{\cJ}&{\cA} \ar[u]_{\alpha} \ar[d]^{\beta}&{\G}\\
&{\cL'}\ar[ul]^{\e'}="3" \ar[ur]_{\phi'}="1"&
\ar@{}"0";"1"|(.4){\,}="7" 
\ar@{}"0";"1"|(.6){\,}="8" 
\ar@{}"7" ;"8"_{\sim_{T}} 
\ar@{}"2";"3"|(.4){\,}="5" 
\ar@{}"2";"3"|(.6){\,}="6" 
\ar@{}"5" ;"6"^{\sim_{T'}} 
}
$$
we have that $(d\phi\alpha)_S=(d\phi'\beta)_S$ for all orbit subgroupoids $S \subset \cA$. Since $\cO^K\subset \cJ$ is critical for $(\e,\phi)$, we have that there exists a subgroupoid $\cO'^{K'}\subset \cL$ for $\phi$ with $\e(\cO'^{K'})\subset \cO^K$ and $d\phi_{{\cO'}^{K'}}=0$. 

Consider the subgroupoid $S=p_3(\cO'^{K'}\times_{\cL} \cA)$ given by the following fibered product of groupoids:
\[\xymatrix{
\cO'^{K'}\times_{\cL} \cA \ar[r]^{p_1}\ar[d]_{p_3}& \cO'^{K'}\ar[d]^{i_{\cO'^{K'}} }\\ 
\cA \ar[r]^{\alpha} & \cL}\]
Then $(d\phi)_{\alpha(S)}\circ d\alpha_S=(d\phi')_{\beta(S)}\circ d\beta_S=0$ and $(d\phi')_{\beta(S)}=0$ since $\beta$ has maximal rank. Taking $\cO''^{K''}=\beta(S)\subset \cL'$ we have that  $\cO^K$ is critical for $(\e',\phi')$.
\end{proof}

Then the notion of critical subgroupoid is well defined for the class of equivalence of the generalized map defining the orbifold morphism.

\subsection{Relative groupoid LS-category}
As in the classical case, we develop a {\it relative} version of the Lusternik-Schnirelmann groupoid category.

\begin{definition} Let $\G$ be a Lie groupoid and  $M\subset G_0$ be an invariant open set. Consider the full subgroupoid $\M$ over $M$. The {\it groupoid LS-category of $\M$ in $\G$, $\cat(\M, \G)$, } is the least number of $\G$-categorical subgroupoids required to cover $\M$.
\end{definition}

Let $\M$ and $\cN$ be two full subgroupoids of $\G$.

\begin{definition} The subgroupoid $\M$ is $\G$-deformable into $\cN$ in $\G$ if there are groupoids $\M'$ and $\cL$ such that the following diagram commutes:
$$
\xymatrix{ & 
{\M}\ar[dr]^{i_{\M}}="0" \ar[dl]_{\id}="2"&\\
{\M}&{\cL} \ar[u]_{u} \ar[d]^{v}&{\G}\\
&{\M'}\ar[ul]^{\eta}="3" \ar[ur]_{\phi}="1"\ar[r]_{}&\cN\ar[u]_{i_{\cN}}
\ar@{}"0";"1"|(.4){\,}="7" 
\ar@{}"0";"1"|(.6){\,}="8" 
\ar@{=>}"7" ;"8"_{H^S} 
\ar@{}"2";"3"|(.4){\,}="5" 
\ar@{}"2";"3"|(.6){\,}="6" 
\ar@{=>}"5" ;"6"^{F^S} 
}
$$
\end{definition}

\begin{remark} A subgroupoid $\U$ is $\G$-categorical if $\U$ is $\G$-deformable into an orbit groupoid $\cO^K$ in $\G$.
\end{remark}

The relative groupoid LS-category for orbifold groupoids satisfy the following properties.
\begin{enumerate}
\item[P1] If $\M\subset\cN$, then $\cat(\M, \G)\le\cat(\cN, \G)$.
\item[P2] $\cat(\M\cup \cN, \G)\le\cat(\M, \G)+\cat(\cN, \G)$.
\item[P3] If $\M$ is $\G$-deformable into $\cN$, then $\cat(\M, \G)\le\cat(\cN, \G)$.
\item[P4] If $W\subset G_0$ is an invariant closed set, then there exists an invariant open set $U$ with $W\subset U$ such that $\cat(\U, \G)=\cat(\cW, \G)$. We will say that the subgroupoid $\U$ is a neighborhood of $\cW$.
\end{enumerate}
In particular, every orbit subgroupoid in an orbifold groupoid has a $\G$-categorical neighborhood. Note that this property does not hold for general Lie groupoids.
\begin{remark} If $\G$ is an orbifold groupoid defining the orbifold $\X$, then for each $x\in G_{0}$ there exist arbitrary small
neighborhoods $U$ of $x$ for which $\U$ is isomorphic to the action groupoid $G_{x}\ltimes U$  \cite{MP}. The subgroupoid $\U$ determines an open set
$|\U|\subseteq |\G|=X$, for which $|\U|$ is the quotient of $U$ by the action of the finite group $G_{x}$. 
\end{remark}

\subsection{Critical subgroupoid $\cK$}

Let $\phi:\G\to \R$ be a morphism of Lie groupoids, where $\R$ is the unit groupoid over the real numbers. Let $\cK$ be the critical subgroupoid of $\phi$, i.e. the subgroupoid given by the union of all  critical orbit subgroupoids: $$\cK=\bigsqcup_{(d\phi)_{\cO^K}=0} \cO^K$$

Let $\phi(K_0)$ be the set of critical values and $R=\R\setminus \phi(K_0)$ the set of regular values. Consider the full subgroupoids  $\G_c=\phi^{-1}(-\infty, c]$ and $\cK_c=\cK\cap \phi^{-1}(c)$. We will impose the following $\G$-deformation conditions on the groupoid $\G$ and the morphism $\phi$:
\begin{enumerate}
  \item[D1] For all $c$ in the interior of the set of regular values $R$ there exists an $\e>0$ such that $\G_{c+\e}$ is $\G$-deformable into $\G_{c-\e}$.
  \item [D2] For any isolated critical value $c$ and $\U$ neighborhood of $\cK_c$, there is an $\e>0$ such that  $\G_{c+\e}\setminus \U$ is $\G$-deformable into $\G_{c-\e}$.
  \item [D3] If $c>\sup \phi(\cK)$ then there is an $\e>0$ such that $\G$ is $\G$-deformable into $\G_{c}$.
\end{enumerate}

We develop a version of the Lusternik-Schnirelmann theorem for the groupoid LS category following the modern approach to the classical theorem by Clapp and Puppe \cite{Monica}.
\begin{prop}[Lusternik-Schnirelmann theorem]
Let $\G$ be an orbifold groupoid and $\phi\colon \G\to \R$ a morphism satisfying the $\G$-deformation conditions. Consider the function $m\colon \R\to \N\cup \{ \infty \}$ given by $m(c)=\cat(\G_c, \G)$. We assert that
\begin{enumerate}
\item the function $m$ is (weakly) increasing;
\item $m$ is locally constant in the interior of the set of regular values $R$;
\item at any isolated critical value $c$ of $\phi$ the function $m$ jumps by $\cat(\cK_c, \G)$ at most and
\item when  $c>\sup \phi(\cK)$ then $m(c)=\cat\G$.
\end{enumerate}
\end{prop}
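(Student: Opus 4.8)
The plan is to deduce the four assertions directly from the elementary properties P1--P4 of the relative groupoid category together with the deformation conditions D1--D3, following the Clapp--Puppe treatment of the classical theorem. Assertion (1) comes for free from monotonicity: for $c\le c'$ one has $\G_c\subset\G_{c'}$, so P1 gives $m(c)=\cat(\G_c,\G)\le\cat(\G_{c'},\G)=m(c')$.

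For (2) I would fix $c$ in the interior of the set of regular values, invoke D1 to get $\e>0$ with $\G_{c+\e}$ $\G$-deformable into $\G_{c-\e}$, and use P3 to obtain $m(c+\e)\le m(c-\e)$; combined with the monotonicity from (1) this forces $m(c-\e)=m(c+\e)$, so $m$ is constant on the neighborhood $[c-\e,c+\e]$ of $c$, which is local constancy there.

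For (3) the decisive input is the neighborhood property P4. Since $\cK_c=\cK\cap\phi^{-1}(c)$ is an invariant closed subgroupoid (intersection of the closed critical set with a level set, which is saturated because $\phi$ takes arrows to units), P4 supplies a $\G$-categorical neighborhood $\U$ of $\cK_c$ with $\cat(\U,\G)=\cat(\cK_c,\G)$. Applying D2 to this $\U$ (using that $c$ is isolated) yields $\e>0$ such that $\G_{c+\e}\setminus\U$ is $\G$-deformable into $\G_{c-\e}$. Covering $\G_{c+\e}$ by $\G_{c+\e}\setminus\U$ and $\U$, properties P1 and P2 give
\[ m(c+\e)\ \le\ \cat(\G_{c+\e}\setminus\U,\G)+\cat(\U,\G), \]
where the first term is at most $m(c-\e)$ by P3 and P1, and the second is $\cat(\cK_c,\G)$; hence $m(c+\e)\le m(c-\e)+\cat(\cK_c,\G)$. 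Together with $m(c-\e)\le m(c)\le m(c+\e)$ from (1), this is exactly the assertion that $m$ jumps by at most $\cat(\cK_c,\G)$ at $c$.

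For (4) I would use D3: when $c>\sup\phi(\cK)$ there is $\e>0$ with $\G$ $\G$-deformable into $\G_c$, so P3 gives $\cat\G=\cat(\G,\G)\le\cat(\G_c,\G)=m(c)$, while $\G_c\subset\G$ and P1 give the reverse inequality, whence $m(c)=\cat\G$. The genuinely routine parts are (1), (2) and (4); the step I expect to require the most care is (3), where one has to be sure that the closed sublevel subgroupoids $\G_c$ and the set $\G_{c+\e}\setminus\U$ are handled correctly --- either by allowing $\cat(-,\G)$ on closed invariant subgroupoids or by the customary passage to a slightly larger open neighborhood with the same category --- so that P1, P2 and P3 legitimately apply, and that the neighborhood $\U$ from P4 can be taken inside the open set produced by D2. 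This is precisely the point at which the hypothesis that $\G$ is an \emph{orbifold} groupoid enters: through the local model $G_x\ltimes U$ it guarantees P4 and hence that the critical orbit subgroupoids comprising $\cK_c$ possess $\G$-categorical neighborhoods.
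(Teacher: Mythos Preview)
Your proposal is correct and follows exactly the approach the paper indicates: the paper's proof is the single line ``As in the classical case, the theorem follows from properties P1--P4 and conditions D1--D3,'' referring to Clapp--Puppe, and you have simply written out that classical argument in full. Your caveats about open versus closed invariant subgroupoids in step (3) are reasonable to flag, but the paper does not address them either.
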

As in the classical case, the theorem follows from properties P1-P4 and conditions D1-D3.
\begin{corollary}\label{LS} If $\G$ is an orbifold  groupoid and $\phi:\G\to \R$ a morphism satisfying the $\G$-deformation conditions, then $$\cat \G\le \sum_{c\in\R} \cat(\cK_c, \G).$$
\end{corollary}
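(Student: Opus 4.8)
The plan is to deduce the corollary from the four properties of the function $m(c)=\cat(\G_c,\G)$ established in the preceding Lusternik-Schnirelmann theorem, exactly as in the classical Clapp--Puppe argument. First I would observe that since $\G$ (as an orbifold groupoid defining a compact orbifold) has a compact orbit space and $\phi$ is continuous on it, the set of critical values $\phi(\cK_0)$ is a finite set $\{c_1<c_2<\cdots<c_r\}$, so in particular every critical value is isolated and $\sup\phi(\cK)=c_r<\infty$. Pick a regular value $c>c_r$. By property (4) of the theorem we have $m(c)=\cat\G$, so it suffices to bound $m(c)$.

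Next I would march up the real line from $-\infty$ to $c$, tracking how $m$ changes. Below $c_1$ the subgroupoid $\G_{c'}$ is empty, so $m(c')=0$ there. By property (2), $m$ is locally constant on each open interval of regular values; by property (1) it is (weakly) increasing; and by property (3), crossing the isolated critical value $c_i$ increases $m$ by at most $\cat(\cK_{c_i},\G)$. Combining these, a finite induction on $i$ gives
\[
m(c)\ \le\ \sum_{i=1}^{r}\cat(\cK_{c_i},\G)\ =\ \sum_{c\in\R}\cat(\cK_c,\G),
\]
the last equality because $\cK_c=\varnothing$ (hence $\cat(\cK_c,\G)=0$) for every $c$ that is not one of the $c_i$. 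Since $m(c)=\cat\G$, this is the desired inequality $\cat\G\le\sum_{c\in\R}\cat(\cK_c,\G)$.

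The only point requiring a little care --- and the place I expect the main obstacle --- is the passage across a critical value, i.e. justifying that the ``jump'' estimate in property (3) can be applied: one needs that each $\cK_{c_i}$ admits a $\G$-categorical neighborhood $\U_i$ (property P4 together with the fact that orbit subgroupoids in an orbifold groupoid have $\G$-categorical neighborhoods), so that $\cat(\G_{c_i+\e}\setminus\U_i,\G)\le m(c_i-\e)$ by condition D2 and $\cat(\U_i,\G)\le\cat(\cK_{c_i},\G)$ by P1 and P3, whence $m(c_i+\e)\le m(c_i-\e)+\cat(\cK_{c_i},\G)$ by subadditivity P2. This is precisely the content of part (3) of the theorem, which we are entitled to assume; so in the end the corollary is a purely formal consequence of parts (1)--(4) once finiteness of the critical set is in hand, and there is no further analytic input needed.
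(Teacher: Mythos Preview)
Your approach is precisely the paper's: the corollary is meant to follow formally from properties (1)--(4) of $m(c)=\cat(\G_c,\G)$ in the preceding proposition, via the standard Clapp--Puppe induction over critical levels. The paper gives no further detail for the corollary, so your write-up is an appropriate expansion.

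One step is wrong, however. You assert that compactness of the orbit space forces the set of critical values $\phi(K_0)$ to be finite; this is false in general---a smooth function on a compact manifold can have infinitely many critical values (finiteness is a Morse-type hypothesis, not a consequence of compactness alone). The standard repair is a case split: if $\sum_{c}\cat(\cK_c,\G)=\infty$ there is nothing to prove; if the sum is finite, then since every nonempty $\cK_c$ contributes at least $1$ to the sum, there are only finitely many critical values, each necessarily isolated, and your induction over the $c_i$ goes through verbatim. Compactness still does work for you, but only at the base of the induction: it ensures $\phi$ attains its infimum, that this infimum is a critical value (hence equals $c_1$), and therefore $\G_{c'}=\varnothing$ and $m(c')=0$ for $c'<c_1$. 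Observe also that compactness is not among the corollary's stated hypotheses; you are in effect supplying a missing assumption without which the printed statement is actually false (take $\G=u(\R)$ and $\phi=\mathrm{id}$: all three deformation conditions hold trivially, the right-hand side is $0$, yet $\cat\G=1$).
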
 
This implies that  $\cat \G $ is a lower bound for the  number of critical orbits of $\phi$.

Given an orbifold $\X$ and an orbifold map $\X\to \R$, we will say that a point $x\in\X$ is {\it critical} if there is a presentation groupoid $\G$ for $\X$ and a presentation generalized map $\G\overset{\e}{\gets}\G'\overset{\phi}{\to}\R$ for  $\X\to \R$ such that $x$ is  the image in the quotient $\G\to |\G|=X$  of some critical orbit subgroupoid $\cO^K\subset \G$ for the generalized morphism  $(\e,\phi)$.

We claim the following version of the Lusternik-Schnirelmann theorem for orbifolds.

\begin{theorem}
Let $\X$ be a compact orbifold and $\X\to \R$ be an orbifold map. Then $cat \X$ is a lower bound for the number of critical points of  $\X\to \R$.
\end{theorem}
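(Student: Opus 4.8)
The plan is to deduce the theorem from the groupoid Lusternik--Schnirelmann inequality in Corollary \ref{LS} by showing that its hypotheses are met for any compact orbifold $\X$ and any orbifold map $\X\to\R$, and then translating the resulting bound on critical orbit subgroupoids into a bound on critical points of $\X$. First I would fix a presentation groupoid $\G$ for $\X$; since $\X$ is compact we may take $\G$ to be an orbifold groupoid with $G_0$ (and hence $|\G|=X$) compact, and choose a presentation generalized map $\G\overset{\e}{\gets}\G'\overset{\phi}{\to}\R$ for the orbifold map. Replacing $\e$ by an equivalent morphism with $\e_0$ a submersion (Proposition \ref{p3} and its proof), and using that $\cat$ and the notion of critical subgroupoid are both invariant under the choices made (Propositions \ref{morita} and \ref{p4}), I may work directly with $\phi\colon\G'\to\R$, whose critical subgroupoid $\cK$ has a decomposition $\cK=\bigsqcup_c\cK_c$ over the critical values $c$.

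The central task is to verify the $\G$-deformation conditions D1--D3 for $\phi$ on the orbifold groupoid $\G'$. This is the step I expect to be the main obstacle, and the one Section 7 is really designed to handle: the idea is to equip the orbifold with an invariant Riemannian metric, form the gradient vector field of $(\e,\phi)$, and integrate its negative flow. Because the ordinary flow does not live inside a single groupoid, one must integrate it as a \emph{multiple $\G$-path} (Definition \ref{mpath}), i.e.\ the integral $\G$-curves are generalized maps $\I\overset{\e}{\gets}\I'_S\overset{\sigma}{\to}\G'$ built from the flow lines on $G'_0$ together with the arrows recording how the flow crosses charts and isotropy. Compactness of $\X$ guarantees that the flow exists for all time and that, away from the critical set, one gets a definite decrease of $\phi$ in bounded time; this yields the $\G$-deformability of $\G'_{c+\e}$ into $\G'_{c-\e}$ across a regular interval (D1), the deformability of $\G'_{c+\e}\setminus\U$ into $\G'_{c-\e}$ after excising a categorical neighborhood $\U$ of $\cK_c$ (D2, using property P4, which holds for orbifold groupoids), and the deformability of all of $\G'$ into $\G'_c$ for $c>\sup\phi(\cK)$ (D3). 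The deformations are realized precisely by the Morita homotopies whose $S$-homotopy data come from these integral $\G$-curves, so they are admissible deformations in the sense required by $\cat(-,\G')$.

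With D1--D3 in hand, Corollary \ref{LS} gives $\cat\G'\le\sum_{c\in\R}\cat(\cK_c,\G')$, and since the sum on the right is finite only when $\cK$ meets finitely many levels, the inequality is nontrivial exactly when $\phi$ has finitely many critical orbits; in that case each $\cat(\cK_c,\G')\ge1$ whenever $\cK_c\neq\emptyset$, so $\cat\G'$ is a lower bound for the number of critical orbit subgroupoids of $(\e,\phi)$. Finally I would pass to $\X$: by definition $\cat\X=\cat\G=\cat\G'$ (Morita invariance, Proposition \ref{morita}), and each critical orbit subgroupoid $\cO^K\subset\G'$ maps, under $\G'\to\G\to|\G|=X$, to a single critical point of $\X\to\R$, distinct critical orbits over distinct values giving distinct points; hence $\cat\X$ bounds below the number of critical points of the orbifold map. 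A small point to dispatch along the way is that the critical points so defined are well defined independently of the presentation, which is exactly Proposition \ref{p4}. The only genuinely delicate ingredient is the construction and regularity of the $\G$-gradient flow as a multiple $\G$-path and the verification that its time-$t$ maps assemble into bona fide Morita homotopies; everything else is a formal consequence of the machinery already set up.
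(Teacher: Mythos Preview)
Your proposal is correct and follows essentially the same route as the paper: verify the $\G$-deformation conditions D1--D3 by constructing the gradient of the orbifold map as a $\G$-vector field, integrating it via multiple $\G$-paths to obtain a $\G$-flow (the paper sets up exactly this machinery---tangent groupoid, $\G$-vector field, integral $\G$-curve, flow---immediately after stating the theorem), and then invoke Corollary~\ref{LS}. Your outline is in fact more explicit than the paper's on several points (the reduction from the generalized map to a strict morphism, the role of compactness in global existence of the flow, and the passage from critical orbit subgroupoids to critical points of $\X$), but the underlying strategy is identical.
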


To prove that an orbifold groupoid $\G$ and a morphism $\phi$ satisfy the $\G$-deformation conditions we will introduce the notion of {\it integral $\G$-curve for $\G$-vector fields}.

We define the {\it tangent groupoid} $T\G$ of the groupoid $\G$ as a groupoid whose manifold of objects and arrows  are $TG_0$ and $TG_1$ respectively. Source and target are given by the differentials of $s,t\colon G_1\to G_0$ which are also submersions: 
\[\xymatrix{
TG_1\ar[d]_{ds} \ar@<1ex>[d]^{dt}\\ 
TG_0}\]

\begin{definition}
A  $\G$-{\it vector field} is a morphism $X\colon \G\to T\G$ such that $X_0\colon G_0\to TG_0$ and $X_1\colon G_1\to TG_1$ are vector fields.
\end{definition}

We will use here the idea of multiple $\G$-paths to define $\G$-curves.

\begin{definition} A $\G$-{\it curve} is a morphism $\sigma\colon \cJ'_S \to \G$ where $\cJ'_S$ is the groupoid given by a subdivision $S=\{r_0\le r_1\le\cdots\le r_n\}$ of an open interval $J$ as defined before.

We write $\sigma=(\al^{j_1}_1,g_1, \al^{j_2}_2 \ldots, \al^{j_{n-1}}_{n-1}, g_{n-1}, \al^{j_n}_n)$ where $\al^{j_1}_1$ and $\al^{j_n}_n$ are maps from half-closed intervals to $G_0$ and $(g_1,\al^{j_2}_2 \ldots, \al^{j_{n-1}}_{n-1}, g_{n-1})$ is a multiple $\G$-path between the orbits of $\al^{j_1}_1(r_0)$ and $\al^{j_n}_n(r_n)$. 
\end{definition}

The {\it velocity} of $\sigma$ is a $\G$-curve $\dot{\sigma}\colon \cJ'_S \to T\G$ given by $$\sigma=(\dot{\al}^{j_1}_1, dg_1, \dot{\al}^{j_2}_2\ldots, \dot{\al}^{j_{n-1}}_{n-1}, dg_{n-1}, \dot{\al}^{j_n}_n)$$
where $dg_i$ maps the velocity vector $\dot{\al}_i(r_i,i,1)$ to the velocity vector $\dot{\al}^1_{i+1}(r_i,i+1,1)$.

The {\it length} $l$ of a $\G$-curve $\sigma$ is the sum of the lengths of the paths $\al^1_i$. We have that 
$$l(\sigma)=\sum_1^n\int_{r_i}^{r_{i+1}}|\dot{\al}_i(r,i,1)|$$
The sum of the lengths of any other branch of the multiple $\G$-path will give the same length. The {\it distance} between two orbits $\cO$ and $\cO'$ is the infimum of the the lengths of $\G$-paths between the orbifold subgroupoids $\cO^K$ and ${\cO'}^{K'}$. This distance defines a metric which is complete if the orbifold is compact.

Let $X$ be a $\G$-vector field on $\G$.  We will say that $\sigma$ is an {\it integral $\G$-curve} for $X$ if $\dot{\sigma}=X\circ \sigma$. If $0$ is in $J$, we call $\sigma({\bf{0}})$ the initial subgroupoid of the integral $\G$-curve.

For a compact orbifold groupoid, we have that for each orbit subgroupoid $\cO^K$ there is a maximal integral $\G$-curve $\sigma_{\cO^K}$   for $X$ whose domain is given by $J=(-\infty, +\infty)$ and the initial subgroupoid is $\cO^K$.

\begin{definition}
The {\it flow} of $X$ is the morphism $\varphi\colon \G\times \R'_S\to \G$ given by $\varphi(x,(r,i,j))=\sigma_{\cO^K}((r,i,j))$ with $x\in\cO$ on objects and $\varphi(g,r_i)=\sigma_{\cO^K}(r_i)$ with $g\in K$ on arrows.
\end{definition}
We have that $\varphi(x,(r'+r,i+i',j))=\varphi(\varphi(x,(r,i,j)),(r',i',j)))$ on  objects and similarly on arrows.

Given a morphism $\phi\colon\G\to \R$ where $\G$ is an orbifold groupoid defining a compact orbifold, consider the $\G$-vector field $X=\nabla\phi$ given by the gradient $\nabla \phi\colon \G\to T\G$.  Then  the flow of  the $\G$-vector field $X=\nabla\phi$ gives the Morita homotopy required by the $\G$-deformation conditions.
Then an orbifold groupoid $\G$ defining a compact orbifold $\X$  is in the hypothesis of corollary \ref{LS} and the statement of the theorem follows.


\begin{thebibliography}{99}

\bibitem{A} {  A. Adem, J. Leida \and Y. Ruan},  Y.  Orbifolds and Stringy Topology. Cambridge Tracts in Mathematics, Volume 17, 2007.


\bibitem{adem} {  A. Adem \and Y. Ruan},  Twisted orbifold $K$-theory. 
{\em Comm. Math. Phys. }237 (2003), no. 3, 533--556.

\bibitem{At} {  M. F. Atiyah \and G. Segal},  On equivariant Euler characteristics. {\em Journal of Geometry and Physics}, Vol 6 (1989) , n4, 671--677

\bibitem{Ba} {  J. Baez \and J. Dolan},
From finite sets to Feynman diagrams. Mathematics unlimited---2001 and beyond, 29--50, Springer, Berlin, 2001. 


\bibitem{Be} {  J. Benabou}, Introduction to bicategories. In Midwest Category Seminar, volume 42 of LNM, pages 1--77. Springer, 1967.

\bibitem{Bri}
{  M. Bridson \and A. Haefliger}, Metric spaces of non-positive curvature. 
Grundlehren der Mathematischen Wissenschaften, 319. Springer-Verlag, Berlin, 1999.


\bibitem{Brown} {   R. Brown},  Topology and groupoids, BookSurge LLC, S. Carolina, 2006.


\bibitem{survey} 
{  H. Colman},
 LS-categories for foliated manifolds.
 {\em Proceedings of the conference Foliations: Geometry and 
 dynamics } Warsaw 2000, World Scientific  Publishing, 17--28.

\bibitem{Skye} {  H. Colman},
 Lusternik-Schnirelmann category of Orbifolds, {\em Conference on Pure and Applied Topology}  Isle of Skye, Scotland, 2005.
available at http://www.abdn.ac.uk/$\sim$wpe006/conference/skye/booklet.pdf


\bibitem{GH} {  H. Colman},
  On the 1-homotopy type of Lie groupoids,  arXiv:math/0612257


\bibitem{tan}
{  H. Colman \and E. Macias},
 Tangential Lusternik-Schnirelmann category of foliations,
{\em J. London Math. Soc. }{67}  (2002), 745--756.

\bibitem{tran} {  H. Colman \and E. Macias}, The transverse
Lusternik-Schnirelmann
category of a foliated manifold, {\em Topology } 40 (2) 
(2000), 419--430.

\bibitem{Op} {  O. Cornea, G. Lupton, J. Oprea \and D. Tanr\'e},   Lusternik-Schnirelmann category, Mathematical Surveys and Monographs  103,
American Mathematical Society, 2003.

\bibitem{Monica} {  M. Clapp \and D. Puppe}, Invariants of the 
Lusternik--Schnirelmann type
and the topology of critical sets, {\em Trans. Amer. Math. Soc. }298 (1986), 603--620.

\bibitem{Wi} {  L. Dixon, J. Harvey, C. Vafa \and E. Witten}, Strings on orbifolds, I, {\em Nucl.Phys. } B261(1985), 678.

\bibitem{Fa} {  E. Fadell} The equivariant Ljusternik-Schnirelmann method 
for invariant functionals and relative cohomological index theories,
In 
M\'ethodes Topologiques en Analyse Non-Lineaire, ed. A. Granas, 
Montreal, 
1985.

\bibitem{H2}
{   A. Haefliger},  Homotopy and integrability, Manifolds (Amsterdam, 1970), Lectures Notes in Math.  197, Springer  (1971),  133--163.


\bibitem{H1}
{   A. Haefliger},  Groupo\"{\i}des d'holonomie et classifiants,  
 {\em Ast\'erisque }116 (1984), 70--97.




\bibitem{H3}
{   A. Haefliger},  Orbi-espaces, in Sur les Groupes Hyperboliques d'apr\`{e}s Mikhael Gromov, Progress in Mathematics, Birkh\"{a}user 83 (1990), 203--213.




\bibitem{Ja}  {  I.M. James}, Lusternik-Schnirelmann Category. 
Handbook of Algebraic Topology,  Elsevier Science, Amsterdam, 1995, 1293-1310.


\bibitem{JaMo}  {  I.M. James \and J.R. Morris}, Fibrewise category.
{\em Proceedings of the Royal Society of Edinburgh }119A (1991), 177--190.

\bibitem{La1} {  N. P. Landsman},  Bicategories of operator algebras and Poisson manifolds, {\em Fields Institute Communications }30 (2001) 271--286.

\bibitem{La} {   N. P. Landsman},  Quantized reduction as a tensor product. Quantization of singular symplectic quotients, Progress in Mathematics, Birkh\"{a}user 198 (2001), 137--180.


\bibitem{Tom} {  T. Leinster}, The Euler characteristic of a category, arXiv:math/0610260v1

\bibitem{Loop} {E. Lupercio \and B. Uribe},
Loop groupoids, gerbes, and twisted sectors on orbifolds, in
 Orbifolds in Mathematics and Physics,  163--184 Contemp. Math. 310, Amer. Math. Soc., Providence, RI, 2002.
  
\bibitem{Lu} {   L. Lusternik and L. Schnirelmann},  M\'ethodes 
topologiques
dans les Probl\`emes Variationnels,  Hermann, Paris, 1934.


\bibitem{Mck} {  K. Mackenzie},  General Theory of Lie Groupoids and Lie Algebroids, Lecture Notes Series 213, Cambridge University Press, 2005.

\bibitem{McL} {  S. MacLane},  Categories for the Working Mathematician, Springer-Verlag, 1971.

\bibitem{Ma} {  M. Mather}, Pull-backs in homotopy theory. 
{\em Canad. J. Math. } 28 (1976), no. 2, 225--263. 

\bibitem{M1}
{  I. Moerdijk},  The classifying topos of a continuous groupoid, I, {\em Transactions of the American Mathematical Society }310 (1988), 629--66.


\bibitem{M}
{  I. Moerdijk},  Orbifolds as groupoids, in Orbifolds in mathematics and physics,  205--222, Contemp. Math. 310, Amer. Math. Soc., Providence, RI, 2002.


\bibitem{MM} {  I. Moerdijk \and J. Mr\v{c}un},  Introduction to foliations and Lie groupoids,  91. Cambridge Studies in Advanced Mathematics. Cambridge University Press,  2003.



\bibitem{MP} {  I. Moerdijk \and D. A. Pronk},  
Orbifolds, Sheaves and Groupoids, {\em K-theory }13 (1997)
3--21.

\bibitem{Mr}  {  J. Mr\v{c}un},  Stability and Invariants of
Hilsum--Skandalis Maps, Ph.D Thesis, Univ.\ of Utrecht (1996).





\bibitem{P}  {  D. A. Pronk},  
Etendues and stacks as bicategories of fractions, {\em  Compositio Mathematica }102 no. 3 (1996),  243--303.


\bibitem{S} {  I. Satake},  On a generalization of the notion of manifold, {\em Proc. Nat. Acad. Sc. }42, (1956), 359--363.


\end{thebibliography}
\end{document}